\newtheorem{thm}{Theorem}[section]
\newtheorem{prop}[thm]{Proposition}
\newtheorem{lem}[thm]{Lemma} 
\newtheorem{cor}[thm]{Corollary}
\newtheorem{ques}[thm]{Question}
\theoremstyle{definition}
\newtheorem{defn}[thm]{Definition}
\theoremstyle{remark}
\newtheorem{remk}[thm]{Remark}
\newtheorem{remks}[thm]{Remarks}
\newtheorem{exm}[thm]{Example}
\newtheorem{exms}[thm]{Examples}
\newtheorem{notat}[thm]{Notation}
\numberwithin{equation}{section}
\newcommand{\thmref}{Theorem~\ref}
\newcommand{\propref}{Proposition~\ref}
\newcommand{\corref}{Corollary~\ref}
\newcommand{\lemref}{Lemma~\ref}
\newcommand{\sC}{{\mathcal C}}
\newcommand{\sF}{{\mathcal F}}
\newcommand{\sI}{{\mathcal I}}
\newcommand{\sJ}{{\mathcal J}}
\newcommand{\sK}{{\mathcal K}}
\newcommand{\sL}{{\mathcal L}}
\newcommand{\sO}{{\mathcal O}}
\newcommand{\sR}{{\mathcal R}}
\newcommand{\sU}{{\mathcal U}}
\newcommand{\sZ}{{\mathcal Z}}
\newcommand{\A}{{\mathbb A}}
\newcommand{\F}{{\mathbb F}}
\newcommand{\G}{{\mathbb G}}
\newcommand{\N}{{\mathbb N}}
\renewcommand{\P}{{\mathbb P}}
\newcommand{\Q}{{\mathbb Q}}
\newcommand{\Z}{{\mathbb Z}}
\newcommand{\fm}{{\mathfrak m}}
\newcommand{\fp}{{\mathfrak p}}
\newcommand{\Alb}{{\rm Alb}}
\newcommand{\CH}{{\rm CH}}
\newcommand{\surj}{\twoheadrightarrow}
\newcommand{\inj}{\hookrightarrow}
\newcommand{\Pic}{{\rm Pic}}
\newcommand{\divf}{{\rm div}}
\newcommand{\Hom}{{\rm Hom}}
\newcommand{\Spec}{{\rm Spec \,}}
\newcommand{\sing}{{\rm sing}}
\newcommand{\Sch}{{\operatorname{\mathbf{Sch}}}}
\newcommand{\Sm}{{\mathbf{Sm}}}
\newcommand{\et}{{\text{\'et}}}
\newcommand{\ds}{{/\kern-3pt/}}
\newcommand{\ov}{\overline}
\renewcommand{\dim}{\text{\rm dim}}
\newcommand{\tuborg}{\left\{\begin{array}{ll}}
\newcommand{\sluttuborg}{\end{array}\right.}
\newcommand{\wt}{\widetilde}
\newcounter{elno}
\newcounter{elno-abc}   
\newcounter{elno-abc-prime}
\begin{document}\title{Murthy's conjecture on 0-cycles}
%\title{Algebraic cycles and crystalline cohomology}
\author{Amalendu Krishna}
\address{School of Mathematics, Tata Institute of Fundamental Research,  
1 Homi Bhabha Road, Colaba, Mumbai, India}
\email{amal@math.tifr.res.in}

%\dedicatory{Dedicated to }

\keywords{Algebraic cycles, Singular varieties, Projective modules, 
$K$-theory}

\subjclass[2010]{Primary 14C25; Secondary 14C35, 19E08, 14R99}

\begin{abstract}
We show that the Levine-Weibel Chow group of 0-cycles $\CH^d(A)$ of a reduced
affine algebra $A$ 
of dimension $d \ge 2$ over an algebraically closed field is torsion-free.
Among several
applications, it implies an affirmative solution to an old conjecture of
Murthy in classical $K$-theory.
%Several consequences are deduced.
\end{abstract}
\setcounter{tocdepth}{1}
\maketitle
\tableofcontents

\section{Introduction}\label{sec:Intro}
A very classical question in the study of projective modules over
commutative Noetherian rings is to determine conditions for a finitely generated
projective module to split off a free summand of positive rank.
In geometric terms, it means to determine conditions 
for a vector bundle on an affine scheme to admit a nowhere 
vanishing section.  A well known result of Serre \cite{Serre} says that 
if a finitely generated projective module over a commutative Noetherian ring 
$A$ has rank $r > \dim(A)$, then  it splits off a free summand of positive rank.
However, this question becomes very subtle when $r \le \dim(A)$.
For affine algebras over algebraically closed fields, this question was
studied by Murthy in his seminal paper \cite{Murthy}.

\subsection{Murthy's conjecture}
Let $k$ be an algebraically closed field and let $A$ be a
reduced affine algebra of dimension $d \ge 1$.
Let $K_0(A)$ denote the Grothendieck group of finitely generated 
projective $A$-modules. A projective $A$-module $P$ of rank $d$ admits a Chern
class $c_d(P) = \stackrel{d}{\underset{i = 0}\sum}
(-1)^i [\wedge^i(P^{\vee})] \in K_0(A)$. 
If $A$ is smooth, $c_d(P)$ maps to the top Chern class of $P$
in the Chow group $\CH^d(A)$ via the 
Chern class map $c_d:K_0(A) \to \CH^d(A)$, constructed by Grothendieck.
Recall that every smooth maximal ideal of height $d$ in $A$ has a class in 
$K_0(A)$ and $F^dK_0(A)$ is the subgroup of $K_0(A)$ generated by these classes.
In order to answer the above question for projective $A$-modules of
rank $d$, Murthy posed the following in \cite[Question~2.12]{Murthy}. 

\begin{ques}$(${\rm Murthy}$)$\label{question:Murthy-Open}
Let $A$ be a reduced affine algebra of dimension $d \ge 2$ over an 
algebraically closed field $k$. Is $F^dK_0(A)$ torsion-free?
\end{ques} 

This question has acquired great significance in the study of projective modules
because assuming its positive answer, Murthy already deduced
a series of outstanding results, one of which solves the 
splitting problem in `rank = dimension' case as follows.

\begin{thm}$($\cite[Theorem~3.8]{Murthy}$)$\label{thm:Intro-Res-3*}
Assume that Question~\ref{question:Murthy-Open} has a positive solution.
Let $A$ be a reduced affine algebra of dimension $d \ge 1$ 
over an algebraically closed field $k$.
Let $P$ be a projective $A$-module of rank $d$. Then $P$ splits off 
a free summand of positive rank if and only if $c_d(P) = 0$ in $K_0(A)$.
\end{thm}

It turns out that Murthy's question is closely related to another
open question in the theory of Chow group of
0-cycles on affine schemes. In this paper, we solve this open question
and derive affirmative answer to Murthy's question as several of its 
consequences.

\subsection{Affine Roitman torsion problem}\label{sec:Approach}
Let $k$ be an algebraically closed field.
Given a quasi-projective scheme $X$ of dimension $d$ over $k$, let
$\CH^d(X)$ (see \S~\ref{sec:LWC}) denote the Levine-Weibel Chow group of 
0-cycles on $X$ \cite{LW}. This is generated by the classes of 
regular closed points on $X$ and coincides with
the classical definition of the Chow group of 0-cycles when $X$ is smooth. 
If $X = \Spec(A)$ is affine, we also write it as $\CH^d(A)$. 
The {\sl affine Roitman torsion problem} asks whether 
$\CH^d(X)$ is torsion-free when $X$ is an affine variety of
dimension $d \ge 2$ over $k$. Note that $\CH^1(X)$ need not be torsion-free
for a smooth affine curve $X$.

It is a consequence of the torsion theorems of Roitman \cite{Roitman},
Milne \cite{Milne} and Krishna-Srinivas \cite{KSri} 
that the affine Roitman torsion problem has a positive 
solution if $X$ is normal.
If $k$ has exponential characteristic $p \ge 1$, the affine Roitman torsion 
problem for singular affine varieties was affirmatively 
solved by Levine \cite{Levine-2}, modulo $p$-primary torsion.
If $X$ is a normal affine variety of dimension $d \ge 3$
in positive characteristic, affirmative
solutions were found by Srinivas \cite{Srinivas}.
The works of Levine and Srinivas predated \cite{KSri}.

In this paper, we solve the general case
of the affine Roitman torsion problem as follows (see
\thmref{thm:Tor-High-D}).

\begin{thm}\label{thm:Intro-Res-1}
For any reduced affine algebra $A$ of dimension $d \ge 2$ over an 
algebraically closed field, the Levine-Weibel Chow group of 0-cycles 
$\CH^d(A)$ is uniquely divisible.
\end{thm}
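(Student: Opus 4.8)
The statement is that $\CH^d(A)$ is simultaneously divisible and torsion-free, so the plan is to establish these two properties by largely independent arguments, and to split the torsion into its prime-to-$p$ and, when $\Char k = p > 0$, its $p$-primary part. First I would set up the geometry: choose an integral projective compactification $\ov{X}$ of $X = \Spec A$ whose reduced complement $D = \ov{X} \setminus X$ is the support of an effective Cartier divisor, arranged (after suitable blow-ups) so that $\ov{X}$ is smooth away from a closed subset contained in $D$ together with the original singular locus of $X$. The first reduction is to identify $\CH^d(A) = \CH_0(X)$ with the Chow group of $0$-cycles with modulus $\CH_0(\ov{X} \mid mD)$ in the pro-system over increasing $m$; this transports the problem to a proper variety where Albanese maps and cohomological cycle class maps become available.

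Divisibility is the easier half. Given a class and an integer $n$, I would use a Bertini-type moving lemma to represent it by a $0$-cycle supported on the smooth locus of $X$ and lying on an integral curve $C$ realized as a suitable complete intersection; the class is then pushed forward from $\CH_0$ of the normalization of a compactification of $C$, which is governed by a generalized Jacobian. Since a semi-abelian variety over an algebraically closed field is a divisible group, the class is $n$-divisible, and as this holds uniformly in $n$ one obtains divisibility of $\CH^d(A)$.

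For the torsion, the prime-to-$p$ part I would handle through the cycle class map into \'etale cohomology (with modulus) together with a Roitman-type theorem for the semi-abelian Albanese attached to $(\ov{X}, D)$: prime-to-$p$ torsion injects into the torsion of this Albanese and is controlled because the $\ell$-adic theory is well-behaved. The genuinely hard part is the $p$-primary torsion, precisely where the Albanese in positive characteristic is not understood — this is the central obstruction of the whole problem. Here I would avoid the Albanese entirely and instead reduce the general reduced $A$ to the normal case, which is already settled by the combination of Srinivas and of the author with Srinivas. Concretely, let $\tilde{A}$ be the normalization with conductor ideal $\mathfrak{c}$, giving the conductor square relating $A$, $\tilde{A}$, $A/\mathfrak{c}$ and $\tilde{A}/\mathfrak{c}$; I would realize $\CH^d$ through algebraic $K$-theory (via $F^d K_0$, equivalently $SK_0$) and control the relative term along the conductor by means of the pro-descent theorem for $K$-theory in positive characteristic stated above. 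The point is that pro-descent reduces the relative $K$-groups to the infinitesimal thickenings of the conductor subscheme, whose $p$-primary contribution is computed by the relevant cyclic / de Rham--Witt complexes and does not produce $p$-torsion in the Chow group.

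I expect the crux, and the place where essentially all the difficulty concentrates, to be exactly this last step: showing that passing from $\tilde{A}$ to $A$ across the conductor introduces no $p$-torsion. This is where the pro-descent theorem does the heavy lifting, and verifying that the relative pro-$K$-theoretic term is $p$-torsion-free — which is, conceptually, the assertion that the Bloch--Srinivas decomposition of the diagonal survives in characteristic $p$ for the Cohen--Macaulay singularities in play — is the technical heart of the argument. Everything else (the compactification, the modulus comparison, divisibility, and the prime-to-$p$ torsion) is by comparison routine, modulo the standard machinery.
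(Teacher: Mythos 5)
Your outline contains genuine gaps, and the most serious one is structural. You propose to ``realize $\CH^d$ through algebraic $K$-theory (via $F^dK_0$)'' and then run a conductor-square argument in dimension $d$. This is circular: a priori the cycle class map $\CH^d(A)\to F^dK_0(A)$ is only known to have kernel killed by $(d-1)!$ (the Chern class splits it up to that factor), so torsion-freeness of $F^dK_0(A)$ does not give torsion-freeness of $\CH^d(A)$; the isomorphism $\CH^d(A)\simeq F^dK_0(A)$ is a \emph{consequence} of the theorem, not an available tool. Moreover, the conductor/Mayer--Vietoris sequence relating $\CH^d$ of $A$, its normalization, and $SK_1$ of the (thickened) conductor locus, together with Bloch's formula $\CH^2 \simeq H^2(\sK_2)$, is only available for surfaces (Levine's theorem). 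This is exactly why the paper's proof spends all of its higher-dimensional effort on a reduction to dimension two: a torsion cycle is placed on a Cartier curve, the ambient variety is blown up at smooth points to put that curve in good position, rational curves on the exceptional divisor are added, and a Bertini argument (Altman--Kleiman, Levine) produces a complete intersection surface $Y'$ --- a blow-up of an affine surface along a finite set --- carrying the whole relation. Your proposal has no substitute for this step.

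The second gap is in what the pro-descent theorem actually delivers and where the real difficulty sits. In the paper, pro-descent (and the Geisser--Hesselholt boundedness results) is used to compare a \emph{normal projective surface with its resolution of singularities}, by showing the bi-relative $K$-groups are $p$-primary of bounded exponent; the punchline is always ``divisible $+$ bounded exponent $\Rightarrow$ zero,'' not that the relative term is $p$-torsion-free (it is in fact all $p$-torsion). Crossing the conductor from the normalization down to $A$ is handled by an entirely different input, namely the unique divisibility of $SK_1$ of reduced affine curves. The technical heart that your proposal omits is the divisibility statement $H^1(Y^N,\sK_{2})\otimes\Q/\Z=0$ for the relevant normal \emph{quasi-projective} surfaces: this forces a compactification $\ov{Y}$, the localization sequence with the boundary term $\CH^1_F(T)$, Goodman's theorem and an intersection-theoretic argument to show the boundary surjects onto $\Alb(\ov{Y})$, and the Roitman theorem for normal projective surfaces. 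Finally, your divisibility argument via ``generalized Jacobians are semi-abelian, hence divisible'' fails in characteristic $p$: a Cartier curve through a $0$-cycle on an affine variety will in general meet $X_{\rm sing}$, the relevant relative Picard groups have unipotent parts, and $\G_a(k)$ is not $p$-divisible; likewise the proposed modulus/semi-abelian Albanese treatment of prime-to-$p$ torsion presupposes precisely the characteristic-$p$ Albanese theory whose absence is the point of the problem.
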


\vskip .3cm

\subsection{Resolution of Murthy's conjecture}
\thmref{thm:Intro-Res-1} does not immediately 
resolve Murthy's conjecture.
However, one knows that there is a cycle class map
$cyc_A \colon \CH^d(A) \to K_0(A)$ (see \cite[Proposition~2.1]{LW}) and
$F^dK_0(A)$ is evidently its image. Murthy's question will therefore be
positively answered by  \thmref{thm:Intro-Res-1}
if we can show additionally that the kernel of this cycle class map
is a torsion group.  But this follows from the following 
stronger result.

\begin{thm}\label{thm:Intro-RR}
Let $A$ be a reduced affine algebra of dimension $d \ge 1$ over an 
algebraically closed field. 
Then ${\rm Ker}(cyc_A)$ is a torsion group of exponent $(d-1)!$.
\end{thm}

We give a proof of this theorem in \S~\ref{sec:ECG} (see \thmref{thm:RR}). 
A more general result, which holds over arbitrary fields,
is obtained in \cite[Theorem~7.5]{GK}.
For smooth affine
algebras, this result is classical (see \cite[\S~4.3]{Grothendieck}). 
For singular algebras, a proof is given in an old unpublished 
manuscript \cite{Levine-S} of Levine. 
The dimension two case of Levine's proof is available in
\cite{BS-*}. For normal surfaces, it is also shown in \cite{PW2}.

\vskip .3cm

Combining Theorems~\ref{thm:Intro-Res-1} and ~\ref{thm:Intro-RR}, we
conclude the following stronger form of Murthy's conjecture
(see \corref{cor:Murthy*}).

\begin{cor}\label{cor:Intro-Res-1*}
Let $A$ be a reduced affine algebra of dimension $d \ge 2$ over an 
algebraically closed field.  Then $F^dK_0(A)$ is uniquely divisible.
%In particular, Question~\ref{question:Murthy-Open}
%has a positive answer.
\end{cor}

Like the affine Roitman torsion problem, Murthy's question was also studied by
other authors in the past. For normal surfaces,
it follows from \cite{KSri} and \cite{PW2}.
Levine \cite{Levine-2} proved its 
prime-to-characteristic (in particular, the characteristic zero) part.
An affirmative answer was given by Srinivas \cite{Srinivas} for
normal affine varieties in dimension $d \ge 3$. 
For normal affine varieties in all dimensions, a positive solution was given
in \cite{KSri}. 

However, all of these results (\cite{Levine-2}, \cite{Srinivas} and
\cite{KSri})
were conditional on \thmref{thm:Intro-RR} (or the unpublished
work of Levine).
As far as we are aware, except for normal surfaces 
(and all surfaces in characteristic zero), \corref{cor:Intro-Res-1*} gives 
first unconditional solution of Murthy's conjecture in any other case
(including the case of characteristic zero or the case of normal varieties).

%\enlargethispage{25pt}

%\begin{thm}\label{thm:Intro-Res-3}
%Let $A$ be a reduced affine algebra of dimension $d \ge 2$ over an
%algebraically closed field $k$.
%Then $F^dK_0(A)$ is uniquely divisible.
%\end{thm}
%\end{comment}

\vskip .3cm

Apart from the solution to Murthy's question, \thmref{thm:Intro-Res-1} and
the method of its proof have many other remarkable consequences.
In this paper, we give the following applications to Euler class groups and 
Bloch-Srinivas conjecture.

\begin{comment}
Other applications are provided in \cite{BK}, \cite{GK}, \cite{Krishna-3},
\cite{KP} and \cite{K-Sarwar}. 
In the following paragraphs, we give a very brief description 
of each of these applications.
\end{comment}

\subsection{The Euler class group}\label{sec:ECG-Main}
If the base field $k$ is not algebraically closed, \thmref{thm:Intro-Res-3*} no 
longer holds even for smooth affine varieties, as the famous example of the
tangent bundle on the real 2-sphere shows. To remedy this, Nori defined a
finer invariant than the Chow group, namely, 
the `Euler class group' $E(A)$ of a smooth affine algebra $A$ over any
field. Later, the notion of Euler class group was defined by 
Bhatwadekar and Raja Sridharan \cite{BS-3} for any commutative Noetherian ring.
This group admits the Euler class $e(P)$ of any projective module $P$ of
rank $= \dim(A)$ with trivial determinant. 

It was shown in \cite{BS-1} and \cite{BS-3} that 
for $A$ either smooth or containing $\Q$, the vanishing of $e(P)$ is 
a necessary and sufficient condition for $P$ to split off a free summand
of positive rank. 
\thmref{thm:Intro-Res-3*} therefore suggests that the Euler class group
of \cite{BS-3} should coincide with $\CH^d(A)$ for any reduced
affine algebra $A$ over $k$, if it is algebraically closed.
As another application of \thmref{thm:Intro-Res-1}, we show in 
this paper (see \thmref{thm:EGG-Chow}) that this
is indeed the case. When $A$ is smooth, this was conjectured
in \cite[Remark~3.3]{BS-4} and proven in \cite[Corollary~4.15]{BS-1}.

\begin{thm}\label{thm:Intro-Res-4}
Let $A$ be a reduced affine algebra of dimension $d \ge 2$ over an 
algebraically closed field.
Then there is a canonical isomorphism $E(A) \xrightarrow{\simeq} \CH^d(A)$.
\end{thm}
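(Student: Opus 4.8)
The plan is to exhibit the canonical forgetful homomorphism $\theta\colon E(A)\to \CH^d(A)$ and prove it is an isomorphism, importing the full apparatus of Euler class groups of Bhatwadekar--Sridharan \cite{BS-3} and feeding in \thmref{thm:Intro-Res-3} at the one point where smoothness was previously essential. On generators $\theta$ is defined by forgetting the local orientation, $(\fm,\omega_\fm)\mapsto [\fm]$, and more generally by sending a pair $(J,\omega_J)$ to the zero cycle $\mathrm{cyc}(J)=\sum_{\fm}\mathrm{length}_{A_{\fm}}(A_{\fm}/J_{\fm})\,[\fm]$. The first task is well-definedness, i.e.\ that the defining relations of $E(A)$ die in $\CH^d(A)$. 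A relation is a pair $(J,\omega_J)$ whose orientation lifts to an honest surjection $A^d\surj J$. Since $J$ has height $d$ and is supported on the smooth (in particular Cohen--Macaulay) locus, the $d$ chosen generators form a regular sequence at each point of $V(J)$, so the Koszul complex resolves $A/J$ and yields $[A/J]=(1-1)^d\,[A]=0$ in $K_0(A)$; under the canonical isomorphism $\CH^d(A)\cong F^dK_0(A)$ this gives $\mathrm{cyc}(J)=0$, as required.

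Surjectivity of $\theta$ is immediate: $\CH^d(A)$ is generated by classes of closed points in the smooth locus, and each such $\fm$ is a regular point of height $d$ with $\fm/\fm^2$ free of rank $d$, hence carries a local orientation and lifts to a generator $(\fm,\omega_\fm)$ of $E(A)$ with $\theta(\fm,\omega_\fm)=[\fm]$. The entire content is therefore injectivity. Let $x\in\ker(\theta)$. Using the general position and addition results of Bhatwadekar--Sridharan I would first represent $x$ by a single pair $(J,\omega_J)$ with $J$ a reduced ideal of height $d$ supported on finitely many smooth points, and then realize $(J,\omega_J)$ as the Euler class $e(P)$ of a projective $A$-module $P$ of rank $d$ carrying a surjection $P\surj J$ that induces $\omega_J$. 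By the compatibility of the Euler class with the top Chern class---both being computed by the zero cycle of the section cutting out $J$---one has $c_d(P)=\theta(e(P))=\theta(x)=0$ in $\CH^d(A)$.

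Now \thmref{thm:Intro-Res-3} applies verbatim: the vanishing $c_d(P)=0$ forces $P\cong Q\oplus A$ to split off a free summand of rank one. A projective module with a free rank-one summand has vanishing Euler class (the easy direction of the Bhatwadekar--Sridharan formalism), so $x=e(P)=0$, establishing injectivity and hence the isomorphism $E(A)\xrightarrow{\simeq}\CH^d(A)$.

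The main obstacle is not any single step but the bookkeeping at the interface between the cycle-theoretic and module-theoretic sides: checking that $\theta$ genuinely kills the relations (the Koszul/complete-intersection computation, routed through $\CH^d(A)\cong F^dK_0(A)$) and that every class in $E(A)$ is the Euler class of an honest rank-$d$ projective module, so that the splitting criterion of \thmref{thm:Intro-Res-3} can be invoked. Conceptually, however, the difficulty has already been absorbed into \thmref{thm:Intro-Res-3}, and thereby into the torsion-freeness of \thmref{thm:Intro-Res-1}: once the splitting criterion is available over singular reduced $A$, the remaining argument is precisely the one Bhatwadekar--Sridharan carried out in the smooth case of \cite[Corollary~4.15]{BS-1}.
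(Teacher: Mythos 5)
Your overall strategy --- push everything onto the torsion-freeness of $\CH^d(A)\cong F^dK_0(A)$ from Theorem~\ref{thm:Intro-Res-1} --- is the right one, but there is a genuine gap at the very first step: the map $\theta$ you write down is not defined on the generators of $E(A)$. In Definition~\ref{defn:EGC-0} a generator $(J,\omega_J)$ is only required to be $\fm$-primary of height $d$ with $J/J^2$ generated by $d$ elements; the maximal ideal $\fm$ may perfectly well be a \emph{singular} point of $\Spec(A)$ (take $J$ generated by a system of parameters at a singular $\fm$), and the Levine--Weibel group $\CH^d(A)$ has no cycle class $[\fm]$ for such points, since its cycle group is free on closed points of the smooth locus only. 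Your well-definedness argument fails for the same reason: you assert that a relation $(J,\omega_J)$ with $A^d\surj J$ is ``supported on the smooth (in particular Cohen--Macaulay) locus,'' but nothing in the definition of $K(A)$ guarantees this, and at a non-Cohen--Macaulay point $d$ generators of a height-$d$ ideal need not form a regular sequence, so the Koszul complex does not resolve $A/J$ and $[A/J]$ is not even an evident class in $K_0(A)$. This is exactly what the paper spends Lemmas~\ref{lem:ECG-SW} and \ref{lem:MS-Bertini} repairing: it first proves $E(A)\cong E_0(A)$ (via Das's result, which rests on Suslin's cancellation theorem --- note that much of the Bhatwadekar--Sridharan apparatus you import wholesale is established in \cite{BS-3} only for smooth rings or rings containing $\Q$, so each ingredient must be re-justified over a general algebraically closed field), and then uses the Murthy--Swan Bertini theorem to identify $E_0(A)$ with the group $E_s(A)$ generated by \emph{smooth} maximal ideals modulo complete intersections. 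Only on $E_s(A)$ is the cycle map defined, and there the relations are killed by the elementary \cite[Lemma~2.2]{LW} rather than a Koszul computation.

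Once that reduction is in place, your injectivity step is a legitimate but less economical variant of the paper's. The paper writes a kernel class as $[J]$ with $J$ a product of distinct smooth maximal ideals, observes $[A/J]=0$ in the torsion-free group $F^dK_0(A)$, and quotes \cite[Corollary~3.4]{Murthy} to conclude $J$ is a complete intersection. Your detour through a rank-$d$ projective $P$ with $P\surj J$ and Theorem~\ref{thm:Intro-Res-3} needs two additional, uncited facts: that every such oriented pair $(J,\omega_J)$ is realized as $e(P)$ for some rank-$d$ projective $P$, and that the ``easy direction'' $e(Q\oplus A)=0$ of the Euler class formalism holds over an arbitrary algebraically closed field. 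If you repair the well-definedness gap by routing through $E_s(A)$, either injectivity argument will close the proof.
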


%\enlargethispage{25pt}

One consequence of Theorem~\ref{thm:Intro-Res-4}  is that
together with  
Murthy's Chern classes $c_d(P) \in F^dK_0(A)$ and 
\thmref{thm:Intro-RR}, it solves the problem of defining
the Euler classes of rank $d$ projective $A$-modules  
in $E(A)$ without any condition on $A$. The vanishing of these Euler classes 
determines the splitting of the projective modules, as in
\thmref{thm:Intro-Res-3*}.

M. Schlichting \cite{Schl} has recently defined Euler classes of 
projective modules of top rank over an affine algebra in a cohomology group of 
some Milnor $K$-theory 
sheaf and has proven an analogue of \thmref{thm:Intro-Res-3*} for his 
Euler classes. Using \thmref{thm:Intro-Res-1}, it is shown in 
\cite{GK} that Schlichting's cohomology group coincides with the
Levine-Weibel Chow group (see below). This was known before in
dimension two using \cite{Levine-1} and \cite{BS-*}.
However, we do not know if the two Euler classes are comparable. The reason for
the difficulty is that unlike the class $c_d(P)$, we do not know how 
to explicitly describe Schlichting's Euler class.

\vskip .3cm

%However, a good description of Schlichting's 
%cohomology group and its comparison with 
%the Chow group of 0-cycles or the Euler class group is a challenging
%open problem. 

\subsection{A singular analogue of Beilinson-Bloch conjecture}
\label{sec:BBC}
To see another important consequence of \thmref{thm:Intro-Res-4}, recall
that in the study of geometric properties of commutative algebras,
one often needs to know if a given ideal in such an algebra
is a complete intersection. In this context, the Beilinson-Bloch 
conjecture motivates the following.

\begin{ques}\label{ques:CI}
Let $k$ be either $\ov{\F}_p$ or $\ov{\Q}$ and let  
$A$ be a reduced affine algebra of dimension $d \ge 2$ over $k$.
Is every local complete intersection ideal of height $d$ in 
$A$ a complete intersection?
\end{ques}

If $k = \ov{\F}_p$ and $A$ is regular, this question has a positive
solution. 
If $k = \ov{\Q}$ and $A$ is regular of dimension $d = 2$, then the above
question is simply an algebraic formulation of a
famous conjecture and Beilinson and Bloch. This conjecture is
still open except in some very special cases.
Using \thmref{thm:Intro-Res-4}, we can solve this question in the
following cases (see \S~\ref{sec:BBC*}).

\begin{comment}
A famous conjecture and Beilinson and Bloch
asserts (in algebraic terms) that if $A$ is a regular affine algebra of 
dimension $d \ge 2$ over
$\ov{\Q}$, then every local complete intersection ideal in $A$ of height $d$
is a complete intersection. Except for the polynomial algebras, no nontrivial
example of the validity of this conjecture seems to be known at present.
In contrast, if we allow $A$ to be singular, then the following
consequence of \thmref{thm:Intro-Res-4} provides a plethora of such 
examples.
\end{comment}

%As a consequence of \thmref{thm:Intro-Res-4} and 
%\cite[Theorems~6.4.1, 6.4.2]{KSri-1}, 
%we get the following result about the local complete intersection ideals
%in a ring.

\begin{cor}\label{cor:Intro-Res-5}
Let $A$ be a reduced affine algebra of dimension $d \ge 2$ over an 
algebraically closed field $k$. Assume that one of the following holds.
\begin{enumerate}
\item
$k = \ov{\F}_p$.
\item
$A = \oplus_{i \ge 0} A_i$ is a graded $k$-algebra with 
$A_0 = k = \ov{\Q}$.
\end{enumerate}

Then every local complete intersection ideal of height $d$ in 
$A$ is a complete intersection.
\end{cor}

A weaker version of \corref{cor:Intro-Res-5} was shown in
\cite[Theorems~6.4.1, 6.4.2]{KSri-1}, namely, that every smooth maximal ideal 
of height $d$ in $A$ is a complete intersection.

\subsection{Strong Bloch-Srinivas conjecture}\label{sec:BSC}
Let $X$ be a reduced quasi-projective scheme of dimension $d$ with 
isolated Cohen-Macaulay singularities over a perfect field $k$.
Given a resolution of singularities $\pi \colon {\wt X} \to X$,
the term {\sl reduced exceptional divisor} will mean the reduced part of
the actual exceptional divisor (which may itself may not be reduced) of $\pi$.
 
Let $F^{d}K_0(X)$ denote the subgroup of the Grothendieck group of vector 
bundles $K_0(X)$, generated by the classes of smooth codimension $d$ 
points on $X$. 
For any $Z \subsetneq X$ closed, let $rZ$ denotes the $r^{\rm th}$ 
infinitesimal thickening of $Z$ in $X$. Let $F^{d}K_{0}(X, rZ)$ be the subgroup 
of the relative $K$-group $K_{0}(X, rZ)$, generated by the classes of 
smooth codimension $d$ points of $X \setminus Z$ (see \cite[\S~2]{KSri}).

The Bloch-Srinivas conjecture (originally stated only for surfaces in
\cite{Srinivas-1}) predicts that if there exists a resolution of singularities 
$\pi \colon {\wt X} \to X$ with reduced exceptional divisor 
$E \subset \wt X$, then
the pull-back map $\pi^*\colon F^dK_0(X) \to F^dK_0(\wt{X})$ induces an
isomorphism 
\begin{equation}\label{eqn:BS-form}
F^dK_0(X) \xrightarrow{\simeq} {\underset{r}\varprojlim}\
F^dK_0(\wt{X}, rE).
\end{equation}

The $d =2$ case of this conjecture was proven in \cite{KSri}.
The $d \ge 3$ case in characteristic zero was proven in
\cite{Krishna-4}. Morrow \cite{Morrow-1} has recently proven a 
version of the conjecture for non-Cohen-Macaulay singularities 
in characteristic zero.
In general, it is known that one can not replace the inverse limit in
~\eqref{eqn:BS-form} by $F^dK_0(\wt{X}, E)$. However, using the method of
the proof of \thmref{thm:Intro-Res-1}, we obtain the
following surprisingly strong version of the Bloch-Srinivas conjecture in
positive characteristic (see \S~\ref{sec:Pf-th-1.7}).

\begin{thm}\label{thm:Chowgroup}
Let $X$ be an affine or a projective variety of dimension $d \ge 2$ over an
algebraically closed field $k$ of positive characteristic
such that $X$ has only isolated Cohen-Macaulay singularities.
Suppose there is a resolution of singularities $\pi \colon {\wt X} \to X$
with the reduced exceptional divisor $E \subset \wt X$. Then 
there are isomorphisms
\[
\CH^d(X) \xrightarrow{\simeq} F^{d}K_{0}(X) \xrightarrow{\simeq} 
F^{d}K_{0}(\wt {X},E).
\]
\end{thm}	

\thmref{thm:Chowgroup} has the following consequence for the Chow
group of 0-cycles and vector bundles on the affine cone of a
non-singular closed subscheme of $\P^n_k$ (see \S~\ref{sec:SBC-**}).

\begin{cor}\label{cor:BSC-0-1}
Let $k$ be an algebraically closed field of characteristic $p > 0$.
Let $A$ be the homogeneous coordinate ring of a smooth projective
variety $Z \inj \P^n_k$ of dimension $d-1 \ge 1$. Assume that $A$
is Cohen-Macaulay. Then $\CH^{d}(A) = 0$. In particular,
every projective $A$-module of rank $d$ splits off 
a free summand of positive rank and, every local complete intersection ideal of 
height $d$ in $A$ is a complete intersection.
\end{cor}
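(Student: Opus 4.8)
The plan is to identify $X := \Spec(A)$ with the affine cone over $Z$ and to deduce the vanishing of $\CH^d(A) = \CH^d(X)$ from a relative $K$-group computation supplied by \corref{cor:BSC-0}. First I would record the geometry: $X$ has dimension $d = \dim(Z) + 1 \ge 2$, $A$ is a reduced (in fact integral) $k$-algebra, and the unique singular point of $X$ is the vertex $v$ defined by the irrelevant ideal $\fm = A_{+}$. Indeed, away from $v$ the punctured cone $X \setminus \{v\}$ is a $\G_m$-bundle over $Z = \Proj(A)$ and is therefore smooth. Since $A$ is Cohen-Macaulay by hypothesis, $X$ has an isolated Cohen-Macaulay singularity, so the hypotheses of \thmref{thm:Chowgroup} are in force as soon as a resolution is exhibited.

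For the resolution I would take $\pi \colon \wt X \to X$ with $\wt X$ the total space of the tautological line bundle $\cO_Z(-1) = \cO_{\P^n}(-1)|_Z$, equipped with its bundle projection $q \colon \wt X \to Z$ and zero section $i \colon E \hookrightarrow \wt X$, where $E \cong Z$. The morphism $\pi$ contracts $E$ to $v$, is an isomorphism over $X \setminus \{v\}$, and is proper; as $Z$ is smooth, $\wt X$ is smooth, so $\pi$ is a resolution of singularities whose exceptional divisor $E$ is reduced. (If $A$ is not projectively normal, $\pi$ factors as $\wt X \to \Spec(\ov A) \to X$, where $\ov A = \bigoplus_{m \ge 0} H^0(Z, \cO_Z(m))$ is the finite normalization; since $\ov A / A$ has finite length this is harmless, $\pi$ remaining birational with reduced exceptional divisor $E$.) Now \corref{cor:BSC-0} applies, because $k$ is algebraically closed of characteristic $p > 0$ and $X$ is affine, and it gives isomorphisms $\CH^d(A) \xrightarrow{\simeq} F^dK_0(X) \xrightarrow{\simeq} F^dK_0(\wt X, E)$.

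It then suffices to show $F^dK_0(\wt X, E) = 0$, and I would in fact prove that the entire relative group $K_0(\wt X, E)$ vanishes. Because $q$ is a vector bundle over the regular scheme $Z$ and $q \circ i = \id_Z$, homotopy invariance of $K$-theory shows that $q^* \colon K_n(Z) \to K_n(\wt X)$ is an isomorphism for every $n$, so its inverse $i^* \colon K_n(\wt X) \to K_n(E)$ is an isomorphism for every $n$ as well. Feeding this into the long exact sequence of the pair $(\wt X, E)$ forces $K_n(\wt X, E) = 0$ for all $n$, and in particular the subgroup $F^dK_0(\wt X, E)$ is zero. Combined with the previous paragraph this yields $\CH^d(A) = 0$. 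The final assertion is then immediate from \thmref{thm:Intro-Res-3}: since $A$ is reduced affine of dimension $d \ge 2$, a rank-$d$ projective module $P$ splits off a free summand of rank one precisely when $c_d(P) = 0$ in $\CH^d(A)$, and this group is now trivial.

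I expect the only delicate step to be the construction and verification of the resolution $\pi$: one must check that $E$ is reduced and, crucially, that $\wt X$ really is a line bundle over $Z$ even in the absence of projective normality, so that homotopy invariance applies verbatim. Once $\wt X$ is seen to be a vector bundle over $Z$, the $K$-theoretic vanishing is automatic, and all the substantive work has been relegated to \corref{cor:BSC-0}.
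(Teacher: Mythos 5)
Your argument is correct and is essentially the paper's own proof: both identify the blow-up of $\Spec(A)$ at the vertex with the total space of the line bundle $\cO_Z(-1)$ (the paper calls it an affine bundle of rank one with zero section equal to the reduced exceptional divisor), use homotopy invariance to conclude that $K(\wt{X},E)$ is contractible so $F^dK_0(\wt{X},E)=0$, and then invoke \corref{cor:BSC-0} together with Murthy's splitting criterion. Your extra care about projective normality is harmless but not needed, since $A$ is generated in degree one and the blow-up is the line-bundle total space regardless.
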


Note that if $A$ is normal, there is a cohomological criterion for $A$ to be 
Cohen-Macaulay (see \cite[Ex.~18.16, p. 468]{Eisenbud}).
This is given in terms of the vanishing of 
$H^i(Z, \sO_Z(n))$ for $1 \le i \le d-2$
and $n \ge 0$. In particular, $A$ is Cohen-Macaulay if it is normal and
$Z \inj \P^n_k$ is a complete intersection.

We remark that \corref{cor:BSC-0-1} is very specific to characteristic $p > 0$
as there are counter-examples in characteristic zero 
(see, for instance, \cite[Corollary~1.4]{KSri}). For a modified form 
of \corref{cor:BSC-0-1}  in characteristic zero, see 
\cite[Theorem~1.5]{Krishna-4}.

Using \cite[Theorem~1.5]{BK}, another immediate consequence of 
\thmref{thm:Chowgroup} is the following result
about the Chow group of 0-cycles with modulus introduced by Kerz-Saito \cite{KS}
(see below). This result is significant because the Chow groups with
modulus in general behave well only when considered as 
pro-abelian groups $\{\CH^d(X, mD)\}_{m \ge 1}$ (see, for example,
the main results of \cite{KS}). 

\begin{cor}\label{cor:modulus-Chow-grp}
Assuming the notations and hypotheses of \thmref{thm:Chowgroup},
the restriction map $\CH^d(\wt{X}, mE) \to \CH^d(\wt{X}, E)$ is an isomorphism 
for all $m \ge 1$. In particular, the pro-abelian group 
$\{\CH^d(\wt{X}, mE)\}_{m \ge 1}$ is constant. 
\end{cor}

\vskip .3cm

\subsection{Further applications}\label{sec:modulus}
Many other outstanding applications of the affine Roitman
torsion theorem and Murthy's conjecture have been obtained recently. 
Applications to torsion in the Chow groups with modulus 
were obtained in \cite[Theorem~1.4]{BK} and \cite{Krishna-5}.
In particular, this yields Roitman torsion theorem for the Chow group
of 0-cycles with modulus.
Bloch's formula for the Levine-Weibel Chow group and the
0-cycle group with modulus was shown in \cite{GK}.
An application of Murthy's conjecture in the identification of a 
motivic spectral sequence for the relative $K$-theory was obtained in \cite{KP}.
In a very recent work \cite{K-Sarwar}, Murthy's conjecture was used to
explicitly describe the Levine-Weibel Chow group of monoid algebras
(see \cite{CHW-1}, \cite{CHW-2}, \cite{CHW-3}, \cite{Gub-7}
and \cite{K-Sarwar-1}). In particular,
it was shown that the Levine-Weibel Chow group of an affine toric variety
vanishes.

\vskip .4cm

\subsection{Outline of the proofs}
We now give an outline of our main proofs. As we mentioned above,
for torsion prime to the characteristic of the ground field $k$,
\thmref{thm:Intro-Res-1} was proven by Levine \cite{Levine-2}
many years ago. The key point of his proof is to do the following.
\begin{enumerate}
\item
Given a reduced affine scheme $X$ of dimension $d \ge 2$ 
and an integer $n$ prime to ${\rm char}(k)$, construct a map  
$n^{-1}_X\colon \CH^d(X) \to \CH^d(X)$, which is inverse to the multiplication
by $n$ on $\CH^d(X)$.  
\end{enumerate}

The idea of this construction is to first note that any 0-cycle on $X$
lies on a `good curve' $C$ on (or a composition of monoidal transformations
of) $X$ and then use the fact that $\Pic(C)$ is $n$-divisible.  
To show that this process makes $n^{-1}_X$ well defined, one relies on
the following two facts.

%\item
%\\
\hspace*{.1cm} (2) \ The map $_{n}\Pic(C) \to \CH^d(X)$ is zero, where 
$_{n}\Pic(C)$ is the group of $n$-torsion points \\
\hspace*{1.3cm} in $\Pic(C)(k)$.

%\item

%\\
\hspace*{.1cm} (3) \ A rigidity statement, which says that if there is a 
family of  
$n$-torsion 0-cycles in \\
\hspace*{1.3cm} $\CH^d(X)$ parameterized by a smooth curve,
then this family must be constant.
%\end{enumerate}

As mentioned in (1), a fundamental fact which is critically used 
throughout the proof of each of the 
three steps above is that $\Pic(C)(k)$ is an $n$-divisible group
for any reduced curve $C$. Apart from the above steps, this fact is
used repeatedly in almost every intermediate result of \cite{Levine-2}.

One knows that in characteristic $p > 0$, the Picard group of a reduced
curve often has a non-trivial $p$-primary torsion subgroup, namely, the
unipotent part.
As a result, the above idea does not generalize to study the $p$-torsion
in $\CH^d(X)$.

The ideas and approach of our proofs are conceptually different from 
\cite{Levine-2}. Levine does not prove his final result by  
induction on $d$ (though he reduces the proofs of most of his
intermediate steps to $d =2$ case). 
In this paper, we first prove the main result for surfaces.
Here, contrary to relying on the divisibility of the unipotent part
of $\Pic(C)$, we exploit the fact that it is torsion. 
What helps us is the observation that such a torsion has a bounded exponent.
More generally, we need to show (see \S~\ref{sec:pro-desc}) that 
the bi-relative $K$-theory associated to finite and some other abstract blow-ups
in positive characteristic are torsion of bounded exponents.
The proof uses some
outstanding results of Geisser and Hesselholt \cite{GH-2} and \cite{GH-1}.

In the next step (see Sections~\ref{sec:nor-proj}  and ~\ref{sec:Non-affine}), 
we establish a divisibility property of the $SK_1$ of
certain normal surfaces. Apart from the bound on the $p$-torsion,
this also requires some vanishing results of Geisser and Levine \cite{GL}
for mod-$p$ $K$-theory of smooth schemes, 
and a precise computation of the torsion in the $SK_1$ of a curve.
A combination of these and a Mayer-Vietoris sequence
yields the proof of the torsion theorem for surfaces.

Our proof of \thmref{thm:Intro-Res-1} in higher dimension is by reduction
to the case of surfaces. This goes through several steps
(see \S~\ref{sec:High-D}) and is similar in spirit to some reduction steps
of Levine (see \cite[Lemmas~2.1, 2.3]{Levine-2}).
The basic idea is the following. 

Given a 0-cycle $\alpha$ on $X$ (assuming
$\dim(X) \ge 3$) such that $n\alpha$ is the divisor 
of a rational function on a  reduced 
`Cartier curve' (see \S~\ref{sec:Zero-C}) $C \subset X$, 
the strategy is to embed $C$ in a nice enough surface which maps to $X$.
However, we can not do this in general
due to the possibility that $C$ may have high embedding dimension. 
So we use Bloch's trick of blowing up $X$ enough number of times 
along its smooth points, lying in $C$, so that
the embedding dimension of the strict transform of $C$ becomes at most two.

At this stage, we use Bertini theorems (see \cite[Theorem~7]{KL} and
\cite[Lemma~1.3]{Levine-2})
to find a complete intersection surface $Y'$ in the blow up $X'$ which contains
the strict transform of $C$ and, which is regular over the regular locus 
of $X$. In particular, the cycle $\alpha$ lifts to a cycle $\alpha'$ on
$Y'$ such that $n\alpha'$ dies in $\CH^2(Y')$. Using the
torsion result for surfaces (here we need to prove our theorem 
for some non-affine surfaces, see 
\thmref{thm:Main-affine}), we show that $\alpha'$ dies in $\CH^2(Y')$.
In the final step, we use the push-forward
map between the Chow groups of 0-cycles to kill $\alpha$ in $\CH^d(X)$.

Applications of \thmref{thm:Intro-Res-1} (and its proof) to Murthy's question, 
the Euler class groups and
the Bloch-Srinivas conjecture in positive characteristic 
are given in Sections ~\ref{sec:ECG} and ~\ref{sec:Pf:Chow-res}.

\vskip .4cm

\subsection{Notations} Let $k$ be a field and let $\Sch_k$ denote the 
category of separated schemes of finite type over $k$. 
We shall let $\Sm_k$ denote the category
of those schemes in $\Sch_k$ which are smooth over $k$. \
For $X \in \Sch_k$, the normalization of $X_{\rm red}$ will be denoted by
$X^N$.
Given a closed immersion $Y \subset X$ in $\Sch_k$ and a positive integer
$r$, the scheme $rY \subset X$ will denote the closed subscheme of
$X$ defined by the sheaf of ideals $\sI^r_Y$, where $\sI_Y$ is
the sheaf of ideals in $\sO_X$ defining $Y$.
We shall call this the $r$-th infinitesimal thickening of $Y$ inside $X$.
We shall specify the nature of the field $k$ in each section.

For any abelian group $A$ and a prime number $l$, we shall
denote $A \otimes_{\Z} {\Q_l}/{\Z_l} = {\underset{m}\varinjlim}\ {A}/{l^m}$ by 
${A}/{l^{\infty}}$. The $l$-primary torsion subgroup of $A$ will be
denoted by ${_{l^{\infty}}A}$.

\section{Review of 0-cycles and 
some preliminary results}\label{sec:LWC}
In this section, we recall the definition of the Chow group
of 0-cycles on singular schemes from \cite{LW}. 
We prove some properties of this Chow group which will be used in this
paper. We also prove some other preliminary results we need for our
torsion theorem. Unless further assumptions are specified, 
$k$ will denote a perfect field in this section.

\subsection{0-cycles on singular schemes}\label{sec:Zero-C}
Let $X$ be a reduced quasi-projective scheme over $k$ of dimension $d \ge 1$. 
Let $X_{\rm reg}$ denote the disjoint union of the smooth loci of the 
$d$-dimensional irreducible components of $X$.
A regular (or smooth) closed point of $X$ will mean a closed point lying in
$X_{\rm reg}$.
Let $X_{\rm sing}$ denote the complement of $X_{\rm reg}$ in $X$ with the
reduced induced closed subscheme structure.
Let $Y \subsetneq X$ be a closed subset not containing any $d$-dimensional
component of $X$ such that $X_{\rm sing} \subseteq Y$. 
Let $\sZ^d(X,Y)$ be the free abelian group on closed points of $X \setminus Y$.
We shall often write $\sZ^d(X,X_{\rm sing})$ as $\sZ^d(X)$.
A (reduced) \emph{Cartier curve on $X$ relative to $Y$} is a purely 
$1$-dimensional closed subscheme $C\hookrightarrow X$ that is reduced, has 
no component contained in $Y$ and is defined by a regular sequence in $X$ at 
each point of $C\cap Y$.

Let $C$ be a Cartier curve in $X$ relative to $Y$ and let
$\{\eta_1, \cdots , \eta_r\}$ denote the set of its generic points.
Let $\sO_{C, C \cap Y}$ denote the semilocal ring of $C$ at
$(C \cap Y) \cup \{\eta_1, \cdots , \eta_r\}$.
Let $k(C)$ denote the ring of total quotients of $C$.
Notice that $\sO_{C, C \cap Y}$ and $k(C)$ coincide if $C \cap Y = \emptyset$.
Since $C$ is a reduced curve, it is Cohen-Macaulay and hence the 
canonical map
$k(C) \to \stackrel{r}{\underset{i = 1}\prod}  \sO_{C, \eta_i}$
is an isomorphism. In particular, the map 
$\theta_C\colon\sO^{\times}_{C, C \cap Y} \to  
\stackrel{r}{\underset{i = 1}\prod}  \sO^{\times}_{C, \eta_i}$ is injective. 

Given $f \in \sO^{\times}_{C, C \cap Y}$, let $\{f_i\} = \theta_C(f)$ and let
$(f_i)_{\eta_i}:= {\rm div}(f_i)$ denote the divisor of zeros and poles of $f_i$ 
on $\ov{\{\eta_i\}}$ in the sense of \cite{Fulton}. We let $(f)_C:=
\stackrel{r}{\underset{i =1}\sum} (f_i)_{\eta_i}$. As $f$ is an invertible
regular function on $C$ in a neighborhood of $C \cap Y$, 
we see that $(f)_C \in \sZ^d(X,Y)$.

Let $\sR^d(X,Y)$ denote the subgroup of $\sZ^d(X,Y)$ generated by
$(f)_C$, where $C$ is a Cartier curve on $X$ relative to $Y$ and
$f \in \sO^{\times}_{C, C \cap Y}$. The Chow group of 0-cycles on $X$ relative to
$Y$ is the quotient
\begin{equation}\label{eqn:0-cyc-1}
\CH^d(X,Y) = \frac{\sZ^d(X,Y)}{\sR^d(X,Y)}.
\end{equation}

If $X$ is a projective scheme of dimension $d$ over $k$,
the push-forward on the 0-cycles defines the degree map
${\rm deg}\colon \CH^d(X, Y) \to \Z^r$, where $r$ is the number of
irreducible components of $X$. We let $\CH^d(X,Y)_0$ denote the kernel
of this map. 

The group $\CH^d(X, X_{\rm sing})$ is denoted in short by $\CH^d(X)$, and is 
called the {\sl Chow group of 0-cycles on $X$}.
\begin{comment}
The classical Chow moving lemma for smooth varieties has the following 
version for 0-cycles on singular schemes, proven in \cite[Corollary~1.4]{ESV}.
\begin{lem}\label{lem:ESV-Cor-1.4}
Let $Y \subset X$ be a closed subscheme such that $X_{\rm sing} \subseteq Y$
and $X \setminus Y$ is dense in $X$. Then the canonical map 
$\CH^d(X,Y) \to \CH^d(X)$ is an isomorphism.
\end{lem}
\end{comment}
Bertini type theorems for singular schemes (see \cite[\S~1]{Levine-2})
can be used to prove the following expression for the elements
of $\sR^d(X,Y)$.

\begin{lem}$($\cite[Lemma~1.3]{ESV}$)$\label{lem:MLemma}
Assume that $k$ is infinite and let $X$ be a reduced 
quasi-projective scheme over $k$ of dimension $d \ge 2$. Then any element 
$\alpha \in \sR^d(X, X_{\rm sing})$ can be written as $\alpha = (f)_C$ for
a single reduced (but possibly reducible) Cartier curve $C$ on $X$.
\end{lem}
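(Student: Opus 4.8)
The plan is to begin with an arbitrary presentation of $\alpha$ as a finite sum of generators of $\sR^{d}(X,X_{\rm sing})$, say $\alpha=\sum_{j=1}^{m}(f_{j})_{C_{j}}$ with each $C_{j}$ a reduced Cartier curve relative to $X_{\rm sing}$ and $f_{j}\in\sO^{\times}_{C_{j},\,C_{j}\cap X_{\rm sing}}$, where signs have been absorbed into the $f_{j}$. The natural candidate for the single curve is the reduced union $C=C_{1}\cup\cdots\cup C_{m}$, equipped with the element $f$ whose image in $k(C)=\prod_{j}k(C_{j})$ is $(f_{1},\dots,f_{m})$. Granting that this makes sense, one has $(f)_{C}=\sum_{j}(f_{j})_{C_{j}}=\alpha$ directly from the definition of the divisor, so the entire content of the lemma is to arrange that $C$ is reduced and is a Cartier curve relative to $X_{\rm sing}$, and that $f$ genuinely defines a unit in $\sO^{\times}_{C,\,C\cap X_{\rm sing}}$.

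Next I would isolate exactly which conditions on the $C_{j}$ guarantee this. If the $C_{j}$ pairwise share no irreducible component, then $C$ is reduced, each of its components lies on a unique $C_{j}$ (so the divisor computation above is valid), and no component of $C$ lies in $X_{\rm sing}$. At a point $p\in C\cap X_{\rm sing}$ lying on only one of the curves, $C$ agrees locally with that $C_{j}$, hence is cut out by a regular sequence, and $f=f_{j}$ is a unit at $p$; such points are harmless. The obstruction is concentrated at points where two distinct curves meet on $X_{\rm sing}$: there the union need not be regularly embedded (for $d\ge 3$ an intersection of two regular sequences is not a regular sequence), and $(f_{i},f_{j})$ need not even descend to a single element of $\sO_{C,p}$. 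Thus it suffices to put the curves in general position with respect to $X_{\rm sing}$, so that they pairwise share no component and their pairwise intersections avoid $X_{\rm sing}$ altogether, each $C_{j}$ meeting $X_{\rm sing}$ transversally at smooth points of itself. This is precisely the kind of configuration supplied by the Bertini theorems for singular schemes of \cite[\S~1]{Levine-2}, using the quasi-projectivity of $X$ to cut curves out by general members of a very ample linear system passing through the finitely many relevant points.

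The step I expect to be the real obstacle is carrying out this general-position arrangement \emph{without changing the $0$-cycle} $\alpha$: one cannot move a fixed $C_{j}$, since that would alter the divisor $(f_{j})_{C_{j}}$. My plan is therefore to exploit the freedom in the \emph{choice} of the generating expression for $\alpha$ rather than to deform fixed curves. Concretely, I would show that any single generator $(f_{j})_{C_{j}}$ can be rewritten, with the same underlying $0$-cycle, as a sum of generators supported on curves that already meet $X_{\rm sing}$ and one another in the desired general position; the Bertini input is used here to produce, through the finite support of $(f_{j})_{C_{j}}$, auxiliary Cartier curves realizing this refinement. Once every generator has been so refined, the union construction of the first two paragraphs applies verbatim and yields the required single reduced Cartier curve $C$ together with the unit $f$ satisfying $(f)_{C}=\alpha$. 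The technical heart of the proof is exactly this cycle-preserving refinement near $X_{\rm sing}$, where the Bertini theorems must be applied most carefully.
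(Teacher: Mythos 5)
First, note that the paper does not actually prove this lemma: it is stated with ``cf.\ \cite[Lemma~2.1]{BS-1}'' together with the remark that the Bertini theorems of \cite[\S~1]{Levine-2} can be used to prove it, so your argument has to stand on its own. Your reduction is correct and cleanly explained: if the curves $C_j$ pairwise share no component and their pairwise intersections avoid $X_{\rm sing}$, then $C=\bigcup_j C_j$ is reduced, is locally equal to a single $C_j$ at every point of $C\cap X_{\rm sing}$ (hence is defined there by a regular sequence), the tuple $(f_1,\dots,f_m)$ defines an element of $\sO^{\times}_{C,\,C\cap X_{\rm sing}}$, and $(f)_C=\sum_j(f_j)_{C_j}=\alpha$. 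The lemma is thereby reduced to the general-position claim, exactly as you say.

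The gap is that this general-position claim is the entire content of the lemma, and the route you propose for it does not work as described. You cannot move a curve $C_j$ off the bad points, because those points lie on $C_j$ itself (they are points of $C_i\cap C_j\cap X_{\rm sing}$): any curve carrying the restriction of $f_j$ to a given component of $C_j$ must contain that component and therefore still passes through them. The alternative you sketch --- rewriting $(f_j)_{C_j}$ as a sum of generators on genuinely different curves through the finite support of the cycle --- runs into the obstruction that a prescribed degree-zero $0$-cycle on a new curve $D$ has no reason to be the divisor of a function on $D$; Bertini produces curves through finitely many prescribed smooth points, but not functions with a prescribed divisor on them. So the ``cycle-preserving refinement'' that you yourself flag as the technical heart is not only left unproved but points in a direction that appears blocked. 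The arguments of \cite[Lemma~2.1]{BS-1} and \cite[\S~1]{Levine-2} proceed differently: rather than moving the given curves, one enlarges the configuration to a well-chosen (complete-intersection) Cartier curve containing $C_1\cup\cdots\cup C_m$ and extends or modifies the functions there, the Bertini theorems being used to control the enlarged curve along $X_{\rm sing}$. Without that mechanism, or an equivalent substitute, your proof is incomplete at its decisive step.
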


\begin{lem}\label{lem:sing-curve}
Let $X$ be a reduced quasi-projective scheme over $k$ of
dimension $d$ and let 
$Y \subset X$ be a closed subscheme containing $X_{\rm sing}$ and 
not containing any component of $X$. 
Let $\iota\colon X' \inj X$ be the closed immersion of a hypersurface section
of $X$ in a projective space.
Assume that $X'$ is reduced and $X' \setminus Y$ is regular.
Then there is a push-forward map $\iota_*\colon \CH^{d-1}(X', X' \cap Y)
\to \CH^d(X,Y)$.
\end{lem}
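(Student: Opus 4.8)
The plan is to define $\iota_*$ first on the level of cycle groups and then to verify that it carries rational equivalences to rational equivalences. Since $X_{\rm sing}\subseteq Y$, every closed point of $X\setminus Y$ is a regular point of $X$, and since $C\subseteq X'$ for any curve $C$ in $X'$ we have $C\cap(X'\cap Y)=C\cap Y$ throughout. Each closed point $p$ of $X'\setminus(X'\cap Y)=X'\setminus Y$ is a closed point of $X\setminus Y$ with the same residue field, so the closed immersion $\iota$ sends the free generator $[p]$ of $\sZ^{d-1}(X',X'\cap Y)$ to the generator $[p]$ of $\sZ^{d}(X,Y)$. This gives a homomorphism $\iota_*\colon \sZ^{d-1}(X',X'\cap Y)\to \sZ^{d}(X,Y)$, and the whole task reduces to showing $\iota_*\big(\sR^{d-1}(X',X'\cap Y)\big)\subseteq \sR^{d}(X,Y)$, since that is exactly what is needed to descend $\iota_*$ to the quotient Chow groups.

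So I would take a generator $(f)_C$ of $\sR^{d-1}(X',X'\cap Y)$, where $C$ is a Cartier curve on $X'$ relative to $X'\cap Y$ and $f\in \sO^\times_{C,C\cap Y}$. The divisor $(f)_C$ is computed \emph{intrinsically} on the curve $C$, depending only on $C$ and $f$ and not on the ambient scheme; hence $\iota_*\big((f)_C\big)$ is literally the cycle $(f)_C$ regarded as an element of $\sZ^{d}(X,Y)$. Consequently it suffices to prove that $C$ is also a Cartier curve on $X$ relative to $Y$: then $(f)_C$, with the same function $f\in\sO^\times_{C,C\cap Y}$, is by definition a generator of $\sR^{d}(X,Y)$.

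Checking that $C$ is a Cartier curve on $X$ relative to $Y$ is the heart of the matter. That $C$ is reduced and purely $1$-dimensional is unchanged by the inclusion, and no component of $C$ lies in $Y$, for such a component would lie in $C\cap Y\subseteq X'\cap Y$, contradicting that $C$ is a Cartier curve on $X'$ relative to $X'\cap Y$. The remaining point, and the crux of the proof, is the regular-sequence condition at each $x\in C\cap Y$. Let $t\in\sO_{X,x}$ be the local equation cutting out the hypersurface section $X'$ in $X$ at $x$. Because $X$ is reduced and equidimensional of dimension $d$ while $X'$ has dimension $d-1$, the hypersurface defining $X'$ contains no component of $X$, so $t$ lies in no minimal prime of $\sO_{X,x}$, hence in no associated prime of the reduced ring $\sO_{X,x}$, and is therefore a nonzerodivisor with $\sO_{X',x}\cong\sO_{X,x}/(t)$. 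By hypothesis the ideal of $C$ in $\sO_{X',x}$ is generated by a regular sequence $g_1,\dots,g_m$; choosing lifts $\tilde g_1,\dots,\tilde g_m\in\sO_{X,x}$, the ideal of $C$ in $\sO_{X,x}$ equals $(t,\tilde g_1,\dots,\tilde g_m)$. Since $t$ is a nonzerodivisor and the images of $\tilde g_1,\dots,\tilde g_m$ form a regular sequence in $\sO_{X,x}/(t)=\sO_{X',x}$, the standard fact that one may prepend a nonzerodivisor to such a sequence shows that $t,\tilde g_1,\dots,\tilde g_m$ is a regular sequence in $\sO_{X,x}$. Thus $C$ is defined by a regular sequence in $X$ at $x$, so $C$ is a Cartier curve on $X$ relative to $Y$, completing the reduction.

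In summary, the only genuine obstacle is the transfer of the local complete intersection (Cartier) condition from $X'$ to $X$; once $t$ is identified as a nonzerodivisor, the prepending lemma makes this routine, and the compatibility with the intrinsically defined divisors $(f)_C$ then gives the desired inclusion $\iota_*\big(\sR^{d-1}(X',X'\cap Y)\big)\subseteq \sR^{d}(X,Y)$ and hence the push-forward on Chow groups.
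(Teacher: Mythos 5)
Your proposal is correct and follows essentially the same route as the paper: identify the cycle groups, and reduce everything to showing that a Cartier curve on $X'$ relative to $X'\cap Y$ is a Cartier curve on $X$ relative to $Y$, which the paper disposes of in one line ("since $X'\subset X$ is a complete intersection") and which you justify in detail by observing that the local equation $t$ of the hypersurface section is a nonzerodivisor on the reduced equidimensional $\sO_{X,x}$ and can be prepended to a regular sequence defining $C$ in $\sO_{X',x}$. Your write-up simply makes explicit the standard commutative-algebra step the paper leaves implicit.
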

\begin{proof}
It is clear from our assumption that there is an inclusion
$\iota_*\colon \sZ^{d-1}(X',Y') \subset \sZ^d(X,Y)$, where $Y'= X' \cap Y$.
Let $C \inj X'$ be a reduced Cartier curve relative to $Y'$ and let
$(f)_C \in \sR^{d-1}(X',Y')$. Since $X' \subset X$ is a complete intersection,
it follows that $C \inj X$ is also a Cartier curve in $X$ relative to $Y$ 
and clearly $(f)_C \in \sR^d(X,Y)$. This finishes the proof.
\end{proof}

\begin{lem}\label{lem:PF}
Let $X$ be a reduced quasi-projective scheme over $k$ of
dimension $d$.
Let $\iota\colon X' \inj X$ be the closed 
immersion of a hypersurface section of $X$  in a projective space.
Assume that $X'$ is reduced and $X' \setminus X_{\rm sing}$ 
is regular.
Assume further that there is a proper map $\pi\colon X \to Z$ obtained
by a composition of blow ups at a finite collection of smooth closed 
points of a reduced quasi-projective scheme $Z$ over $k$. 
Then there is a push-forward map
$(\pi \circ \iota)_*\colon \CH^{d-1}(X', X' \cap X_{\rm sing}) \to \CH^d(Z)$.
\end{lem}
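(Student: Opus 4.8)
The plan is to factor the desired map as $(\pi\circ\iota)_* = \pi_* \circ \iota_*$, where $\iota_*\colon \CH^{d-1}(X',X'\cap X_{\rm sing}) \to \CH^d(X)$ is the push-forward already produced by \lemref{lem:sing-curve} (applied with $Y = X_{\rm sing}$, and using $\CH^d(X,X_{\rm sing}) = \CH^d(X)$), and $\pi_*\colon \CH^d(X) \to \CH^d(Z)$ is a push-forward along the blow-up that I now construct; the essential content lies entirely in building $\pi_*$. First I would record the geometry of $\pi$. Writing $S = \{z_1,\dots,z_m\} \subset Z_{\rm reg}$ for the finite set of smooth points blown up (note $\dim Z = d$, as blowing up points preserves dimension, so $\CH^d(Z)$ is defined), $\pi$ restricts to an isomorphism $\pi^{-1}(Z\setminus S) \xrightarrow{\simeq} Z\setminus S$, the exceptional fibre over each $z_i$ is $E_i \cong \P^{d-1}_{k(z_i)}$, and $X$ is regular in a neighbourhood of each $E_i$ (blowing up a regular point of a $d$-dimensional scheme yields a scheme regular along the exceptional divisor). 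Consequently $\pi$ carries $X_{\rm sing}$ isomorphically onto $Z_{\rm sing}$ and $S \cap Z_{\rm sing} = \emptyset$: all the singular data of $X$ and $Z$ is identified by $\pi$ away from the exceptional locus.

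Next I would define $\pi_*$ on cycles. For a closed point $x \in X_{\rm reg}$, the image $\pi(x)$ lies in $Z_{\rm reg}$ (either $x$ lies over $Z\setminus S$, where $\pi$ is an isomorphism, or $x \in E_i$ and $\pi(x)=z_i$ is smooth), so setting $\pi_*(x) = [k(x):k(\pi(x))]\cdot \pi(x)$ gives a homomorphism $\pi_*\colon \sZ^d(X) \to \sZ^d(Z)$, the usual proper push-forward of $0$-cycles. The main step is to show $\pi_*(\sR^d(X)) \subseteq \sR^d(Z)$. Let $C$ be a reduced Cartier curve on $X$ relative to $X_{\rm sing}$ with irreducible components $C_1,\dots,C_r$, let $f \in \sO^{\times}_{C,C\cap X_{\rm sing}}$, and let $f_j$ denote the components of $\theta_C(f)$. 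I would treat the $C_j$ according to their image. If $C_j \subseteq E_i$ for some $i$, then $C_j$ is a reduced projective curve over $k$ mapping to the point $z_i$; since $f$ is constrained only along $X_{\rm sing}$ and $C_j \cap X_{\rm sing} = \emptyset$, the function $f_j$ is a global element of $k(C_j)^{\times}$, and the proper push-forward of the principal divisor $(f_j)_{\eta_j}$ to $z_i$ vanishes because a rational function on a proper curve has degree-zero divisor. If instead $C_j \not\subseteq \bigcup_i E_i$, then $\pi|_{C_j}$ is proper and quasi-finite, hence finite, and birational onto its reduced image $D_j := \pi(C_j)$, an integral curve in $Z$ whose generic point lies in $Z_{\rm reg}$; under the identification $k(C_j) = k(D_j)$ the function $f_j$ becomes a rational function $g_j$ on $D_j$, and the compatibility of proper push-forward with divisors of functions on curves gives $\pi_*\big((f_j)_{\eta_j}\big) = (g_j)_{D_j}$.

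Finally I would assemble these contributions. Let $D$ be the reduced curve $\bigcup_{C_j \not\subseteq \cup E_i} D_j$. Because $\pi$ is an isomorphism near the singular loci and carries $C$ onto $D$ there, the curve $D$ is reduced, has no component inside $Z_{\rm sing}$, and is cut out by a regular sequence at each point of $D \cap Z_{\rm sing}$ (the defining regular sequence for $C$ transports through the local isomorphism); hence $D$ is a Cartier curve on $Z$ relative to $Z_{\rm sing}$. The same local isomorphism shows the $g_j$ glue to an element $g \in \sO^{\times}_{D,D\cap Z_{\rm sing}}$, and since $f$ is a unit along $C \cap X_{\rm sing}$ the cycle $\pi_*\big((f)_C\big)$ is supported on $Z_{\rm reg}$; combining the two cases above yields $\pi_*\big((f)_C\big) = (g)_D \in \sR^d(Z)$. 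Thus $\pi_*$ descends to $\CH^d(X) \to \CH^d(Z)$, and composing with $\iota_*$ produces the asserted map $(\pi\circ\iota)_*$.

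I expect the delicate point to be the verification that $D$ genuinely is a Cartier curve relative to $Z_{\rm sing}$ — that is, that the regular-sequence condition satisfied by $C$ along $X_{\rm sing}$ really does transfer to $D$ along $Z_{\rm sing}$ — together with the bookkeeping that the components of $C$ lying inside the exceptional divisors contribute nothing. Both rest on the single structural fact that $\pi$ is an isomorphism over the complement of the smooth points $S$, a locus that contains both $X_{\rm sing}$ and $Z_{\rm sing}$; once this is pinned down, the remaining identities are the standard proper-push-forward compatibilities for $0$-cycles and for divisors on curves.
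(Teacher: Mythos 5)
Your proof is correct and follows the same factorization $(\pi\circ\iota)_* = \pi_*\circ\iota_*$ as the paper, with $\iota_*$ supplied by \lemref{lem:sing-curve}. The only difference is that the paper does not construct $\pi_*$ by hand: it simply cites \cite[Corollary~2.7]{ESV}, where the push-forward $\pi_*:\CH^d(X)\to\CH^d(Z)$ for a blow-up at smooth closed points is shown to exist and, moreover, to be an isomorphism; your explicit verification (contracted components of a Cartier curve contribute degree-zero divisors, non-contracted ones map finitely and birationally, and the Cartier condition transports through the isomorphism over $Z\setminus S$) is essentially a proof of the existence half of that cited result.
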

\begin{proof}
It is known that there is a push-forward map $\pi_*\colon \CH^d(X) \to
\CH^d(Z)$ which is an isomorphism (see \cite[Corollary~2.7]{ESV}).
The lemma follows by combining this with the map
$\iota_*\colon \CH^{d-1}(X', X' \cap Y) 
\to \CH^{d}(X)$ from \lemref{lem:sing-curve}.
\end{proof}

Recall from \cite[Proposition~2.1]{LW} that every regular closed point
$x \in X$ defines a class $[k(x)] \in K_0(X)$ and this yields a cycle
class map 
\begin{equation}\label{eqn:C-class}
cyc_X\colon \CH^d(X, Y) \to K_0(X). 
\end{equation}
It follows from \cite[Corollary~1.4]{ESV} that the image of this cycle class 
map does not depend on $Y$. This image is classically denoted by $F^dK_0(X)$.

For a Noetherian scheme $X$, we let $K(X)$ denote the Thomason-Trobaugh
non-connective $K$-theory spectrum of $X$. 
Recall that the connective cover of $K(X)$ coincides with the 
Quillen $K$-theory spectrum of $X$ if it is quasi-projective over $k$. 
We let $K(X; {\Z}/m)$ be the $K$-theory spectrum with coefficients ${\Z}/m$.
We let $\{K_n(X)\}_{n \in \Z}$ denote the homotopy groups (also called the
$K$-groups of $X$) of $K(X)$. From a prime number $l$,
we shall write ${\underset{m}\varinjlim} \ 
K_n(X; {\Z}/{l^m})$ by $K_n(X; {\Z}/{l^{\infty}})$.
For $n \in \Z$, we shall 
let $\sK_{n, X}$ denote the Zariski sheaf on $X$ associated
to the presheaf $U \mapsto K_n(U)$.

\subsection{Some preliminary results}\label{sec:Prelim}
For $X \in \Sch_k$, let $\sZ^F_q(X)$ denote the free abelian group of
$q$-dimensional algebraic cycles on $X$ in the sense of 
\cite[Chapter~1]{Fulton}. Let $\CH^F_{q}(X) := {\sZ^F_q(X)}/{\sR^F_q(X)}$ 
denote the associated
(homological) Chow group of $X$ modulo rational equivalence.
If $X$ is equi-dimensional of dimension $d$, we let $\CH^q_F(X)
= \CH^F_{d-q}(X)$. 
If $X$ is connected and projective over $k$, the push-forward via the
structure map $X \to \Spec(k)$ gives rise to the degree map
${\rm deg}\colon \CH^F_0(X) \to \Z$. We let ${\CH^F_0(X)}_0$ denote the kernel of
this map. In general, we define  ${\CH^F_0(X)}_0$ by taking the direct sum
over the connected components.
We prove some preliminary results in this subsection for later applications.

\begin{lem}\label{lem:Ncd}
Let $X$ be a normal quasi-projective surface over $k$ and let
$Y \subset X$ be a strict normal crossing divisor such that
$Y \cap X_{\rm sing} = \emptyset$. Then
there is a canonical isomorphism $H^2_Y(X, \sK_2) \simeq \CH^1_F(Y)$.
\end{lem}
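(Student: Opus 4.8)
The plan is to compute the local cohomology $H^\bullet_Y(X,\sK_2)$ by means of the Gersten (Cousin) resolution of the sheaf $\sK_2$ and to read off the answer from its restriction to the cycles supported on $Y$. First I would invoke excision for cohomology with supports: since $H^\bullet_Y(X,\sF)$ depends only on the restriction of $\sF$ to an open neighbourhood of $Y$, and since the hypothesis $Y \cap X_{\rm sing} = \emptyset$ forces $Y \subset X_{\rm reg}$, I may replace $X$ by its regular locus and thereby assume $X$ is a regular (indeed smooth, for $k$ perfect) surface. On such an $X$ the Gersten conjecture holds (Quillen), so $\sK_2$ admits the flasque resolution
\begin{equation*}
0 \to \sK_2 \to (i_\eta)_* K_2(k(X)) \to \bigoplus_{x \in X^{(1)}} (i_x)_* k(x)^\times \xrightarrow{\divf} \bigoplus_{x \in X^{(2)}} (i_x)_* \Z \to 0,
\end{equation*}
where $X^{(p)}$ denotes the set of codimension-$p$ points and $\eta$ the generic point. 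As each term is flasque, hence $\Gamma_Y$-acyclic, one has $H^q_Y(X,\sK_2) = H^q(\Gamma_Y(X,G^\bullet))$, where $G^\bullet$ is this resolution.

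Next I would identify $\Gamma_Y$ applied to each term. For the flasque sheaf $(i_x)_*A$, the subgroup of global sections supported on $Y$ is $A$ if $x \in Y$ (equivalently $\ov{\{x\}} \subseteq Y$, as $Y$ is closed) and $0$ otherwise. The generic point $\eta$ contributes nothing, since $Y$ contains no component of $X$. In codimension $1$ the surviving summands are indexed by the generic points of the components of $Y$, giving $\bigoplus_{y \in Y^{(0)}} k(y)^\times$, and in codimension $2$ by the closed points lying on $Y$, giving the group $\sZ_0(Y)$ of $0$-cycles on $Y$. Hence $\Gamma_Y(X,G^\bullet)$ reduces to the two-term complex
\begin{equation*}
\bigoplus_{y \in Y^{(0)}} k(y)^\times \xrightarrow{\divf} \sZ_0(Y)
\end{equation*}
placed in degrees $1$ and $2$, so that $H^2_Y(X,\sK_2)$ is exactly the cokernel of this map.

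Finally I would identify this cokernel with $\CH^1_F(Y)$. Restricted to the generic point $y$ of a component $C = \ov{\{y\}}$ of $Y$, the Gersten differential sends $f \in k(y)^\times$ to $\sum_{z}\operatorname{ord}_z(f)[z]$, summed over the closed points $z$ of $C$; since each component of a strict normal crossing divisor is a regular curve, these orders are the valuations of the regular local rings, so the image is precisely Fulton's divisor of $f$ on $C$. Therefore the two-term complex above is the complex whose cokernel defines the codimension-$1$ (equivalently, $0$-dimensional) Chow group $\CH^1_F(Y)$ of the curve $Y$, and the induced isomorphism $H^2_Y(X,\sK_2) \xrightarrow{\simeq} \CH^1_F(Y)$ is canonical because both the Gersten resolution and Fulton's presentation are. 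The main point to verify carefully is this last matching of differentials — in particular that the codimension-$2$ contributions at the crossing points of $Y$ correspond to the $0$-cycles of $Y$ with the right multiplicities, and that the valuations appearing in the Gersten complex on $X$ coincide with those on the components of $Y$ — while the excision reduction and the flasque acyclicity are routine once the Gersten resolution is in hand.
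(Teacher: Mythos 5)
Your proposal is correct and follows essentially the same route as the paper: excision with supports to reduce to the smooth locus, then the Gersten resolution of $\sK_2$, whose flasque terms compute $H^\bullet_Y$ and whose degree-$1$ and degree-$2$ contributions supported on $Y$ give exactly Fulton's presentation of $\CH^1_F(Y)$. The only cosmetic difference is that the paper splits the resolution into two short exact sequences via the cokernel sheaf $\sF = \mathrm{Coker}(\epsilon_X)$ and dimension-shifts, whereas you apply $\Gamma_Y$ to the whole flasque resolution at once; the computation is the same.
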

\begin{proof}
Let $U = X \setminus X_{\rm sing}$ denote the regular locus of $X$.
Then $U$ is an open subset of $X$ which contains the closed subset $Y$.
It follows therefore from excision for the Zariski cohomology with support 
(see \cite[Chap.~III, Exercise~2.3(f)]{Hartshorne-2}) 
that the map $H^2_Y(X, \sK_2) \to H^2_Y(U, \sK_2)$ is an isomorphism. 
We can thus assume that $X$ is smooth. 

We consider the Gersten resolution
\begin{equation}\label{eqn:Ncd-0}
0 \to \sK_{2,X} \xrightarrow{\epsilon_X} i_{k(X), *}(K_2(k(X))) \to 
{\underset{z \in X^{(1)}}\oplus} i_{z,*} (K_1(k(z))) \to
{\underset{x \in X^{(2)}}\oplus} i_{x,*} (K_0(k(x))) \to 0.
\end{equation}
 
Setting $\sF = {\rm Coker}(\epsilon_X)$, we get an exact sequence
of Zariski sheaves
\[
0 \to \sF \to {\underset{z \in X^{(1)}}\oplus} i_{z,*} (K_1(k(z))) \to
{\underset{x \in X^{(2)}}\oplus} i_{x,*} (K_0(k(x))) \to 0.
\]

Since the above sequences are the flasque resolutions of
$\sK_{2,X}$ and $\sF$, respectively, it follows that the map
$H^1_Y(X, \sF) \to H^2_Y(X, \sK_{2,X})$ is an isomorphism
and there is an exact sequence
\[
H^0_Y(X, {\underset{z \in X^{(1)}}\oplus} i_{z,*} (K_1(k(z)))) \to
H^0_Y(X, {\underset{x \in X^{(2)}}\oplus} i_{x,*} (K_0(k(x)))) \to
H^1_Y(X, \sF) \to 0.
\]
Equivalently, we have an exact sequence
\begin{equation}\label{eqn:Ncd-1}
{\underset{Z}\oplus}  \ K_1(k(Z)) \xrightarrow{\rm div}  
{\underset{x \in Y^{(1)}}\oplus} K_0(k(x)) \to H^1_Y(X, \sF) \to 0,
\end{equation}
where $Z$ runs through all 1-dimensional irreducible components of $Y$.
It is well known that the first arrow from the left takes a
rational function on $Y$ to its divisor.
Hence, it is clear from the definition of $\CH^1_F(Y)$ that
this exact sequence is equivalent to an isomorphism
$\CH^1_F(Y) \xrightarrow{\simeq} H^1_Y(X, \sF)$.
Combining this with $H^1_Y(X, \sF) \xrightarrow{\simeq} H^2_Y(X, \sK_{2,X})$,
we get the desired isomorphism $\CH^1_F(Y) \xrightarrow{\simeq} 
H^2_Y(X, \sK_{2,X})$. 
\end{proof}

\begin{comment}
\begin{lem}\label{lem:curve-SK}
Let $Y$ be an (possibly non-reduced) affine curve over an algebraically 
closed field $k$. Then $SK_1(Y)$ is uniquely divisible.
\end{lem}
\begin{proof}
We can assume that $Y$ is connected.
It follows from \cite[Theorem~5.3]{BPW} that 
$SK_1(Y) = V \oplus P$, where $V$ is uniquely divisible and
$P$ is either zero (if ${\rm char}(k) = 0$) or a $p$-group of
bounded exponent (if ${\rm char}(k)  = p > 0$).
We can thus assume that ${\rm char}(k)  = p > 0$.

It follows from \cite[Theorem~3]{GR} that the cup product map
$\Pic(Y) \otimes_{\Z} k^{\times} \to SK_1(Y)$ is surjective.
Since $k^{\times}$ is divisible, so is $\Pic(Y) \otimes_{\Z} k^{\times}$.
In particular, its quotient $SK_1 \simeq V \oplus P$ is also divisible.
This in turn implies that $P$ is divisible.
Since it is also of bounded exponent, it must be zero.
This yields an isomorphism $SK_1 \simeq V$, which is uniquely divisible. 
\end{proof}
\end{comment}

\begin{lem}\label{lem:tor-groups}
Let $f \colon A \to B$ be a surjective morphism of smooth connected
commutative algebraic groups over $k$. Then the induced map
$f \colon A(\ov{k}) \to B(\ov{k})$ is surjective on the 
torsion subgroups.
\end{lem}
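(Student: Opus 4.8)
The plan is to show that if $f:A\to B$ is a surjection of smooth commutative algebraic groups over $k$, then for each torsion element $b\in B(k)$ one can find a torsion preimage. The key structural input is that the kernel $N=\Ker(f)$ is itself a smooth commutative algebraic group, so we have a short exact sequence $1\to N\to A\xrightarrow{f} B\to 1$, and the fibers of $f$ are torsors under $N$. First I would reduce to the case of $n$-torsion for a fixed integer $n$: if $b\in B(k)$ satisfies $nb=0$, it suffices to produce $a\in A(k)$ with $f(a)=b$ and $na=0$.

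The main step is to exploit divisibility of $A$ as an algebraic group. Since $A$ is a smooth commutative algebraic group over the algebraically closed field $k$, multiplication by any integer $n$ prime to $\Char(k)$ is surjective on $k$-points, with kernel the finite group $A[n]$; more care is needed when $n$ is divisible by $p=\Char(k)$. The cleanest approach is to consider the commutative diagram with rows given by the sequence above and columns given by multiplication-by-$n$:
\begin{equation*}
\xymatrix{
1 \ar[r] & N \ar[r] \ar[d]^{n} & A \ar[r]^{f} \ar[d]^{n} & B \ar[r] \ar[d]^{n} & 1 \\
1 \ar[r] & N \ar[r] & A \ar[r]^{f} & B \ar[r] & 1.
}
\end{equation*}
Applying the snake lemma to the induced maps on $k$-points yields an exact sequence connecting the kernels $A[n]$, $B[n]$ and $N[n]$ with the cokernels of multiplication-by-$n$. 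The crucial observation is that because $A$ is a smooth \emph{connected} group one can arrange multiplication-by-$n$ to be surjective on $A(k)$; then the snake lemma forces $f:A[n]\to B[n]$ to be surjective, which is exactly the desired statement on $n$-torsion.

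The hard part will be handling the component groups and, in positive characteristic, the behavior of multiplication-by-$p$. A smooth commutative algebraic group need not be connected, so I would first treat the identity component $A^\circ$ and the component group $\pi_0(A)$ separately, or reduce to the connected case by noting that $\pi_0$ is finite and that torsion lifting for finite groups is immediate. In characteristic $p$, multiplication-by-$p$ may fail to be surjective (e.g.\ it can have a nontrivial cokernel coming from unipotent or infinitesimal parts), so I would decompose $A$ using the structure theory of commutative algebraic groups, or instead argue by separately lifting the prime-to-$p$ torsion (where multiplication by $n$ is étale-surjective on $k$-points since $k=\ov k$) and the $p$-primary torsion. I expect the $p$-primary case to be the genuine obstacle, requiring either the divisibility of $A(k)$ as an abstract group for $k$ algebraically closed or an explicit appeal to the surjectivity of $f$ combined with smoothness of the fibers to lift a chosen torsion element while killing its order.
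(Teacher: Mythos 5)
There is a genuine gap in the central step. You reduce to showing that $A[n]\to B[n]$ is surjective for each fixed $n$, and then claim that surjectivity of multiplication-by-$n$ on $A(k)$ together with the snake lemma forces this. The snake lemma actually gives
\[
0 \to N[n] \to A[n] \to B[n] \xrightarrow{\ \delta\ } N/nN \to A/nA \to B/nB \to 0,
\]
so surjectivity of $A[n]\to B[n]$ is equivalent to the vanishing of the connecting map $\delta$, i.e.\ it follows from $N/nN=0$, \emph{not} from $A/nA=0$. Indeed, if $A/nA=0$ then exactness at $N/nN$ says $\delta$ is \emph{onto} $N/nN$, which is the opposite of what you need. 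Moreover the statement you are trying to prove is simply false: take $f:\G_m\to\G_m$ the squaring map over $k=\ov{k}$ with ${\rm char}(k)\neq 2$, so $N=\mu_2$. Then $A[2]=B[2]=\{\pm1\}$ but $f(A[2])=\{1\}$, so $A[2]\to B[2]$ is not surjective, even though $A_{\rm tors}\to B_{\rm tors}$ is (the element $-1\in B[2]$ lifts to a primitive fourth root of unity, which is torsion of order $4$, not $2$). So the reduction to $n$-torsion is not a reduction but a strictly stronger, false claim, and the snake-lemma deduction supporting it is backwards.

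The correct criterion, obtained by passing to the colimit of the snake-lemma sequences over $n$ (ordered by divisibility), is that $f$ is surjective on torsion provided $N\otimes_{\Z}\Q/\Z=\varinjlim_n N/nN$ vanishes. This is exactly what the paper verifies, piece by piece, via the structure theory you allude to at the end: for an extension of a finite group by an abelian variety, for a diagonalizable group (torus times finite of order prime to $p$), and for a unipotent group ($k$-vector space in characteristic $0$, bounded-exponent $p$-torsion in characteristic $p$), one has $G\otimes\Q/\Z=0$ in each case; a diagram chase through the linear/abelian-variety decomposition then finishes the argument. Your closing remarks point in this direction, but as written the proof does not go through: you would need to replace the claim ``$nA=A$ implies $A[n]\surj B[n]$'' with the vanishing of $N\otimes\Q/\Z$ for the relevant kernels, and carry out the structure-theoretic case analysis rather than leaving it as an expected obstacle.
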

\begin{proof}
If $A$ and $B$ are abelian varieties, then we can write ${\rm Ker}(f)$
as an extension of a finite abelian group by an abelian variety.
In particular, ${\rm Ker}(f) \otimes_{\Z} {\Q}/{\Z} = 0$.
But this easily implies that $f\colon A_{\rm tors} \to B_{\rm tors}$ is surjective.

In the general case, the structure theorem for smooth connected
commutative algebraic groups asserts that $A$ has a largest connected linear 
subgroup $A_1$ and the quotient $A_2 = A/{A_1}$ is an abelian variety. 
Moreover, $A_1$ is canonically the direct product of a torus and a smooth 
connected unipotent group (see \cite[Theorem~2.3.3]{Tits}). 
Let $B_1$ be the largest connected linear 
subgroup of $B$ such that the quotient $B_2 = B/{B_1}$ is an abelian variety. 
We have a commutative diagram of short exact sequences
\begin{equation}\label{eqn:tor-groups-00}
\xymatrix{
0 \ar[r] & A_1 \ar[r] \ar[d] & A \ar[r] \ar[d] & A_2 \ar[r] \ar[d] & 0 \\
0 \ar[r] & B_1 \ar[r] & B \ar[r] & B_2 \ar[r] & 0.
}
\end{equation}
Moreover, the corresponding maps on the linear and the abelian variety parts 
are also surjective.
We have seen above that the map $A_2 \to B_2$ is surjective on the torsion
subgroups. 

We write $A_1 = D_1 \times U_1$, $B_1 = D'_1 \times U'_1$, where 
$D_1$ (resp. $D'_1$) and $U_1$ (resp. $U'_1$) are the unique torus
and unipotent parts of $A_1$ (resp. $B_1$).
Since there are no nontrivial homomorphisms 
from a diagonalizable to a unipotent 
group and vice-versa (see \cite[\S~2.3.2]{Tits}), 
we get $ D_1 \surj D'_1$ and $U_1 \surj U'_1$.
If ${\rm char}(k) = 0$, then $U_1$ and $U'_1$ are both $k$-vector spaces by 
\cite[\S~3.1]{Tits} and hence uniquely divisible. 
If ${\rm char}(k) = p > 0$, then $U_1$ and $U'_1$ are both $p$-groups of
bounded exponents by \cite[\S~3.2.1]{Tits}. 
In particular, both groups are already torsion
and hence 
\begin{equation}\label{eqn:unip}
U_1 \otimes {\Q}/{\Z} = U'_1 \otimes {\Q}/{\Z} = 0.
\end{equation}

Next we note that the kernel of the map $D_1 \surj D_2$ is again a 
diagonalizable closed subgroup
and by \cite[Corollary~1.2.6]{Tits}, any diagonalizable group $D$ is a direct 
product of a torus
and a finite abelian group of order prime to $p$ (if ${\rm char}(k) = p > 0$).
In particular, 
\begin{equation}\label{eqn:diag}
D \otimes {\Q}/{\Z} = 0.
\end{equation}
It follows from ~\eqref{eqn:unip} and ~\eqref{eqn:diag} that 
$A_1 \to B_1$ is surjective on the torsion subgroups. They also imply that
the maps $A \to A_2$ and $B \to B_2$ are surjective on the torsion subgroups. 
A simple diagram chase in ~\eqref{eqn:tor-groups-00} now shows that the 
middle vertical map is surjective on the torsion subgroups.
\end{proof}

\begin{remk}\label{remk:Brion*} 
The assertion of \lemref{lem:tor-groups} holds even if one of 
$A$ and $B$ is not necessarily connected. This is a consequence 
of \cite[Theorem~1.1]{Brion}. But we do not need this general case in this
paper.
\end{remk}

\begin{lem}\label{lem:Blow-up-fin}
Let $X \in \Sch_k$ and let $Y \subset X$ be a proper closed subscheme.
Let $f\colon X' \to X$ be a finite morphism and let $Y' = Y \times_X X'$.
Then the induced map $\wt{f}\colon {\rm Bl}_{Y'}(X') \to {\rm Bl}_Y(X)$ is
also finite.
\end{lem}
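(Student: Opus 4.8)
The plan is to realize $\wt{f}$ as the composite of a closed immersion followed by a finite morphism, using the description of blow-ups as $\Proj$ of Rees algebras. Write $\sI \subset \sO_X$ for the sheaf of ideals defining $Y$, so that ${\rm Bl}_Y(X) = \Proj_X(\sR)$ with $\sR = \bigoplus_{n \ge 0} \sI^n$. By definition of the scheme-theoretic inverse image, $Y' = Y \times_X X'$ is defined by the ideal sheaf $\sI' = \sI \sO_{X'}$, so that ${\rm Bl}_{Y'}(X') = \Proj_{X'}(\sR')$ with $\sR' = \bigoplus_{n \ge 0} (\sI')^n$.

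First I would produce the map $\wt{f}$ from the universal property of blow-ups. Let $\rho: {\rm Bl}_{Y'}(X') \to X'$ be the structure map. The inverse image of $\sI$ under $f \circ \rho$ is $(f \circ \rho)^{-1}\sI \cdot \sO_{{\rm Bl}_{Y'}(X')} = \rho^{-1}(\sI\sO_{X'}) \cdot \sO_{{\rm Bl}_{Y'}(X')} = \sI' \sO_{{\rm Bl}_{Y'}(X')}$, which is invertible by the defining property of the blow-up ${\rm Bl}_{Y'}(X')$. By the universal property of ${\rm Bl}_Y(X) \to X$, the morphism $f \circ \rho$ then factors uniquely through a map $\wt{f}: {\rm Bl}_{Y'}(X') \to {\rm Bl}_Y(X)$ lying over $f$.

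Next I would factor $\wt{f}$ through the fiber product $W := X' \times_X {\rm Bl}_Y(X)$, writing $\wt{f} = q \circ j$, where $j: {\rm Bl}_{Y'}(X') \to W$ is the induced morphism and $q: W \to {\rm Bl}_Y(X)$ is the second projection. Since $\Proj$ commutes with base change, $W = \Proj_{X'}(f^*\sR) = \Proj_{X'}\bigl(\bigoplus_n f^*\sI^n\bigr)$. The multiplication maps $f^*\sI^n = \sI^n \otimes_{\sO_X} \sO_{X'} \to \sI^n \sO_{X'} = (\sI')^n$ assemble into a surjection of graded $\sO_{X'}$-algebras $f^*\sR \twoheadrightarrow \sR'$, and a surjection of graded quasi-coherent algebras induces a closed immersion on $\Proj$. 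Hence $j$ is a closed immersion: this is exactly the familiar inclusion of the strict transform into the total transform. In particular $j$ is finite.

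Finally, $q$ is the base change of the finite morphism $f$ along ${\rm Bl}_Y(X) \to X$, hence finite, and every closed immersion is finite; therefore $\wt{f} = q \circ j$ is finite as a composite of finite morphisms. The one step requiring genuine care—the main obstacle—is the verification that $j$ is a closed immersion. This rests on the surjectivity of $f^*\sR \to \sR'$ (immediate, since $\sI^n\sO_{X'}$ and $(\sI\sO_{X'})^n$ are the same ideal) together with the standard fact from EGA~II that $\Proj$ of a surjection of graded algebras is a closed immersion. Everything else reduces to the stability of finiteness under base change and composition and to closed immersions being finite.
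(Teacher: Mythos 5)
Your proof is correct and follows essentially the same route as the paper: both arguments exploit the surjection of graded Rees algebras induced by $\sI^n \mapsto \sI^n\sO_{X'}=(\sI\sO_{X'})^n$ to exhibit $\wt{f}$ as a closed immersion into a scheme finite over ${\rm Bl}_Y(X)$. The only (cosmetic) difference is that the paper reduces to the affine case and uses an ambient $\P^n_X$ obtained from a choice of generators of the ideal, whereas you work intrinsically with the fiber product $X'\times_X{\rm Bl}_Y(X)=\Proj_{X'}(f^*\sR)$, which avoids that choice.
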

\begin{proof}
It is enough to prove the lemma when $X = \Spec(A)$ is affine.
Let $I \subset A$ denote the ideal of definition of $Y$. Let
$X' = \Spec(B)$ so that there is a finite ring homomorphism
$A \to B$. Set $J = IB$. 

We can find a polynomial ring $A[x_0, \cdots , x_n]$ and a 
surjective graded ring homomorphism $A[x_0, \cdots , x_n] \surj A[It]$,
where $A[It]$ is the Rees-algebra (recalled in the
proof of \propref{prop:pro-desc-CM-A}) of $I$ over $A$ .
This yields a surjection $B[x_0, \cdots , x_n] \surj B[Jt]$ and 
hence a commutative diagram
\[
\xymatrix@C1pc{
{\rm Bl}_{Y'}(X') \ar[r] \ar[d] & \P^n_{X'} \ar[d] \\
{\rm Bl}_Y(X) \ar[r] & \P^n_X.}
\]

It is clear that the horizontal arrows are closed immersions.
Since the right vertical arrow is clearly finite, it follows
that the left vertical arrow is also finite.
\end{proof}

\vskip .3cm

\section{Torsion in bi-relative $K$-theory}\label{sec:pro-desc}
Let $k$ be any field.
In this section, we use some results of \cite{GH-1} to study the
torsion in the bi-relative $K$-groups associated to certain abstract blow-ups
in $\Sch_k$. The results of this section will be used in the
proof of the torsion theorem for surfaces. In \S~\ref{sec:Pf:Chow-res},
we shall apply these results to prove \thmref{thm:Chowgroup}.

\subsection{Pro-spectra and their weak equivalence}
\label{sec:pro-desc-**}
Recall that a pro-object in a category $\sC$ consists of 
a covariant functor from a small cofiltering category to $\sC$. 
A good exposition of pro-objects in a category can be found in 
\cite[\S~2]{Isaksen}.
In this paper, a pro-object in a category $\sC$ will always mean a sequence 
$\{A_1 \xleftarrow{\alpha_1} A_2 \xleftarrow{\alpha_2} \cdots \}$ of objects 
in $\sC$. It will be formally denoted by $``{{\underset{i}\varprojlim}}" A_i$. 
A morphism $f\colon ``{{\underset{i}\varprojlim}}" A_i \to
``{{\underset{j}\varprojlim}}" B_j$ in the category ${\rm pro}\sC$ of
pro-objects in $\sC$ is an element of the set
${\underset{j}\varprojlim} {\underset{i}\varinjlim} \
\Hom_{\sC}(A_i, B_j)$. 
In particular, such a morphism $f$ is the
same as giving a function $\lambda\colon \N^{+} \to \N^{+}$ and a morphism
$f_i \colon A_{\lambda(i)} \to B_{i}$ in $\sC$ for each $i \ge 1$
such that for any $j \ge i$, there is some $l \ge \lambda(i),  \lambda(j)$
so that the diagram
\begin{equation}\label{eqn:Ind-Obj}
\xymatrix@C1pc{
A_l \ar[r] \ar[dr] & A_{\lambda(j)} \ar[r]^-{f_j} &  B_j \ar[d] \\
& A_{\lambda(i)} \ar[r]_-{f_i} & B_i}
\end{equation}
commutes in $\sC$.
We shall call such a morphism to be {\sl strict} if
$\lambda$ is the identity function.

If $\sC$ admits cofiltered limits,
the limit of $``{{\underset{i}\varprojlim}}" A_i$ will be denoted by
$\underset{i}\varprojlim \ A_i$.   
If $\sC$ is an abelian category, then so is ${\rm pro}\sC$.
If $f \colon ``{{\underset{i}\varprojlim}}" A_i \to
``{{\underset{i}\varprojlim}}" B_i$ is a strict morphism, then one checks
easily that ${\rm Ker}(f) = ``{{\underset{i}\varprojlim}}" {\rm Ker}(f_i)$
and ${\rm Coker}(f) = ``{{\underset{i}\varprojlim}}" {\rm Coker}(f_i)$.
In particular, a sequence of strict morphisms of pro-objects 
\begin{equation}\label{eqn:pro-exact}
``{{\underset{i}\varprojlim}}" A_i \to
``{{\underset{i}\varprojlim}}" B_i \to
``{{\underset{i}\varprojlim}}" C_i 
\end{equation}
is exact in the abelian category ${\rm pro}\sC$ if it restricts to an exact 
sequence of objects in $\sC$ for each $i \in \N^{+}$.
We should warn that the exactness of ~\eqref{eqn:pro-exact} does not 
imply that the sequence 
remains exact if we replace $``{{\underset{i}\varprojlim}}"$ by 
$\underset{i}\varprojlim$. 
We refer the reader to \cite[Appendix~4]{AM} for these facts about pro-objects
in abelian categories.
%\enlargethispage{25pt}

Consider a Cartesian square in $\Sch_k$:
\begin{equation}\label{eqn:abs-blow-up-0}
\xymatrix@C1pc{
Y' \ar[r]^{\iota'} \ar[d]_{g} & X' \ar[d]^{f} \\
Y \ar[r]_{\iota} & X.}
\end{equation}

Recall that ~\eqref{eqn:abs-blow-up-0} is called an
{\sl abstract blow-up square} 
if $\iota$ is a closed immersion, $f$ is proper and the induced map
$X' \setminus Y' \to X \setminus Y$ is an isomorphism.
We say that ~\eqref{eqn:abs-blow-up-0} is a {\sl finite abstract 
blow-up square} if it is an abstract blow-up square such that $f$ is finite. 

Given a presheaf of (possibly non-connective) 
$S^1$-spectra $\sL$ on $\Sch_k$, we let $\sL(X, X')$ denote the homotopy
fiber of the map of spectra $g^*\colon \sL(X) \to \sL(X')$.
We let $\sL(X,X', Y, Y')$ denote the homotopy fiber of the map of
spectra $\sL(X, Y) \to \sL(X', Y')$. It is easy to check that
$\sL(X,X', Y, Y')$ is same as the homotopy fiber of the map of
spectra $\sL(X, X') \to \sL(Y, Y')$ in the homotopy category of spectra. 
We shall often denote the homotopy
groups $\pi_n(\sL(X,X', Y, Y'))$ by $\sL_n(X,X', Y, Y')$ for $n \in \Z$.

We shall say that a presheaf of spectra $\sL$ on $\Sch_k$ satisfies 
pro-descent for the square ~\eqref{eqn:abs-blow-up-0} if
the induced map of pro-abelian groups
$``{{\underset{r}\varprojlim}}" \pi_n(\sL(X, rY))$
$\to ``{{\underset{r}\varprojlim}}" \pi_n(\sL(X', rY'))$ is an
isomorphism for all $n \in \Z$. Note that a pro-descent of 
presheaves of spectra can also be defined using suitable model structures on
the presheaves of pro-spectra on $\Sch_k$ (see, for instance, \cite{Isak-1}).
But we do not use descent in that generality in this paper. 

\subsection{Relative and bi-relative $K$-theory}
\label{sec:boundedness}
If we let $\sL$ denote the presheaf of $K$-theory spectra 
on $\Sch_k$ (see the last part of \S~\ref{sec:Zero-C}),
we shall call $K_n(X, Y)$ and $K_n(X,X', Y, Y')$ the relative
and the bi-relative $K$-groups, respectively.

%\enlargethispage{20pt}

%\subsection{Descent for relative and bi-relative $K$-theory}
%\label{sec:boundedness}
%In this subsection, we assume $k$ to be any field.
Let $f\colon X' \to X$ be a finite morphism in $\Sch_k$. Let $\sI_Y$ be a sheaf
of ideals on $X$ such that the map $\sI_Y \to 
f_*(f^{-1}\sI_Y \cdot \sO_{X'}) = \sI_Yf_*(\sO_{X'})$ is an
isomorphism. Let $\iota\colon Y \inj X$ be the closed subscheme defined by 
$\sI_Y$ and let $Y' = Y \times_X X'$. In this case, we shall say that 
$Y$ is a conducting subscheme and 
write $K_n(X,X', Y, Y')$ in short as $K_n(X, X', Y)$.
For any locally closed subscheme $U \subset X$, we let $Y_U = Y \times_X U$
and $U' = X' \times_X U$.

\begin{lem}\label{lem:MV-bi-rel}
Let $U, V \subset X$ be two open subsets such that $X = U \cup V$
and let $W = U \cap V$. Then the induced square of spectra
\begin{equation}\label{eqn:MV-bi-rel-00}
\xymatrix@C1pc{
K(X,X',Y) \ar[r] \ar[d] & K(U, U', Y_U) \ar[d] \\
K(V, V', Y_V) \ar[r] & K(W, W', Y_W)}
\end{equation}
is homotopy Cartesian.
\end{lem}
\begin{proof}
Let $K^{X \setminus U}(X)$ denote the homotopy fiber of the restriction map
$K(X) \to K(U)$. Then it follows from \cite[Theorem~8.1]{TT} that the
map $K^{X \setminus U}(X) \to K^{V \setminus W}(V)$ is a homotopy equivalence
and the same holds if we replace $X$ by $Y$. The homotopy fiber sequence
\[
K^{X \setminus U}(X, Y) \to K^{X \setminus U}(X) \to K^{Y \setminus U}(Y)
\]
shows that the map
$K^{X \setminus U}(X, Y) \to K^{V \setminus W}(V, Y_V)$ is a homotopy equivalence.
The same is also true if we replace $X$ by $X'$. 

Notice now that $K^{X \setminus U}(X, Y)$ is also the homotopy fiber of the
map $K(X,Y) \to K(U, Y_U)$. 
In particular, we have a commutative diagram of spectra
\begin{equation}\label{eqn:MV-bi-rel-1}
\xymatrix@C1pc{
K^{X \setminus U}(X, X',Y) \ar[r] \ar[d] & K(X, X', Y) \ar[r] \ar[d] &
K(U, U', Y_U) \ar[d] \\
K^{X \setminus U}(X, Y) \ar[r] \ar[d] & K(X,Y) \ar[r] \ar[d] & K(U, Y_U) \ar[d] 
\\
K^{X' \setminus U'}(X', Y') \ar[r] & K(X',Y') \ar[r] & K(U', Y'_{U'}),}
\end{equation}
where all rows and columns are homotopy fiber sequences, and these 
fiber sequences uniquely define $K^{X \setminus U}(X, X',Y)$.
The commutative diagram of homotopy fiber sequences
\begin{equation}\label{eqn:MV-bi-rel-2}
\xymatrix@C1pc{ 
K^{X \setminus U}(X, X',Y) \ar[r] \ar[d] & 
K^{X \setminus U}(X, Y) \ar[r] \ar[d] &
K^{X' \setminus U'}(X', Y') \ar[d] \\
K^{V \setminus W}(V, V',Y_V) \ar[r] & 
K^{V \setminus W}(V, Y_V) \ar[r] &
K^{V' \setminus W'}(V', Y'_{V'})}
\end{equation}
now shows that the map $K^{X \setminus U}(X, X',Y) \to K^{V \setminus W}(V, V',Y_V)$
is a homotopy equivalence. But this is equivalent to saying that
~\eqref{eqn:MV-bi-rel-00} is homotopy Cartesian.
\end{proof}

An argument identical to the proof of \lemref{lem:MV-bi-rel} also proves
the following.

\begin{lem}\label{lem:MV-rel}
Let $U, V \subset X$ be two open subsets such that $X = U \cup V$
and let $W = U \cap V$. Let $Y \subset X$ be a closed subscheme.
Then the induced square of spectra
\begin{equation}\label{eqn:MV-bi-rel-0}
\xymatrix@C1pc{
K(X,Y) \ar[r] \ar[d] & K(U, Y_U) \ar[d] \\
K(V, Y_V) \ar[r] & K(W, Y_W)}
\end{equation}
is homotopy Cartesian.
\end{lem}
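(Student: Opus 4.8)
The plan is to follow verbatim the strategy of \lemref{lem:MV-bi-rel}, simply dropping the bi-relative layer. Write $Z = X \setminus U$ for the closed complement of $U$, and for any of the schemes in sight let $K^{Z}(X)$ denote the homotopy fiber of the restriction $K(X) \to K(U)$, the $K$-theory of $X$ with supports on $Z$. First I would define $K^{X \setminus U}(X, Y)$ to be the homotopy fiber of the map of relative spectra $K(X, Y) \to K(U, Y_U)$, and record at the outset that pulling back the open cover $X = U \cup V$ along the closed immersion $Y \inj X$ yields an open cover $Y = Y_U \cup Y_V$ with $Y_U \cap Y_V = Y_W$, so all the support-complements match up as locally closed subsets ($X \setminus U = V \setminus W$ and $Y \setminus U = Y_V \setminus Y_W$).

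The key geometric input is Thomason--Trobaugh localization \cite[Theorem~8.1]{TT}. Since $X \setminus U$ and $V \setminus W$ coincide as locally closed subsets of $X$, the restriction map $K^{X \setminus U}(X) \to K^{V \setminus W}(V)$ is a homotopy equivalence, and the identical statement applied to the closed subscheme $Y$ gives $K^{Y \setminus U}(Y) \xrightarrow{\simeq} K^{Y_V \setminus Y_W}(Y_V)$. These are exactly the two excision equivalences invoked in the proof of \lemref{lem:MV-bi-rel}.

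Next I would assemble the relative $K$-theory with supports into the homotopy fiber sequence
\[
K^{X \setminus U}(X, Y) \to K^{X \setminus U}(X) \to K^{Y \setminus U}(Y),
\]
obtained by taking homotopy fibers along $X \setminus U$ of the two defining fiber sequences $K(X,Y) \to K(X) \to K(Y)$ and $K(U, Y_U) \to K(U) \to K(Y_U)$. This is precisely the sequence already used in \lemref{lem:MV-bi-rel}. Comparing it with its counterpart over $V$, namely $K^{V \setminus W}(V, Y_V) \to K^{V \setminus W}(V) \to K^{Y_V \setminus Y_W}(Y_V)$, and feeding in the two equivalences of the previous paragraph, the five-lemma on homotopy groups forces
\[
K^{X \setminus U}(X, Y) \to K^{V \setminus W}(V, Y_V)
\]
to be a homotopy equivalence.

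Finally, $K^{V \setminus W}(V, Y_V)$ is by construction the homotopy fiber of $K(V, Y_V) \to K(W, Y_W)$, while $K^{X \setminus U}(X, Y)$ is the homotopy fiber of $K(X, Y) \to K(U, Y_U)$. Thus the horizontal homotopy fibers of the square in the statement agree under the comparison just established, which is exactly the assertion that the square is homotopy Cartesian. I do not expect any serious obstacle here, since the argument is a strict simplification of \lemref{lem:MV-bi-rel}; the only point meriting a moment's care is that Thomason--Trobaugh localization be legitimately applied to $Y$, and this is harmless because $Y$ is a quasi-compact, quasi-separated (indeed quasi-projective) $k$-scheme, possibly non-reduced, for which \cite[Theorem~8.1]{TT} holds without change.
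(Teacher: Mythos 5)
Your proposal is correct and is essentially the paper's own argument: the paper disposes of this lemma with the single remark that ``an argument identical to the proof of \lemref{lem:MV-bi-rel} also proves'' it, and your write-up is precisely that argument with the bi-relative layer stripped away (Thomason--Trobaugh excision applied to $X$ and to $Y$, then the fiber sequence $K^{X\setminus U}(X,Y)\to K^{X\setminus U}(X)\to K^{Y\setminus U}(Y)$ compared with its counterpart over $V$). The closing remark that \cite[Theorem~8.1]{TT} applies to the possibly non-reduced closed subscheme $Y$ because it is quasi-compact and quasi-separated is the right thing to check and is indeed harmless.
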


\subsection{Torsion in relative and bi-relative $K$-groups}
\label{sec:Tor-rel}
We let $k$ be a field of characteristic $p > 0$ and keep the notations of
\S~\ref{sec:boundedness}. Our goal in this subsection
is to show using the Zariski descent results of
\S~\ref{sec:boundedness} that various relative and bi-relative $K$-groups
associated to a resolution of singularities of a singular scheme
are torsion groups of bounded exponents. We begin with the following.

\begin{lem}\label{lem:p-fin-exp}
Suppose that ~\eqref{eqn:abs-blow-up-0} is a conductor square
and let $n \in \Z$ be any integer. 
Then the bi-relative $K$-group $K_n(X,X',Y)$ is a 
$p$-primary torsion group of bounded exponent.
\end{lem}
\begin{proof} 
If $X$ is affine, this follows from \cite[Theorem~C]{GH-1}.
In general, we shall argue by induction on the minimal number of affine open
subsets that cover $X$.
Let us write $X = U_1 \cup \cdots \cup U_r$, where each $U_i$ is affine
open in $X$. Set $U = U_1, V = U_2 \cup \cdots \cup U_r$ and
$W = U \cap V = (U \cap U_2) \cup \cdots \cup (U \cap U_r)$.

It follows from \lemref{lem:MV-bi-rel} that for every $n \in \Z$,
there is an exact sequence
\[
K_{n+1}(W, W', Y_W) \to K_n(X, X', Y) \to K_n(U, U', Y_U) \oplus
K_n(V, V', Y_V).
\]

Since our schemes are all separated, each $U \cap U_i$ is affine for
$2 \le i \le r$. Since $U$ is affine, $K_n(U, U', Y_U)$ is a $p$-primary
torsion group of bounded exponent. It follows by induction on the minimal 
number of open subsets in an affine open cover that
$K_{n+1}(W, W', Y_W)$ and $K_n(V, V', Y_V)$ are $p$-primary
torsion groups of bounded exponents. We deduce easily that 
$K_n(X, X', Y)$ is also a $p$-primary group torsion of bounded exponent.
\end{proof}

Using \cite[Theorem~A]{GH-1} and \lemref{lem:MV-rel},
an argument identical to the proof
of \lemref{lem:p-fin-exp} proves the following result.

\begin{lem}\label{lem:p-fin-exp-nil}
Let $k$ be a field of characteristic $p > 0$. 
Let $X \in \Sch_k$ and let $Y \subset X$ be a closed subscheme whose sheaf of 
ideals in $\sO_X$ is nilpotent. Then for every integer $n \in \Z$, the 
relative $K$-group $K_n(X,Y)$ is a $p$-primary torsion group of bounded 
exponent.
\end{lem}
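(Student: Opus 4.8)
The plan is to mirror the proof of \lemref{lem:p-fin-exp} verbatim, replacing the bi-relative input \cite[Theorem~C]{GH-1} by the relative statement \cite[Theorem~A]{GH-1} and \lemref{lem:MV-bi-rel} by \lemref{lem:MV-rel}. First I would dispose of the affine case. When $X = \Spec(A)$ is affine, the closed subscheme $Y$ corresponds to a nilpotent ideal $I \subset A$, and \cite[Theorem~A]{GH-1} asserts precisely that in characteristic $p$ the relative $K$-groups $K_n(A, I)$ are $p$-primary torsion of bounded exponent. This is the arithmetic core of the argument and the single place where positive characteristic is essential; everything afterwards is formal.

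For the general case I would induct on the minimal number $r$ of affine open subsets needed to cover $X$. Writing $X = U_1 \cup \cdots \cup U_r$ with each $U_i$ affine, I set $U = U_1$, $V = U_2 \cup \cdots \cup U_r$ and $W = U \cap V$. Since the schemes in $\Sch_k$ are separated, each intersection $U \cap U_i$ is affine, so $W$ is covered by at most $r-1$ affine opens, and likewise $V$ is covered by $r-1$ affine opens. I would also record the (trivial but necessary) point that the ideal sheaf of $Y_U = Y \times_X U$ is the restriction $\sI_Y|_U$, which remains nilpotent; the same holds for $Y_V$ and $Y_W$. Hence the inductive hypothesis applies to the pairs $(U, Y_U)$, $(V, Y_V)$ and $(W, Y_W)$.

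Next I would feed the homotopy Cartesian square of \lemref{lem:MV-rel} into its associated Mayer--Vietoris long exact sequence, giving for each $n$ an exact sequence
\[
K_{n+1}(W, Y_W) \to K_n(X, Y) \to K_n(U, Y_U) \oplus K_n(V, Y_V).
\]
This exhibits $K_n(X, Y)$ as an extension of a subgroup of $K_n(U, Y_U) \oplus K_n(V, Y_V)$ by a quotient of $K_{n+1}(W, Y_W)$. By the inductive hypothesis all three outer groups are $p$-primary torsion of bounded exponent, and this class of abelian groups is closed under passage to subgroups, quotients and extensions; therefore $K_n(X, Y)$ lies in the same class.

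The only genuine obstacle is the affine base case, which is imported wholesale from \cite{GH-1}. The remainder is the bookkeeping that the bounded-exponent, $p$-primary property propagates through the Mayer--Vietoris induction, exactly as in \lemref{lem:p-fin-exp}. The one detail worth checking is that the exponent grows only additively at each inductive step---if $p^a$ and $p^c$ annihilate the sub and the quotient respectively, then $p^{a+c}$ annihilates the middle term---so after the finitely many steps dictated by the affine cover the exponent stays bounded.
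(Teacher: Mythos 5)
Your proposal matches the paper's own argument exactly: the paper proves this lemma by citing \cite[Theorem~A]{GH-1} for the affine case and then running the Mayer--Vietoris induction of \lemref{lem:p-fin-exp} verbatim with \lemref{lem:MV-rel} in place of \lemref{lem:MV-bi-rel}. Your additional remarks (restriction of a nilpotent ideal sheaf stays nilpotent; the exponent grows only additively through extensions) are correct and only make explicit what the paper leaves implicit.
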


\begin{lem}\label{lem:fini-exp-nil}
Let $k$ be a field of characteristic $p > 0$ and let $X \in \Sch_k$. 
Let $Z, Y \subset X$ be closed subschemes 
such that $Z_{\rm red} = Y_{\rm red}$. In the commutative square
~\eqref{eqn:abs-blow-up-0}, let $Z' = Z \times_X X'$. 
Then for any $n \in \Z$, the group $K_n(X, X', Y, Y')$ is
$p$-primary torsion of bounded exponent if and only if
so is $K_n(X, X', Z, Z')$.
\end{lem}
\begin{proof}
It follows from our assumption that $Z \subset mY \subset nZ$ for 
all $n \gg m \gg 0$. It suffices therefore to prove the lemma when
$Z \subset Y$.

In this case, we have a commutative diagram of spectra  
\begin{equation}\label{eqn:fini-exp-fin-0}
\xymatrix@C.8pc{
K(X, X', Y, Y') \ar[r] \ar[d] & 
K(X,X', Z, Z') \ar[r] \ar[d] & K(Y,Y', Z, Z') \ar[d] \\
K(X, Y) \ar[r] \ar[d] &  K(X, Z) \ar[r] \ar[d] &  K(Y, Z) \ar[d] \\ 
K(X', Y') \ar[r] &  K(X', Z') \ar[r] &  K(Y', Z')}
\end{equation}
where the rows and columns are homotopy fiber sequences. 
In particular, there is an exact sequence of bi-relative $K$-groups
\begin{equation}\label{eqn:fini-exp-fin-1}
K_{n+1}(Y, Y', Z, Z') \to K_n(X, X', Y, Y') \to K_n(X, X', Z, Z') \to
K_{n}(Y, Y', Z, Z'). 
\end{equation}

It suffices therefore to show that 
for every $n \in \Z$, the bi-relative $K$-group
$K_{n}(Y, Y', Z, Z')$ is a $p$-primary torsion group of bounded exponent.
But this is an immediate consequence of the vertical fiber
sequence on the right end of ~\eqref{eqn:fini-exp-fin-0}
and \lemref{lem:p-fin-exp-nil}.
\end{proof}

\begin{lem}\label{lem:conductor}
Given a finite abstract blow-up square ~\eqref{eqn:abs-blow-up-0}, there
exists a closed subscheme $Z \subset X$ such that $Z_{\rm red} = Y_{\rm red}$
and $\sI_Z = f_* \circ f^*(\sI_Z)$.
\end{lem}
\begin{proof}
We let $\sI$ denote the sheaf of ideals on $X$ defining $Y$.
Let $X'' \subset X$ denote the scheme-theoretic image of the finite map $f$
so that locally one has $X'' = \Spec({\sO_X}/{\sJ})$, where $\sJ= 
{\rm Ker}(\sO_X \to f_*(\sO_{X'}))$. There is a factorization
$X' \xrightarrow{f'} X'' \xrightarrow{f''} X$ so that  $f'$ is 
finite and surjective, $f''$ is a closed immersion and $f = f'' \circ f'$.
We have exact sequences of coherent $\sO_X$-modules   
\begin{equation}\label{eqn:conductor-0}
0 \to \sJ \to \sO_X \to \sO_{X''} \to 0;  \ \ \
0 \to \sO_{X''} \to f_*(\sO_{X'}) \to 
{f_*(\sO_{X'})}/{\sO_{X''}} \to 0.
\end{equation}

Since the maps $X' \to X'' \to X$ are isomorphisms away from $Y$, one easily
checks that $\sI^n \sJ = 0$ and $\sI^nf_*(\sO_{X'}) \subset \sO_{X''}$
for all $n \gg 0$. It follows using the Artin-Rees lemma in commutative
algebra (see \cite[Theorem~8.5]{Matsumura}) that for all $n \gg 0$,
one has $\sI^n \cap \sJ = 0$ so that $\sI^n = \sI^nf''_*(\sO_{X''})$. 
We let $n_0 \ge 1$ be the smallest integer such that this happens. 
If we let $\sI_1 = \sI^{n_0}$ and $\wt{\sI} = \sI_1 f'_*(\sO_{X'})$, then
we have an inclusion of the sheaves of ideals $\sI_1 \subseteq \wt{\sI}$
and $\wt{\sI}$ has the property that $\wt{\sI} = \wt{\sI} f'_*(\sO_{X'}) =
\wt{\sI} f_*(\sO_{X'})$. The lemma is therefore reduced to showing that
$\sI_1$ and $\wt{\sI}$ have the same support in $X''$.

To show this, we let $Y'' = Y \times_X X''$ (so that $Y' = Y'' \times_{X''} X'$)
and let $Z \subset X''$
be the closed subscheme defined by $\wt{\sI}$. We then observe that
the map $f'\colon n_0Y' \to n_0Y''$ factors through $n_0Y' \to Z \inj n_0Y''$.
%$n_0Y'$ is defined by $\sI_1 (\sO_{X'})$ but $\sI_1 (\sO_{X'}) = \wt{sI}$.
On the other hand, $f'$ is finite and surjective and hence the composite
map $n_0Y' \to Z \inj n_0Y''$ is also finite and surjective. 
But this implies that
$Z_{\rm red} = Y''_{\rm red}$. This finishes the proof.
\end{proof}

\begin{prop}\label{prop:fini-exp-fin}
Let $k$ be a field of characteristic $p > 0$.
Given a finite abstract blow-up square ~\eqref{eqn:abs-blow-up-0}
and an integer $n \in \Z$, 
the bi-relative $K$-group $K_n(X, X', Y, Y')$ is a $p$-primary torsion
group of bounded exponent. 
\end{prop}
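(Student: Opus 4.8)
The plan is to reduce the general statement to the case of a conducting subscheme, which is already settled by \lemref{lem:p-fin-exp}, and to bridge the gap using the nilpotent-invariance recorded in \lemref{lem:fini-exp-nil}. The only reason one cannot invoke \lemref{lem:p-fin-exp} immediately is that the given $Y$ need not be a conducting subscheme for $f$; so the first and main step is to manufacture a conducting subscheme $Z$ having the same underlying reduced scheme as $Y$, sitting in the same finite abstract blow-up square.

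Working affine-locally, I would write $X = \Spec A$, $X' = \Spec B$ with $A \to B$ finite and $\sI_Y$ corresponding to an ideal $I \subset A$, and introduce the conductor $\mathfrak{c} = \{a \in A : aB \subseteq A\} = \operatorname{Ann}_A(B/A)$; globally this is the coherent ideal sheaf $\mathfrak{c} = \operatorname{Ann}_{\sO_X}(f_*\sO_{X'}/\sO_X)$. The choice is to let $Z$ be the closed subscheme defined by $\sI_Z = \sI_Y \cdot \mathfrak{c}$ and $Z' = Z \times_X X'$. Three elementary facts then need checking. First, $Z$ is conducting: since $\mathfrak{c}$ is itself an ideal of $B$ one has $(I\mathfrak{c})B = I(\mathfrak{c}B) = I\mathfrak{c}$, so $\sI_Z$ is an ideal of $f_*\sO_{X'}$ and the natural map $\sI_Z \to f_*f^*\sI_Z$ is an isomorphism. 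Second, $\sI_Z = \sI_Y\mathfrak{c} \subseteq \sI_Y$, whence $Y \subseteq Z$. Third, as a set $V(\sI_Z) = V(\sI_Y) \cup V(\mathfrak{c})$, and $V(\mathfrak{c})$ is exactly the locus where $f$ fails to be an isomorphism, which is contained in $Y$ because the square is an abstract blow-up square; hence $Z_{\rm red} = Y_{\rm red}$.

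With $Z$ constructed, the square obtained by replacing $(Y,Y')$ with $(Z,Z')$ is again a finite abstract blow-up square: $f$ is still finite, and $f$ is an isomorphism over $X \setminus Z \subseteq X \setminus Y$. Since $Z$ is conducting, \lemref{lem:p-fin-exp} now applies and shows that $K_n(X,X',Z,Z')$ is a $p$-primary torsion group of bounded exponent for every $n \in \Z$. Finally, because $Y \subseteq Z$ with $Y_{\rm red} = Z_{\rm red}$, I would apply \lemref{lem:fini-exp-nil} with the chain $Y \subseteq Z$ playing the role of its $Z \subseteq Y$; this transfers the bounded-torsion property from $K_n(X,X',Z,Z')$ to $K_n(X,X',Y,Y')$, finishing the argument.

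I expect the genuine content to lie entirely in the first step, namely verifying that $\sI_Y\cdot\mathfrak{c}$ is simultaneously a conducting ideal and cuts out the same reduced locus as $\sI_Y$, and that the enlarged square remains a finite abstract blow-up square so that \lemref{lem:p-fin-exp} is legitimately applicable. Everything after that is a formal two-step invocation of the previously established lemmas, requiring no $K$-theoretic input beyond what is already packaged (via \cite{GH-1}) into \lemref{lem:p-fin-exp} and \lemref{lem:fini-exp-nil}.
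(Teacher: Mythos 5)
Your argument is correct and rests on the same two ingredients as the paper's proof, namely \lemref{lem:p-fin-exp} for conducting subschemes and \lemref{lem:fini-exp-nil} for nilpotent thickenings, but the reduction between them is carried out differently. The paper picks a conducting subscheme $Z$ whose support is merely \emph{contained} in $Y$, then thickens $Y$ to $rY \supseteq Z$ for $r \gg 1$ and splits into two cases according to whether $Z_{\rm red}$ is all of $Y$ or a proper subset; the latter case needs the auxiliary exact sequence ~\eqref{eqn:fini-exp-fin-2} and a further application of \lemref{lem:p-fin-exp} to the finite map $rY' \to rY$ with conducting subscheme $Z$. Your choice $\sI_Z = \sI_Y\cdot\mathfrak{c}$ arranges from the start that $Z \supseteq Y$ with $Z_{\rm red}=Y_{\rm red}$, so the case analysis and the auxiliary sequence evaporate and a single application of \lemref{lem:fini-exp-nil} (read in the direction from the larger subscheme to the smaller, which is legitimate because that lemma is an equivalence) finishes the proof; your verification that $\sI_Y\mathfrak{c}$ is conducting is correct, since $\mathfrak{c}$, and hence $\sI_Y\mathfrak{c}$, is an ideal of $f_*\sO_{X'}$ contained in $\sO_X$. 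What your route buys is the elimination of the two-case argument and of the extra bi-relative group $K_n(rY,rY',Z,Z')$; it costs nothing beyond the standard computation with the conductor. The one caveat, shared equally by the paper's unproved assertion that a conducting subscheme supported in $Y$ exists, is that the conductor construction tacitly assumes $\sO_X \to f_*\sO_{X'}$ is injective; this holds in every application the paper makes of the proposition.
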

\begin{proof}
By \lemref{lem:conductor}, we can find a closed subscheme 
$Z \subset X$ whose support is same as that of $Y$ such that 
$\sI_Z = \sI_Z f_*(\sO_{X'})$. We let $Z' = Z \times_X X'$.
By \lemref{lem:fini-exp-nil}, it is enough to show that
$K_n(X, X', Z, Z')$ is a $p$-primary torsion
group of bounded exponent. But this follows from 
\lemref{lem:p-fin-exp}.
\end{proof}

\begin{comment}
we can find a closed subscheme $Z \subset X$ whose support is
contained in $Y$ such that $\sI_Z = \sI_Z f_*(\sO_{X'})$. Setting 
$Z' = Z \times_X X'$, it follows that $Z \subset rY$ for $r \gg 1$.
Hence we obtain a commutative diagram of spectra
of the form  ~\eqref{eqn:fini-exp-fin-0} with $Y$ and $Y'$ replaced by
$rY$ and $rY'$, respectively for $r \gg 1$.

If $Z_{\rm red}$ is a proper closed subset of $Y$, then
the map $rY' \to rY$ is also a finite abstract blow-up
with $Z \subset rY$ a conducting subscheme.
Hence, \lemref{lem:p-fin-exp} says that $K_n(rY, rY', Z, Z')$ is $p$-primary 
torsion of bounded exponent for each $n \in \Z$.
Using the exact sequence
\begin{equation}\label{eqn:fini-exp-fin-2}
K_{n+1}(rY, rY', Z, Z') \to K_n(X, X', rY, rY') \to K_n(X, X', Z, Z'),
\end{equation}
we see that $K_n(X, X', rY, rY')$ will be $p$-primary torsion of bounded 
exponent if $K_n(X, X', Z, Z')$ is so. 

If $Z$ has same support as $Y$, then the inclusion $Z \subset rY$
is a nilpotent thickening and again, $K_n(X, X', rY, rY')$ will be
$p$-primary torsion of bounded exponent if $K_n(X, X', Z, Z')$ is so,
by \lemref{lem:fini-exp-nil}.
We have thus reduced to showing that 
$K_n(X, X', Z, Z')$ is $p$-primary torsion of bounded exponent.
But this follows from \lemref{lem:p-fin-exp}.
\end{comment}

\begin{prop}\label{prop:pro-desc-CM-A}
Let $k$ be a field of characteristic $p > 0$.
Let $X \in \Sch_k$ be a reduced scheme with only isolated Cohen-Macaulay
singularities. Let $Y$ denote the singular locus
of $X$ with reduced induced closed subscheme structure.
Let $Z \subset X$ be a closed subscheme such that $Z_{\rm red} = Y$. 
Let $\pi\colon \wt{X} \to X$ denote the blow-up of $X$ along $Z$ and let
$E \subset \wt{X}$ denote the reduced exceptional divisor.
Then for every $n \in \Z$ and $r \ge 1$, the bi-relative $K$-group
$K_n(X, \wt{X}, rY, rE)$ is a $p$-primary torsion group of bounded
exponent.
\end{prop}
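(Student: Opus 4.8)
The plan is to reduce the desired vanishing to the finite case via dévissage on the blow-up, exactly in the spirit of the preceding lemmas. The key point is that the blow-up $\pi:\wt X \to X$ along $Z$ is \emph{not} finite, so Proposition~\ref{prop:fini-exp-fin} does not apply directly; however, because $X$ is Cohen-Macaulay with only isolated singularities, the exceptional fibres are proper but behave like projective spaces over the (zero-dimensional) singular locus $Y$, and one expects the bi-relative $K$-theory to be governed by the infinitesimal structure near $Y$. First I would use Lemma~\ref{lem:fini-exp-nil}, which says that the bounded-exponent $p$-torsion property for $K_n(X,\wt X, Y, E)$ is insensitive to replacing $Y$ and $E$ by any closed subschemes with the same reduced structure. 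This lets me replace the reduced pair $(Y,E)$ by a more convenient pair $(Z, \pi^{-1}(Z))$ where $Z$ is chosen to be a conducting subscheme, i.e.\ so that $\sI_Z \xrightarrow{\simeq} \pi_* \circ \pi^*(\sI_Z)$.

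The heart of the argument will be a Mayer--Vietoris reduction to the affine situation, where one can invoke the Geisser--Hesselholt results \cite{GH-1}. Since $X$ has only isolated singularities and is quasi-projective, I would cover $X$ by finitely many affine opens and apply Lemma~\ref{lem:MV-bi-rel} repeatedly, inducting on the minimal number of affine opens in the cover — precisely the bootstrapping used in the proof of Lemma~\ref{lem:p-fin-exp}. On each affine open $U$, the restricted square $(U, \pi^{-1}(U), Y_U, E_U)$ is a blow-up square of an affine Cohen-Macaulay scheme along its (finite) singular locus. The Mayer--Vietoris sequence of Lemma~\ref{lem:MV-bi-rel} then expresses $K_n(X,\wt X, Y, E)$ as built (via a long exact sequence) from the analogous groups over affine pieces and their intersections, all of which are again affine by separatedness; bounded-exponent $p$-torsion is preserved under the extensions appearing in such a sequence.

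Thus I would reduce to the case $X = \Spec(A)$ affine. The main obstacle — and the step I expect to require the genuine input of \cite{GH-1} — is establishing that $K_n(A, \wt A, Y, E)$ is $p$-primary of bounded exponent in this affine Cohen-Macaulay case. Here Proposition~\ref{prop:fini-exp-fin} is not literally applicable because $\pi$ is a blow-up rather than a finite map; the resolution is to pass to the \emph{formal} or pro-infinitesimal neighbourhood of $Y$, where the Geisser--Hesselholt computation of bi-relative $K$-theory in terms of topological cyclic homology (their Theorems~A and~C) shows that the relevant groups are annihilated by a fixed power of $p$. I would therefore combine the formal-neighbourhood comparison with the cdh- or pro-descent properties already used, reducing the blow-up square to a situation controlled by a finite map plus nilpotent thickenings, at which point Lemma~\ref{lem:fini-exp-nil} and Lemma~\ref{lem:p-fin-exp} close the argument. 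The delicate part is verifying that the exceptional divisor $E$, rather than an arbitrary conducting subscheme, may be used — this is exactly what Lemma~\ref{lem:fini-exp-nil} was set up to handle, so the Cohen-Macaulay hypothesis enters precisely to guarantee that a suitable conducting subscheme $Z$ with $Z_{\rm red}=Y$ exists.
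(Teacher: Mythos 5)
There is a genuine gap at the central step. You correctly observe that Proposition~\ref{prop:fini-exp-fin} does not apply because $\pi$ is not finite, but your proposed workaround --- ``pass to the formal or pro-infinitesimal neighbourhood of $Y$'' and invoke Geisser--Hesselholt --- is not an argument: Theorems A and C of \cite{GH-1} control nilpotent extensions and finite (Milnor-type) squares, respectively, and neither computes the bi-relative $K$-theory of a genuine blow-up square, nor is there any straightforward sense in which that bi-relative $K$-theory is determined by a formal neighbourhood of $Y$. The idea that actually makes the proof work, and which is absent from your proposal, is a geometric factorization of $\pi$: by the Northcott--Rees theory of reductions of ideals, after replacing $\sI_Z$ by a power (which does not change the blow-up) one finds a reduction ideal $\sJ \subset \sI_Z$, and --- this is exactly where the Cohen--Macaulay-with-isolated-singularities hypothesis is used, via \cite[Proposition~1.6]{Weibel-1} --- $\sJ$ may be taken to be a local complete intersection. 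Setting $X' = {\rm Proj}_X(\sR(\sJ))$ one gets $\wt{X} \xrightarrow{f} X' \xrightarrow{\pi'} X$ with $f$ finite and $\pi'$ a blow-up along a regular immersion $W$. Thomason's th\'eor\`eme on blow-ups with regularly immersed centers shows $K(X,W) \to K(X',W')$ is an equivalence, so the square over $\pi'$ contributes nothing, and the remaining finite piece $f$ is handled by Proposition~\ref{prop:fini-exp-fin}; Lemma~\ref{lem:fini-exp-nil} then transfers the conclusion back to the reduced pair $(Y,E)$.

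Two smaller points. First, you misplace the role of the Cohen--Macaulay hypothesis: it is not needed to produce a conducting subscheme (those exist for any finite birational map), but to guarantee that the reduction ideal is l.c.i., which is what makes Thomason's theorem applicable. Second, the Mayer--Vietoris reduction to the affine case, while harmless, is not needed here: the factorization argument is global, and the affine/local bootstrapping of Lemma~\ref{lem:p-fin-exp} is only required for the finite abstract blow-up input, which you may simply quote as Proposition~\ref{prop:fini-exp-fin}.
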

\begin{proof}
Letting $F_r$ denote the homotopy fiber of the map
$K(rY, Y) \to K(rE, E)$, there is a homotopy fibration sequence
$K(X, \wt{X}, rY, rE) \to K(X, \wt{X}, Y, E) \to F_r$.
Using \lemref{lem:p-fin-exp-nil}, we need only prove the proposition when
$r =1$.

Recall that for an ideal $I$ in a commutative ring $R$,
the Rees-algebra (also called the Blow-up algebra) $R(I)$ is the
graded $R$-algebra $\oplus_{n \ge 0} I^n$, where the summand $I^n$
is placed in degree $n$. The graded multiplication of $R(I)$ is
induced by the usual multiplication of various powers of $I$ in $R$.
Recall that a Noetherian scheme is called Cohen-Macaulay if each of its local
rings is a Cohen-Macaulay ring. 

Let $\sI_Z$ be the sheaf of ideals
on $X$ defining $Z$. By the Northcott-Rees theory of reduction of ideals,
there exists $m \ge 1$ and a minimal reduction ideal $\sJ$
of $\sI^m_Z$, where one can take $m =1$ if $k$ is infinite
(see \cite[Theorem~14.14]{Matsumura}). 
Here, we say that an ideal $\sJ \subset \sI_Z$ is a minimal
reduction of $\sI_Z$ if the stalks of $\sJ$ along $Y$ are generated by
$\dim(X)$ many elements and
the map ${\rm Proj}_X(\sR(\sI_Z)) \to {\rm Proj}_X(\sR(\sJ))$ is 
a finite morphism, where $\sR(\sI_Z)$ denotes the sheaf of Rees-algebras
of $\sI_Z$ over $\sO_X$ (see \cite[Theorem~1.5]{Weibel-1}).
Since the blow-up of $\sI_Z$ is unchanged if we replace $\sI_Z$ by
its powers, we can assume $m =1$ to obtain a reduction ideal.

Now, since $X$ is Cohen-Macaulay with only isolated singularities,
we can further assume that the reduction ideal $\sJ$ is a local 
complete intersection ideal sheaf in $\sO_X$ 
(see \cite[Proposition~1.6]{Weibel-1}). Setting
$X' = {\rm Proj}_X(\sR(\sJ))$, this gives rise to a 
commutative diagram
\begin{equation}\label{eqn:Desc-CM-0}
\xymatrix@C1pc{
\wt{X} \ar[r]^{f} \ar@/_1pc/[rr]_{\pi} & X' \ar[r]^{\pi'} & X,}
\end{equation}
where $f$ is finite and $\pi'$ is the blow-up along a regular closed 
immersion $W \subset X$ such that $Z \subset W$ with $W_{\rm red} = Y$.
By \lemref{lem:fini-exp-nil}, it suffices to show that
$K_n(X, \wt{X}, W, \wt{W})$ is a $p$-primary torsion group of bounded
exponent, where  $\wt{W} = W \times_X \wt{X}$.

We set $W' =  W \times_X X'$ so that  $\wt{W} = W' \times_{X'} \wt{X}$. 
It follows from \cite[Th{\'e}or{\`e}me~2.1]{Thomason-1} that 
the map  $K(X, W) \to K(X', W')$ is a homotopy equivalence.
On the other hand, we have a commutative diagram
\[
\xymatrix@C.8pc{
K(X, X', W, W') \ar[r] \ar[d] & K(X, W) \ar[r] \ar[d] & K(X', W') \ar[d] \\
K(X, \wt{X}, W, \wt{W}) \ar[r] & K(X, W) \ar[r] & K(\wt{X}, \wt{W})}
\]  
where the two rows are homotopy fiber sequences.
In particular, we get a homotopy fiber sequence
$K(X, X', W, W') \to K(X, \wt{X}, W, \wt{W}) \to K(X', \wt{X}, W', \wt{W})$.

We have shown above that $K(X, X', W, W')$ is contractible
and it follows from \propref{prop:fini-exp-fin} that
$K_n(X', \wt{X}, W', \wt{W})$ is $p$-primary torsion of bounded exponent.
We conclude that the same holds for $K_n(X, \wt{X}, W, \wt{W})$ too. 
This finishes the proof.
\end{proof} 

Let $X \in \Sch_k$ be singular.
Recall that a proper morphism of schemes $\pi\colon \wt{X} \to X$ is called 
a resolution of singularities, if its restriction
to the regular locus of $X$ is an isomorphism and $\wt{X}$ is regular.
We say that $\pi$ is a {\sl good resolution of singularities},
if it is obtained as a blow-up of $X$ along a closed subscheme
whose support is the singular locus of $X$.
It is well-known that a good resolution of singularities always exists in
characteristic zero. In characteristic $p > 0$, it exists
if either $\dim(X) \le 2$ or $\dim(X) = 3$ and $k$ is perfect
(see \cite{VO-1}, \cite{VO-2}).

\begin{thm}\label{thm:Desc-CM-Main-A}
Let $k$ be a field of characteristic $p > 0$. Let $X$ be a reduced 
quasi-projective $k$-scheme with only isolated Cohen-Macaulay singularities. 
Let $Y$ denote 
the singular locus of $X$ with reduced induced closed subscheme structure.
Let $\pi\colon\wt{X} \to X$ be a good resolution of singularities of $X$ 
with reduced exceptional divisor $E$. 
Then for every $n \in \Z$ and $r \ge 1$, the bi-relative $K$-group
$K_n(X, \wt{X}, rY, rE)$ is a $p$-primary torsion group of bounded
exponent.
\end{thm}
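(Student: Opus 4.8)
The plan is to reduce Theorem~\ref{thm:Desc-CM-Main-A} to Proposition~\ref{prop:pro-desc-CM-A}, which already handles the case where $\wt{X} \to X$ is the blow-up of $X$ along a closed subscheme $Z$ with $Z_{\rm red} = Y$. The issue is that an arbitrary resolution of singularities $\pi: \wt{X} \to X$ need not be such a blow-up, so I must interpolate between the given resolution and a suitable blow-up. First I would invoke the universal property of blowing up: since $\pi$ is a resolution (in particular proper and birational) and $X$ is Cohen-Macaulay with isolated singularities, I would choose a closed subscheme $Z \subset X$ supported on $Y$ such that $\pi$ factors through the blow-up $\pi_0: X_0 := {\rm Bl}_Z(X) \to X$. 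Concretely, one takes $Z$ large enough that the inverse image ideal sheaf $\sO_{\wt X}\cdot \pi^{-1}\sI_Z$ is invertible on $\wt X$; by the universal property there is then a morphism $g: \wt{X} \to X_0$ with $\pi = \pi_0 \circ g$. Here $E_0 \subset X_0$ denotes the reduced exceptional divisor of $\pi_0$, and $g$ restricts to an isomorphism over $X \setminus Y$.

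With the factorization $\wt{X} \xrightarrow{g} X_0 \xrightarrow{\pi_0} X$ in hand, the key step is to compare the three bi-relative $K$-theories arising from the squares associated to $\pi_0$, to $g$, and to the composite $\pi$. Writing $\wt{E} = E_0 \times_{X_0} \wt{X}$, I would set up a commutative diagram of homotopy fiber sequences relating $K(X, X_0, Y, E_0)$, $K(X, \wt{X}, Y, E)$, and $K(X_0, \wt{X}, E_0, \wt{E})$, exactly in the style of the diagram~\eqref{eqn:Desc-CM-0} and the fiber-sequence manipulations in the proof of Proposition~\ref{prop:pro-desc-CM-A}. This yields a long exact sequence in which $K_n(X, \wt{X}, Y, E)$ is squeezed between $K_n(X_0, \wt{X}, E_0, \wt{E})$ and $K_n(X, X_0, Y, E_0)$ (up to a shift). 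Since a group sandwiched between two $p$-primary torsion groups of bounded exponent is again $p$-primary torsion of bounded exponent, it suffices to establish the claim for each of these two outer terms.

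The term $K_n(X, X_0, Y, E_0)$ is precisely the object controlled by Proposition~\ref{prop:pro-desc-CM-A}, since $X_0 = {\rm Bl}_Z(X)$ is the blow-up of $X$ along $Z$ with $Z_{\rm red} = Y$; so it is $p$-primary torsion of bounded exponent. For the remaining term $K_n(X_0, \wt{X}, E_0, \wt{E})$, the point is that $g: \wt{X} \to X_0$ is a resolution of the \emph{regular} scheme $\wt{X}$ lying over $X_0$, and the singularities of $X_0$ are again isolated and lie over $Y$; more to the point, over the smooth locus $g$ is an isomorphism, so this bi-relative group is supported infinitesimally along $E_0$. I would handle it by the same Cohen-Macaulay reduction-of-ideals argument used in Proposition~\ref{prop:pro-desc-CM-A}, factoring $g$ through a finite morphism followed by a blow-up along a regular immersion, and then applying Proposition~\ref{prop:fini-exp-fin} together with Thomason's invariance \cite[Th{\'e}or{\`e}me~2.1]{Thomason-1} for the finite part. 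The main obstacle will be verifying that the intermediate scheme $X_0$ is itself Cohen-Macaulay with isolated singularities (so that Proposition~\ref{prop:pro-desc-CM-A}'s hypotheses apply to the second factor as well): blowing up a local complete intersection in a Cohen-Macaulay scheme preserves Cohen-Macaulayness, so choosing $Z$ to be a local complete intersection ideal as in the cited reduction theory \cite[Proposition~1.6]{Weibel-1} should secure this, but one must check the exceptional locus of $g$ remains a divisor and that the support conditions are preserved under base change, which is where the bookkeeping is delicate.
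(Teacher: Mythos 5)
Your overall strategy---interpolate a blow-up between $X$ and $\wt{X}$, and squeeze $K_n(X,\wt{X},Y,E)$ between two bi-relative terms controlled by \propref{prop:pro-desc-CM-A}---is reasonable in outline, but its first and decisive step is a genuine gap. The universal property of blowing up gives you a morphism $g:\wt{X}\to X_0={\rm Bl}_Z(X)$ only if $\pi^{-1}\sI_Z\cdot\sO_{\wt{X}}$ is invertible, and ``taking $Z$ large enough'' does not produce this: replacing $\sI_Z$ by a power changes nothing, since the pullback of $\sI_Z^r$ is the $r$-th power of the pullback of $\sI_Z$ and the blow-up is unchanged, and there is no monotonicity in $Z$ otherwise. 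What is actually available for an arbitrary resolution (and what the paper uses, via \cite[Lemma~1.4]{Levine-5}) is a factorization in the \emph{opposite} direction: a blow-up $X'={\rm Bl}_Z(X)$ with $Z_{\rm red}=Y$ that \emph{dominates} $\wt{X}$, i.e.\ $X'\xrightarrow{\pi'}\wt{X}\xrightarrow{\pi}X$. The paper then notes that $\pi'$ is a projective birational morphism onto the regular scheme $\wt{X}$, hence itself a blow-up (\cite[Ex.~II.7.11]{Hartshorne-2}) whose center is supported on the divisor $E$ and must be Cartier, so $\pi'$ is an isomorphism. Consequently $\pi$ is \emph{literally} the blow-up of $X$ along a closed subscheme $Z$ with $Z_{\rm red}=Y$, and \propref{prop:pro-desc-CM-A} applies verbatim; no second bi-relative term ever arises. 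Trying to justify your factorization head-on essentially amounts to reproving this fact.

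Even granting the factorization, your treatment of the leftover term $K_n(X_0,\wt{X},E_0,\wt{E})$ does not go through as ``the same argument'': \propref{prop:pro-desc-CM-A} requires the base of the modification to be Cohen--Macaulay with isolated singularities and the center to be supported on its singular locus, and $X_0={\rm Bl}_Z(X)$ need satisfy neither---its singular locus can be positive-dimensional, and $g$ need not be a blow-up centered in $(X_0)_{\rm sing}$. You flag this as delicate bookkeeping, but it is the crux, and choosing $Z$ to be an lci reduction ideal does not resolve it (the lci blow-up is related to $\wt{X}$ by a finite morphism only once one knows $\wt{X}$ is the blow-up of the full ideal, which is what is at stake). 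The fix is to drop the interpolation and use the paper's observation that the given resolution is already such a blow-up.
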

\begin{proof}
By our assumption, $\pi\colon \wt{X} \to X$ is the blow-up of $X$ along a closed 
subscheme $Z \subset X$ with $Z_{\rm red} = Y$.
We can therefore apply \propref{prop:pro-desc-CM-A} to conclude the proof.
\end{proof}

\vskip .3cm

\begin{comment}
\subsection{Proof of \corref{cor:BSC-0-1}}
Set $X = \Spec(A)$ and
let $\pi\colon \wt{X} \to X$ denote the blow-up of $X$ at its vertex $P$.
It is easy to check that there is a projection map $p\colon \wt{X} \to Z$ which is
an affine bundle of rank one. 
Moreover, the 0-section of this affine bundle $\iota\colon Z \inj \wt{X}$
is also the exceptional divisor for $\pi$.
It particular, $\pi$ is a good resolution of singularities of $X$ with
reduced exceptional divisor $Z$.

The homotopy invariance implies that the 
pull-back map $\iota^*\colon K(\wt{X}) \to K(Z)$ is a homotopy 
equivalence of spectra. Equivalently, $K(\wt{X}, Z)$ is contractible.
In particular,  $F^dK_0(\wt{X}, Z) =0$.
It follows from \thmref{thm:Chowgroup} that $\CH^d(X) = 0$.
The last part of the corollary follows from the vanishing of
$\CH^d(X)$ and \cite[Theorem~3.7]{Murthy}.
$\hfill \square$
\end{comment}

%\enlargethispage{25pt}

\section{Divisibility of $SK_1$ of normal 
projective surfaces}\label{sec:nor-proj}
Recall that for any Noetherian scheme $X$, there is a natural map
$K_1(X) \to H^0(X, \sK_{1, X})$ and $SK_1(X)$ is defined to be the
kernel of this map.
Our goal in this section is to prove the divisibility property
of the group $SK_1(X)$ for a normal projective 
surface $X$.

Let $Z$ be a Noetherian scheme.
For $n \in \Z$ and closed subscheme $Z' \subset Z$, 
let $\sK_{n, (Z, Z')}$ denote the Zariski sheaf on $Z$ associated
to the presheaf $U \mapsto K_n(U, U \cap Z')$ (see end of 
\S~\ref{sec:Zero-C}). The Zariski sheaves of 
bi-relative $K$-theory $\sK_{n, (Z, W, Z')}$ on $Z$ are defined similarly.
Given a closed subscheme $W \subset Z$, the pull-back map 
$K_1(Z, W) \to K_1(U, W \cap U)$ for any open subset 
$U \subset Z$ defines a natural map 
${\rm det}_{Z|W}\colon K_1(Z, W) \to H^0(Z, \sK_{1, (Z, W)})$. 
We let $SK_1(Z,W) := {\rm Ker}({\rm det}_{Z|W})$ so that 
$SK_1(Z) = SK_1(Z, \emptyset)$.

The following result is an easy consequence of the Thomason-Trobaugh
spectral sequence.

\begin{lem}\label{lem:SK-1-exact}
Let $Z$ be a Noetherian scheme of dimension at most two and let
$W \subset Z$ be a closed subscheme. Then there is a natural
exact sequence
\begin{equation}\label{eqn:PNormal-0-ex}
H^2(Z, \sK_{3, (Z,W)}) \to SK_1(Z,W) \to H^1(Z, \sK_{2, (Z,W)}) \to 0.
\end{equation}
\end{lem}
\begin{proof}
Apply the Thomason-Trobaugh spectral sequence
$E^{p,q}_2 = H^p(X, \sK_{q, (Z,W)}) \Rightarrow K_{q-p}(Z,W)$
having differentials $d_r\colon E^{p,q}_r \to E^{p+r, q+r-1}_r$
and use the fact that the Zariski cohomological dimension of $Z$
is at most two.
\end{proof}

\subsection{$SK_1$ of normal projective surfaces}\label{sec:SK-Nor}
Let $k$ be an algebraically closed field of characteristic $p > 0$. 
This assumption will be used throughout the rest of \S~\ref{sec:nor-proj}.
Let $X$ be an irreducible normal projective surface over $k$
and let $Y \subset X$ denote its singular locus with the reduced induced
subscheme structure. We assume $Y \neq \emptyset$.
Using the resolution of singularities for surfaces,
we can find a resolution of singularities $\pi\colon \wt{X} \to X$ of
$X$ such that the reduced exceptional divisor $E \subset \wt{X}$ is 
a strict normal crossing divisor.

Since the map of Zariski sheaves $\sK_{2,X} \to \sK_{2,rY}$ is surjective
(both being same as the sheaves of corresponding Milnor $K_2$-groups),
there is an injection $\sK_{1, (X,rY)} \inj  \sK_{1, X}$.
Since $X$ is irreducible and projective over $k = \ov{k}$,
we have $H^0(X, \sK_{1, X}) \simeq k^{\times}$. It follows that
$H^0(X, \sK_{1, (X,rY)}) = 0$ (here we use $Y \neq \emptyset$). 
Similarly, we have 
$H^0(\wt{X}, \sK_{1, (\wt{X},rE)}) = 0$. 
It follows that 
\begin{equation}\label{eqn:SK-1-K1}
SK_1(X, rY) \xrightarrow{\simeq} K_1(X,rY)\ \ \mbox{and} \ \
SK_1(\wt{X}, rE) \xrightarrow{\simeq} K_1(\wt{X},rE) \ \ \forall \ r \ge 1.
\end{equation}

\begin{remk}
We remark that there is no need to assume $Y \neq \emptyset$
to prove the remaining results of this section, provided we know 
in general for a closed immersion $W \subset Z$ of
Noetherian schemes that there is a functorial decomposition 
$K_1(Z,W) \simeq SK_1(Z,W) \oplus H^0(Z, \sK_{1, (Z,W)})$.
But this is indeed known to be true and we give a sketch of its proof.
\end{remk}

\begin{lem}\label{lem:Rel-surj-K1}
The map ${\rm det}_{Z|W}\colon K_1(Z, W) \to H^0(Z, \sK_{1, (Z, W)})$ is split 
surjective
and the splitting is natural in the pair $(Z,W)$.
\end{lem}
\begin{proof}
We shall define a natural homomorphism
$\beta_{Z|W}\colon H^0(Z, \sK_{1, (Z, W)}) \to K_1(Z, W)$
such that ${\rm det}_{Z|W} \circ \beta_{Z|W}$ is identity.

We let $\G_m$ be the group scheme $\Spec(\Z[t^{\pm 1}])$ over $\Spec(\Z)$.
An element $f \in H^0(Z, \sK_{1, (Z, W)})$ is equivalent to a morphism
$f\colon Z \to \G_{m} \subset \A^1_{\Z}$ such that $f(W) \subset \{t = 1\}$. 
This gives rise to a commutative diagram with exact rows
\begin{equation}\label{eqn:Rel-surj-K1-0}
\xymatrix@C.8pc{
0 \ar[r] & K_1(\G_m, \{t = 1\}) \ar[r] \ar[d]_{f^*} & K_1(\G_m) \ar[d]^{f^*} 
\ar[r] & K_1(\{t = 1\}) \ar[d]^{f^*} \ar[r] & 0 \\
& K_1(Z,W) \ar[d] \ar[r] & K_1(Z) \ar[d]^{\delta} \ar[r] & K_1(W) \ar[d] & \\
0 \ar[r] & H^0(Z, \sK_{1, (Z, W)}) \ar[r] & H^0(Z, \sK_{1, Z}) \ar[r] &
H^0(W, \sK_{1, W}).}
\end{equation}

One can now check (as is well known)
that $\delta \circ f^*([t]) = f$. Since $[t] \in K_1(\G_m, \{1\})$, we see 
that $f^*([t]) \in K_1(Z,W)$ and hence $\delta \circ f^*([t])$ dies in
$H^0(W, \sK_{1, W})$. It must therefore lie in $H^0(Z, \sK_{1, (Z, W)})$.
Letting $\beta_{Z|W}(f) = f^*([t]) \in K_1(Z,W)$, we get the desired
splitting of ${\rm det}_{Z|W}$. 
It is clear from the above construction that $\beta_{Z|W}$ is natural in 
$(Z,W)$.

To show the additivity of $\beta_{Z|W}$, let $f, g \in  
H^0(Z, \sK_{1, (Z,W)})$ and consider the maps
\begin{equation}\label{eqn:multi}
\xymatrix@C.8pc{
& \G_m & \\
X \ar[r]^/-1pc/{h} \ar[dr]_{g} \ar[ur]^{f} & \G_m \times \G_m \ar[r]^/.5pc/{\mu} 
\ar[d]^{p_2} \ar[u]_{p_1} & \G_m \\
& \G_m, & }
\end{equation}
where $p_1, \ p_2$ are the projections, $h(x) = (f(x), g(x))$ and
$\mu$ is the multiplication map. All these maps take $W$ to $\{t = 1\} \subset 
\G_m$ or to $(\{t = 1\} \times \{t = 1\}) \subset \G_m \times \G_m$.

Since $\mu$ is induced by the map on functions
$\mu_*\colon \Z[t^{\pm 1}] \to \Z[x^{\pm 1}, y^{\pm 1}]$, given by
$\mu_*(t) = xy$, we get
\[
\begin{array}{lllll}
\beta_{Z|W}(fg) & = & (fg)^*([t]) & = & (\mu \circ h)^*([t]) \\
& = & h^* \circ \mu^*([t]) & = & h^*([x] + [y]) \\
& = & (p_1 \circ h)^*([t]) & + & (p_2 \circ h)^*([t]) \\
& = & f^*([t]) + g^*([t]) & = & \beta_{Z|W}(f) + \beta_{Z|W}(g).
\end{array}
\]
It follows that $\beta_{Z|W}$ is a group homomorphism. 
\end{proof}

\vskip .3cm

We now return to our original pair $(X,Y)$ and prove:

\begin{lem}\label{lem:PNormal-SK1-A}
For every $r \ge 1$, the map
\[
{SK_1(X,rY)}/{p^{\infty}}  \to {SK_1(X)}/{p^{\infty}} 
\]
is an isomorphism.
\end{lem}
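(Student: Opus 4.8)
The plan is to analyze the natural map $\phi\colon SK_1(X,rY)\to SK_1(X)$ induced by $K_1(X,rY)\to K_1(X)$, and to show that it is \emph{surjective} with kernel satisfying $(\ker\phi)/p^\infty = 0$. Granting this, I would apply $-\otimes_\Z\Q_p/\Z_p$ to the short exact sequence $0\to\ker\phi\to SK_1(X,rY)\xrightarrow{\phi}SK_1(X)\to 0$: the resulting right-exact sequence reads $(\ker\phi)/p^\infty\to SK_1(X,rY)/p^\infty\to SK_1(X)/p^\infty\to 0$, and since $(\ker\phi)/p^\infty = 0$ and the preceding $\mathrm{Tor}$ term also maps into it, the map $SK_1(X,rY)/p^\infty\to SK_1(X)/p^\infty$ is an isomorphism. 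Two standing facts drive everything: because $X$ is a normal surface its singular locus $Y = X_{\sing}$ is a finite set of closed points, so $rY$ is a $0$-dimensional semilocal scheme with $SK_1(rY)=0$; and any bounded-exponent $p$-group, as well as any divisible group, is annihilated by $-\otimes_\Z\Q_p/\Z_p$.

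For surjectivity I would take $x\in SK_1(X)\subseteq K_1(X)$ and look at $\iota^*(x)\in K_1(rY)$ under the restriction $\iota^*\colon K(X)\to K(rY)$. Since the edge maps $K_1\to H^0(\sK_1)$ are natural, $\iota^*(x)$ dies in $H^0(rY,\sK_{1,rY})$, hence lies in $SK_1(rY)=0$; by the fiber sequence $K(X,rY)\to K(X)\xrightarrow{\iota^*}K(rY)$ it follows that $x$ lifts to some $\tilde x\in K_1(X,rY)$. With $\alpha_X$ as in \eqref{eqn:PNormal-0}, the identification of $H^0(X,\sK_{1,(X,rY)})$ with units on $X$ that are trivial along $rY$ (exactly as used in \lemref{lem:K1-split}) shows that the comparison map $b\colon H^0(X,\sK_{1,(X,rY)})\to H^0(X,\sK_{1,X})$ is injective. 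As $b(\alpha_X(\tilde x))$ equals the image of $x$ in $H^0(X,\sK_{1,X})$, which is $0$, injectivity gives $\alpha_X(\tilde x)=0$, so $\tilde x\in SK_1(X,rY)$ and $\phi(\tilde x)=x$.

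For the kernel, the injectivity of $b$ together with the map of short exact sequences from \eqref{eqn:PNormal-0} to the analogous sequence for $X$ (snake lemma), or directly the fiber sequence above, identifies $\ker\phi$ with $\cok\bigl(\iota^*\colon K_2(X)\to K_2(rY)\bigr)$. The decisive input is the nilpotent thickening $Y\hookrightarrow rY$: by \lemref{lem:p-fin-exp-nil} the relative groups $K_n(rY,Y)$ are $p$-primary of bounded exponent, so $\cok(K_2(rY)\to K_2(Y))$ embeds into $K_1(rY,Y)$ and is bounded $p$-torsion; being simultaneously a quotient of the divisible group $K_2(Y)=\prod_i K_2(k)$ (here $K_2$ of the algebraically closed field $k$ is divisible), it must vanish, so $K_2(rY)\twoheadrightarrow K_2(Y)$. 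Hence $\ker\phi=\cok(\iota^*\colon K_2(X)\to K_2(rY))$ surjects onto the divisible group $\cok(K_2(X)\to K_2(Y))$, with kernel a quotient of $\ker(K_2(rY)\to K_2(Y))\subseteq\im K_2(rY,Y)$, which is bounded $p$-torsion. Thus $\ker\phi$ is an extension of a divisible group by a bounded-exponent $p$-group, and the six-term $\mathrm{Tor}$/$\otimes$ sequence gives $(\ker\phi)/p^\infty=0$.

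The main obstacle, and the reason the statement is phrased with $-/p^\infty$ rather than as a genuine isomorphism, is precisely that $\ker\phi$ is \emph{not} torsion: it carries a large divisible contribution coming from $K_2(k)$. Bounded-exponent kernel and cokernel do not by themselves force an isomorphism after $-\otimes_\Z\Q_p/\Z_p$ (for example $\Z\xrightarrow{p}\Z$ is not an isomorphism modulo $p^\infty$), so the crux is to split $\ker\phi$ into a divisible part and a bounded $p$-torsion part, both invisible to $-/p^\infty$, by combining the divisibility of $K_2(k)$ in characteristic $p$ with the boundedness supplied by \lemref{lem:p-fin-exp-nil}. Establishing surjectivity of $\phi$ and injectivity of $b$ is comparatively routine once the $0$-dimensionality of $rY$ and the unit description of $H^0(X,\sK_{1,(X,rY)})$ are in place.
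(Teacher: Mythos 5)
Your proof is correct and follows essentially the same route as the paper: both reduce to the exact sequence $K_2(rY) \to SK_1(X,rY) \to SK_1(X) \to 0$ and then kill $K_2(rY)$ modulo $p^{\infty}$ by combining \lemref{lem:p-fin-exp-nil} (for the nilpotent part $K_2(rY,Y)$) with the divisibility of $K_2(Y)$ over the algebraically closed field. The only cosmetic differences are that the paper establishes surjectivity by a sheaf-level argument (the kernel and cokernel of $\sK_{n,(X,rY)}\to\sK_{n,X}$ are supported on the finite set $Y$) rather than your diagram chase through $SK_1(rY)=0$, and it uses the splitting $K_2(rY)=K_2(rY,Y)\oplus K_2(Y)$ instead of your surjection $K_2(rY)\surj K_2(Y)$ with bounded $p$-torsion kernel.
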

\begin{proof}
We first claim that the map $SK_1(X,rY) \to SK_1(X)$ is surjective.
Using \lemref{lem:SK-1-exact}, it suffices to show that the map
$H^i(X, \sK_{i+1, (X, rY)}) \to  H^i(X, \sK_{i+1, X})$ is surjective
for $i = 1,2$.
We have an exact sequence of Zariski sheaves
\[
\sK_{n+1, rY} \to \sK_{n, (X, rY)} \to \sK_{n,X} \to \sK_{n, rY}.
\]
In particular, the kernel and the cokernel of the middle arrow
are supported on $Y$. The desired surjectivity for the induced maps
on the cohomology groups follows easily (see for instance, 
\cite[Lemma~1.3]{PW}).
This proves the claim. 

Since $H^0(X, \sK_{1, (X, rY)}) =0$, the long exact sequence for the relative 
$K$-theory of the pair $(X, rY)$
%and the inclusion $H^0(X, \sK_{1, (X, rY)}) \inj H^0(X, \sK_{1,X})$
now gives us an exact sequence
\begin{equation}\label{eqn:PNormal-SK1-0}
K_2(rY) \to SK_1(X, rY) \to  SK_1(X) \to 0.
\end{equation} 

Next, we note that we can write $rY$ as a scheme-theoretically finite
sum $rY = \amalg_i rY_i$, where the structure map $Y_i \to \Spec(k)$
is an isomorphism for every $i$.
In particular, the inclusion $Y \subset rY$ has a section
and this yields a decomposition $K_2(rY) = K_2(rY, Y) \oplus K_2(Y)$.
It follows from \lemref{lem:p-fin-exp-nil} that $K_2(rY, Y)$ is a $p$-primary
torsion group of bounded exponent. Since ${\Q_p}/{\Z_p}$ is divisible,
we must have ${K_2(rY, Y)}/{p^{\infty}} = 0$.
On the other hand, $K_2(Y)$ is divisible and hence
${K_2(Y)}/{p^{\infty}} \simeq
{\underset{m}\varinjlim}\ {K_2(Y)}/{p^m} = 0$.
All of this uses that $k$ is algebraically closed.
It follows that the map ${SK_1(X, rY)/{p^{\infty}}} \to
{SK_1(X)}/{p^{\infty}}$ is an isomorphism.
\end{proof}

\begin{lem}\label{lem:SK1-div-smooth}
${SK_1(\wt{X})}/{p^{\infty}} = 0$.
\end{lem}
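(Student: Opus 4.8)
The plan is to pass from $SK_1$ to the Zariski cohomology of the sheaf $\sK_2$ via the coniveau spectral sequence, and then to exploit the characteristic-$p$ identification of $\sK_2$ modulo $p^m$ with logarithmic de Rham--Witt sheaves. First I would record, exactly as in ~\eqref{eqn:PNormal-0-ex} but now for the smooth surface $\wt{X}$ (so with empty singular locus), the exact sequence
\[
H^2(\wt{X}, \sK_3) \to SK_1(\wt{X}) \to H^1(\wt{X}, \sK_2) \to 0
\]
coming from the Thomason--Trobaugh spectral sequence. Since $-\otimes_{\Z}\Q_p/\Z_p$ is right exact, it suffices to kill both outer terms after applying $-/p^{\infty}$. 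The $\sK_3$-term is immediate: its Gersten resolution exhibits $H^2(\wt{X},\sK_3)$ as a quotient of $\bigoplus_{x \in \wt{X}^{(2)}} K_1(k(x)) = \bigoplus_x k(x)^{\times}$, and since $k = \ov{k}$ every residue field at a closed point equals $k$, so this group is divisible; hence $H^2(\wt{X},\sK_3)/p^{\infty} = 0$ and $SK_1(\wt{X})/p^{\infty} \cong H^1(\wt{X},\sK_2)/p^{\infty}$.

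Next I would analyze $H^1(\wt{X},\sK_2)$. Because $\Char k = p$, the Quillen sheaf $\sK_2$, which coincides with the Milnor $K$-theory sheaf, has no $p$-torsion: it injects into the constant sheaf $K_2(k(\wt{X}))$ supported at the generic point, and the latter is $p$-torsion-free (Milnor $K$-theory of a field of characteristic $p$ has no $p$-torsion). Thus $\times p^m : \sK_2 \to \sK_2$ is a sheaf monomorphism whose cokernel is $\sK_2/p^m \cong W_m\Omega^2_{\wt{X},\log}$ by Bloch--Kato--Gabber and \cite{GL}. The long exact cohomology sequence, together with Bloch's formula $H^2(\wt{X},\sK_2) = \CH^2(\wt{X})$, then gives short exact sequences, compatible in $m$,
\[
0 \to H^1(\wt{X},\sK_2)/p^m \to H^1(\wt{X}, W_m\Omega^2_{\wt{X},\log}) \to {}_{p^m}\CH^2(\wt{X}) \to 0,
\]
and passing to the colimit over $m$ (with transition maps induced by Verschiebung, matching multiplication by $p$ on the left and the inclusions on the right) yields
\[
0 \to H^1(\wt{X},\sK_2)/p^{\infty} \to \varinjlim_m H^1(\wt{X}, W_m\Omega^2_{\wt{X},\log}) \to {}_{p^{\infty}}\CH^2(\wt{X}) \to 0 .
\]

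It therefore remains to prove that the surjection $\varinjlim_m H^1(\wt{X}, W_m\Omega^2_{\wt{X},\log}) \to {}_{p^{\infty}}\CH^2(\wt{X})$ is injective, and this is the step I expect to be the main obstacle. I would attack it by computing the left-hand group independently: the vanishing results for mod-$p$ $K$-theory of \cite{GL}, the Gersten/Bloch--Ogus comparison of the Zariski cohomology of $W_m\Omega^2_{\log}$ with étale cohomology, and the duality for logarithmic de Rham--Witt cohomology of a smooth projective surface should together identify $\varinjlim_m H^1(\wt{X}, W_m\Omega^2_{\wt{X},\log})$ with the $p$-primary torsion ${}_{p^{\infty}}\Alb(\wt{X})$ of the Albanese, in such a way that the displayed map becomes the $p$-part of the Roitman torsion isomorphism ${}_{p^{\infty}}\CH^2(\wt{X}) \xrightarrow{\simeq} {}_{p^{\infty}}\Alb(\wt{X})$ of \cite{Roitman} and \cite{Milne}. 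Granting this, the map is an isomorphism, its kernel $H^1(\wt{X},\sK_2)/p^{\infty}$ vanishes, and hence $SK_1(\wt{X})/p^{\infty} = 0$. The delicate points will be the precise comparison between the Zariski group $\varinjlim_m H^1(\wt{X}, W_m\Omega^2_{\log})$ and the étale side, and checking that the resulting identification is compatible with the cycle and Albanese maps, so that it genuinely is the Roitman--Milne isomorphism rather than merely an abstract match of $p$-divisible groups of the same corank.
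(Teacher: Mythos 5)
Your proposal is correct and follows essentially the same route as the paper: reduce via the Thomason--Trobaugh/Gersten exact sequence to killing $H^2(\wt{X},\sK_{3,\wt{X}})/p^{\infty}$ (divisibility of $\oplus_x k(x)^{\times}$) and $H^1(\wt{X},\sK_{2,\wt{X}})/p^{\infty}$, the latter via the short exact sequence
$0 \to H^1(\wt{X},\sK_{2,\wt{X}})/p^{\infty} \to \varinjlim_m H^1(\wt{X},\sK_{2,\wt{X}}/p^m) \to {}_{p^{\infty}}\CH^2(\wt{X}) \to 0$.
The step you single out as the main obstacle --- injectivity of the surjection onto ${}_{p^{\infty}}\CH^2(\wt{X})$ --- is exactly what the paper handles by citing the proof of Milne's Roitman torsion theorem, whose content is precisely that this map $\tau_{\wt{X}}$ is an isomorphism, so your outline of reproving it via logarithmic de Rham--Witt duality and the Albanese is a (correct) unwinding of that citation rather than a genuinely different argument.
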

\begin{proof}
Using the exact sequence
\begin{equation}\label{eqn:SK1-div-smooth-0}
H^2(\wt{X}, \sK_{3, \wt{X}}) \to SK_1(\wt{X}) \to H^1(\wt{X}, \sK_{2, \wt{X}})
\to 0,
\end{equation}
it suffices to show that ${H^2(\wt{X}, \sK_{3, \wt{X}})}/{p^{\infty}} = 0 =
{H^1(\wt{X}, \sK_{2, \wt{X}})}/{p^{\infty}}$.

The Gersten resolution for $\sK_{3, \wt{X}}$ tells us that 
$H^2(\wt{X}, \sK_{3, \wt{X}})$ is the quotient of the group
${\underset{x \in {\wt{X}}^{(2)}} \oplus} \ K_1(k(x))$, which is clearly
divisible (since $k = \ov{k}$). This yields 
${H^2(\wt{X}, \sK_{3, \wt{X}})}/{p^{\infty}} = 0$. The vanishing of
${H^1(\wt{X}, \sK_{2, \wt{X}})}/{p^{\infty}}$ follows from
the exact sequence
\begin{equation}\label{eqn:SK1-div-smooth-1}
0 \to {H^1(\wt{X}, \sK_{2, \wt{X}})}/{p^{\infty}} \to
{\underset{m}\varinjlim} \ H^1(\wt{X}, {\sK_{2, \wt{X}}}/{p^m}) 
\xrightarrow{\tau_{\wt{X}}}
{_{p^{\infty}}\CH^2(\wt{X})} \to 0
\end{equation}
and the Roitman torsion theorem for $\wt{X}$ from \cite{Milne}, whose proof 
involves showing that
$\tau_{\wt{X}}$ is an isomorphism. We refer to \cite[\S~6, p.~281]{Milne}
for this part of the argument in the proof of the Roitman torsion theorem.
\end{proof}

\begin{lem}\label{lem:SK1-rel-red}
${SK_1(\wt{X}, E)}/{p^{\infty}} = 0$.
\end{lem}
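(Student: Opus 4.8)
The plan is to compare the relative group $SK_1(\wt X, E)$ with the absolute group $SK_1(\wt X)$, whose $p^\infty$-divisibility is already recorded in \lemref{lem:SK1-div-smooth}, and to bound the discrepancy by $K$-groups of the exceptional curve $E$. The comparison comes from the localization (homotopy fiber) sequence $K(\wt X, E) \to K(\wt X) \to K(E)$, which on homotopy groups gives
\[
K_2(E) \xrightarrow{\partial} K_1(\wt X, E) \to K_1(\wt X) \xrightarrow{\mathrm{res}} K_1(E).
\]
Since every class in $\im(\partial)$ dies in $K_1(\wt X)$, hence in $H^0(\wt X, \sK_{1,\wt X})$, and the comparison $H^0(\wt X, \sK_{1,(\wt X, E)}) \to H^0(\wt X, \sK_{1,\wt X})$ is the evident map, I would first check (exactly as in the proof of \lemref{lem:PNormal-SK1-A}) that $\im(\partial) \subseteq SK_1(\wt X, E)$ and that $SK_1(\wt X, E) \to SK_1(\wt X)$ has kernel precisely $\im(\partial)$. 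A snake-lemma comparison of the two exact sequences \eqref{eqn:PNormal-0}, for $(\wt X, E)$ and for $\wt X$, then shows that the cokernel of $SK_1(\wt X, E) \to SK_1(\wt X)$ is, up to a contribution coming from $H^0(\wt X, \sK_{2,\wt X}) \to H^0(E, \sK_{2,E})$, a subgroup of $\im(\mathrm{res}) \subseteq K_1(E)$.

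Next I would apply $- \otimes_{\Z} \Q_p/\Z_p$, which is right exact, and invoke $SK_1(\wt X)/p^{\infty} = 0$ from \lemref{lem:SK1-div-smooth}. The kernel $\im(\partial)$ is a quotient of $K_2(E)$, so it contributes nothing once $K_2(E)/p^{\infty} = 0$; the cokernel contributes nothing once its $p$-primary torsion vanishes, for which it suffices that ${}_{p^{\infty}}K_1(E) = 0$ (the torsion of the auxiliary $K_2$-term being handled by the same curve input). Thus the lemma reduces to the two statements about the reduced strict normal crossing projective curve $E$ over $k = \ov{k}$:
\[
K_2(E)/p^{\infty} = 0 \qquad \text{and} \qquad {}_{p^{\infty}}K_1(E) = 0.
\]

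To prove these I would first treat the smooth case and then descend the singularities. If $C$ is a smooth projective curve over $\ov{k}$, then Geisser--Levine \cite{GL} identifies $K_*(C; \Z/p^m)$ with logarithmic de Rham--Witt cohomology, and for dimension reasons (a curve carries $W_m\Omega^i_{\log}$ only for $i \le 1$ and has Zariski cohomological dimension $1$) one obtains $K_2(C; \Z/p^m) = 0$ for every $m$. The universal coefficient sequence $0 \to K_2(C)/p^m \to K_2(C; \Z/p^m) \to {}_{p^m}K_1(C) \to 0$ then forces both $K_2(C)/p^m = 0$ and ${}_{p^m}K_1(C) = 0$, giving the two claims for $C$. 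For the nodal curve $E$, its normalization $\nu : E^N \to E$ is a disjoint union of such smooth curves and fits into a finite abstract blow-up square with conductor the reduced node set $S \subset E$ and $\tilde S = S \times_E E^N$. By \propref{prop:fini-exp-fin} the bi-relative groups $K_n(E, E^N, S, \tilde S)$ are $p$-primary of bounded exponent, hence vanish after $\otimes \Q_p/\Z_p$; combined with the fact that $S$ and $\tilde S$ are finite sets of $\ov{k}$-points (so their $K$-groups are divisible and, since $\Char(k) = p$, have no $p$-torsion), this transfers $K_2(-)/p^{\infty} = 0$ and ${}_{p^{\infty}}K_1(-) = 0$ from $E^N$ to $E$.

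The main obstacle I expect is exactly this transfer from the smooth normalization to the singular curve. The integral bi-relative groups are bounded $p$-torsion, so although $\otimes \Q_p/\Z_p$ annihilates them their own torsion does not vanish, and establishing ${}_{p^{\infty}}K_1(E) = 0$ requires threading the $\mathrm{Tor}$ terms through the long exact sequence of the conductor square with care, rather than merely the $\Q_p/\Z_p$-tensored sequence. The secondary delicate point is the cokernel bookkeeping of the first step: verifying that $SK_1(\wt X, E) \to SK_1(\wt X)$ has cokernel whose $p$-primary torsion is controlled by ${}_{p^{\infty}}K_1(E)$, which is what allows me to avoid proving the (non-obvious) surjectivity of $H^1(\wt X, \sK_{2,(\wt X, E)}) \to H^1(\wt X, \sK_{2,\wt X})$.
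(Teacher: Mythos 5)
Your reduction of the lemma to the two statements $K_2(E)/p^{\infty}=0$ and ${}_{p^{\infty}}K_1(E)=0$ is sound, and it is worth noting that these two statements are, by the universal coefficient sequence, jointly equivalent to $K_2(E;\Z/p^{\infty})=0$ --- which is exactly what the paper proves. The paper, however, only uses the first consequence $K_2(E)/p^{\infty}=0$; for the cokernel it does not prove ${}_{p^{\infty}}SK_1(E)=0$ but instead cites \cite[Lemma~7.7]{KSri}, which gives surjectivity of ${}_{p^{\infty}}SK_1(\wt{X})\to{}_{p^{\infty}}SK_1(E)$ and hence kills the Tor term in the sequence $0\to F_1\to SK_1(\wt{X})\to F_2\to 0$. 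Your substitution of that external input by the direct vanishing ${}_{p^{\infty}}K_1(E)=0$ is legitimate and would make the argument more self-contained; so the real question is whether you can actually prove the two curve statements.

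Here there is a genuine gap, and it is exactly where you flagged it. Your plan for the descent from $E^N$ to $E$ rests on the bi-relative groups $K_n(E,E^N,S,\wt{S})$ being $p$-primary of bounded exponent. That suffices for statements of the form $(-)\otimes\Q_p/\Z_p=0$ only on the nose of the group itself, but boundedness is useless for the torsion statement: a nonzero bounded $p$-torsion group $K_1(E,E^N,S,\wt{S})$ sitting in the sequence $K_2(E^N,\wt{S})\to K_1(E,E^N,S,\wt{S})\to K_1(E,S)\to K_1(E^N,\wt{S})$ would feed $p$-torsion directly toward $K_1(E)$, and no amount of ``threading Tor terms'' removes it. The missing ingredient is the Geller--Weibel computation $K_1(A,B,I)\cong I/I^2\otimes_{A/I}\Omega^1_{(B/I)/(A/I)}$ \cite[Theorem~0.2]{GW}, which vanishes here because the conductor $I$ is reduced and $B/I\cong k^r$; this is the only way to kill that obstruction. (A parallel issue afflicts your $K_2(E)/p^{\infty}=0$ transfer: the image of $K_2(E)$ in $K_2(E^N)$ is merely a subgroup of a group with vanishing $p$-divisible quotient, and subgroups do not inherit that property; you again need a Tor/torsion input on the cokernel side.) The paper sidesteps all of this by working with $\Z/p^m$-coefficients throughout: after localizing to the affine case, \cite[Theorem~8.4]{GL} identifies $K_2(A;\Z/p^m)$ with the bi-relative group $K_2(A,B,I;\Z/p^m)$, and the latter vanishes with $\Z/p^{\infty}$-coefficients precisely because $K_2(A,B,I)$ is bounded $p$-torsion \emph{and} $K_1(A,B,I)=0$ by Geller--Weibel. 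If you reorganize your argument to prove the single statement $K_2(E;\Z/p^{\infty})=0$ with finite coefficients and add the Geller--Weibel input, your route closes up and in fact yields ${}_{p^{\infty}}K_1(E)=0$ for free, eliminating the need for \cite[Lemma~7.7]{KSri}. As written, the proposal does not close.
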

\begin{proof}
For $r \ge 1$, we have a commutative diagram
\[
\xymatrix@C.8pc{
& SK_1(\wt{X}, rE) \ar[r] \ar[d] & SK_1(\wt{X}) \ar[d] \ar[r] 
& SK_1(rE) \ar[d] \\
K_2(rE) \ar[r] & K_1(\wt{X}, rE) \ar[r] & K_1(\wt{X}) \ar[r] &
K_1(rE),}
\]
where the vertical arrows are injective.
Since $H^0(X, \sK_{1, (\wt{X}, rE)}) = 0$, 
we get an exact sequence
%It follows from the long exact relative $K$-theory sequence and
%\cite[Proposition~2.6]{KSri} that for each $r \ge 1$, 
%there is an exact sequence
\begin{equation}\label{eqn:SK1-rel-red-0}
K_2(rE) \to SK_1(\wt{X}, rE) \to SK_1(\wt{X}) \to SK_1(rE).
%\to F^2K_0(\wt{X}, rE) \to F^2K_0(\wt{X}) \to 0.
\end{equation}
For $r = 1$, we break this exact sequence into the smaller exact sequences
\[
K_2(E) \to SK_1(\wt{X}, E) \to F_1 \to 0;
\]
\[
0 \to F_1 \to  SK_1(\wt{X}) \to F_2 \to 0;
\]
\[
0 \to F_2 \to SK_1(E) \to F_3 \to 0.
\]

If we let $\{E_1, \cdots , E_r\}$ denote the irreducible components
of $E$, we get maps $SK_1(E) \to \stackrel{r}{\underset{i = 1}\oplus} \
SK_1(E_i) \to (k^{\times})^{\oplus r}$, where the first map is the
sum of restrictions to components and the second map
is the sum of the push-forward maps $SK_1(E_i) \inj K_1(E_i) 
\to K_1(\Spec(k)) = k^{\times}$. It follows from 
\cite[Lemma~7.6]{KSri} that the composite map
$SK_1(E) \to (k^{\times})^{\oplus r}$ is isomorphism on the torsion subgroups.
On the other hand, it is shown in the proof of 
\cite[Lemma~7.7]{KSri} that the composite map
\[
\Pic(\wt{X}) \otimes_{\Z} k^{\times} \xrightarrow{\cup} SK_1(\wt{X}) \to
SK_1(E) \to (k^{\times})^{\oplus r}
\]
is surjective on the torsion subgroups.
It follows that restriction map $SK_1(\wt{X}) \to SK_1(E)$ induces a 
surjection ${_{p^{\infty}}SK_1(\wt{X})} \surj {_{p^{\infty}}SK_1(E)}$.

%It is shown in \cite[Lemma~7.7]{KSri} that the map
%$SK_1(\wt{X}) \to SK_1(E)$ induces a surjective map
%${_{p^{\infty}}SK_1(\wt{X})} \surj {_{p^{\infty}}SK_1(E)}$. 
In particular, we get ${_{p^{\infty}}SK_1(\wt{X})} \surj {_{p^{\infty}}F_2}$,
which in turn shows that ${F_1}/{p^{\infty}} \inj
{SK_1(\wt{X})}/{p^{\infty}}$. \lemref{lem:SK1-div-smooth} implies 
that ${F_1}/{p^{\infty}} = 0$. To prove the lemma, we are now left with
showing that ${K_2(E)}/{p^{\infty}} = 0$.
Using the universal coefficient exact sequence
\begin{equation}\label{eqn:SK1-rel-red-1}
0 \to {K_n(E)}/{p^{\infty}} \to K_n(E; {\Z}/{p^{\infty}}) \to
{_{p^{\infty}}K_{n-1}(E)} \to 0,
\end{equation}
it suffices to show that $K_2(E; {\Z}/{p^{\infty}}) = 0$.

Let $S \subset E$ be a finite closed subset contained in the smooth locus
of $E$ such that $E \setminus S$ is affine. We then have 
the Thomason-Trobaugh localization exact sequence
\[
K^S_2(E; {\Z}/{p^m}) \to K_2(E; {\Z}/{p^m}) \to  
K_2(E \setminus S; {\Z}/{p^m}).
\]
Since $S$ lies in the smooth locus of $E$, we can replace 
$K^S_2(E; {\Z}/{p^m})$
by $K^S_2(E_{\rm reg} ; {\Z}/{p^m})$. On the other hand, as $E_{\rm reg}$
and $S$ are smooth, the maps $K_2(S; {\Z}/{p^m}) \to K'_2(S; {\Z}/{p^m})
\to K^S_2(E_{\rm reg} ; {\Z}/{p^m})$ are isomorphisms using the localization
sequence for the Quillen $K$-theory $K'(-)$. We thus have an exact sequence
\begin{equation}\label{eqn:extra*}
K_2(S; {\Z}/{p^m}) \to K_2(E; {\Z}/{p^m}) \to  K_2(E \setminus S; {\Z}/{p^m}).
\end{equation}

We now observe that ~\eqref{eqn:SK1-rel-red-1} holds even when we replace $E$ 
by $S$.  If we apply this for $S$ when $n = 2$, the
assumption that $k$ is algebraically closed of characteristic $p > 0$
implies that the first term of ~\eqref{eqn:extra*} is zero. 
We can therefore assume that $E$ is affine.

We let $E = \Spec(A), \ B = \Spec(A^N)$ and let $I \subset A$ denote the
ideal of $E_{\rm sing}$. Since $E$ is a strict normal crossing divisor
on a smooth surface, $I$ is in fact a conducting ideal for the normalization
map $A \to B$.
We now have a commutative diagram of exact sequences
\begin{equation}\label{eqn:SK1-rel-red-2}
\xymatrix@C.6pc{
K_4(B/I; {\Z}/{p^m}) \ar[r] & K_3(B, I; {\Z}/{p^m}) \ar[d] \ar[r] & 
K_3(B; {\Z}/{p^m}) & \\
& K_2(A,B,I; {\Z}/{p^m}) \ar[d] & & \\
K_3(A/I; {\Z}/{p^m}) \ar[r] \ar[d] & K_2(A, I; {\Z}/{p^m}) \ar[r] \ar[d] &
K_2(A; {\Z}/{p^m}) \ar[r] \ar[d] &  K_2(A/I; {\Z}/{p^m}) \ar[d] \\
K_3(B/I; {\Z}/{p^m}) \ar[r] & K_2(B, I; {\Z}/{p^m}) \ar[r] &
K_2(B; {\Z}/{p^m}) \ar[r] &  K_2(B/I; {\Z}/{p^m}).}
\end{equation}

Since $B, A/I, B/I$ are all smooth of dimension at most one, it follows
from \cite[Theorem~8.4]{GL} that $K_2(A,B,I; {\Z}/{p^m}) \xrightarrow{\simeq} 
K_2(A, I; {\Z}/{p^m}) 
\xrightarrow{\simeq} K_2(A; {\Z}/{p^m})$ for each $m \ge 1$.
It therefore suffices to show that $K_2(A,B,I; {\Z}/{p^{\infty}}) = 0$.

To prove this, we use the short exact sequence
\[
0 \to {K_2(A,B,I)}/{p^{\infty}} \to K_2(A,B,I; {\Z}/{p^{\infty}}) \to
{_{p^{\infty}}K_{1}(A,B,I)} \to 0.
\]
It follows from \cite[Theorem~C]{GH-1} (see \lemref{lem:p-fin-exp}) that 
$K_2(A,B,I)$ is a $p$-primary
torsion group of bounded exponent. Since ${\Q_p}/{\Z_p}$ is divisible,
we must have ${K_2(A,B,I)}/{p^{\infty}} = 0$.
On the other hand, 
$K_{1}(A,B,I) \simeq {I}/{I^2} \otimes_{B/I} \Omega^1_{{(B/I)}/{(A/I)}}$ by 
\cite[Theorem~0.2]{GW}. 
Since $B/I \simeq k^r$ for some $r \ge 1$, we have $\Omega^1_{{(B/I)}/{(A/I)}} =0$.
We conclude that $K_2(A,B,I; {\Z}/{p^{\infty}}) = 0$.
This finishes the proof.
\end{proof}

\begin{lem}\label{lem:SK1-rel-nonred}
For every $r \ge 1$, we have ${SK_1(\wt{X}, rE)}/{p^{\infty}} = 0$.
\end{lem}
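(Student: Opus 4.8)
The plan is to bootstrap from the already-settled reduced case $r=1$ (\lemref{lem:SK1-rel-red}) by comparing the pair $(\wt{X}, rE)$ with $(\wt{X}, E)$ across the infinitesimal thickening $E \subset rE$, recalling that $\sI_{rE} = \sI_E^r$. The inclusion $E \hookrightarrow rE$ induces, by naturality of the exact sequence \eqref{eqn:PNormal-0}, a morphism from the sequence \eqref{eqn:PNormal-0} for $(\wt{X}, rE)$ to the corresponding one for $(\wt{X}, E)$, the three vertical arrows being the evident comparison maps. Applying the snake lemma, it will suffice to show that the kernel and cokernel of the middle arrow $K_1(\wt{X}, rE) \to K_1(\wt{X}, E)$ and of the right arrow $H^0(\wt{X}, \sK_{1,(\wt{X}, rE)}) \to H^0(\wt{X}, \sK_{1,(\wt{X}, E)})$ are $p$-primary torsion of bounded exponent; the same will then follow for the left arrow $\phi \colon SK_1(\wt{X}, rE) \to SK_1(\wt{X}, E)$.

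For the middle arrow, the nested closed immersions $E \subset rE \subset \wt{X}$ give a homotopy fibre sequence of relative $K$-theory spectra whose long exact sequence reads
\[
\cdots \to K_2(rE, E) \to K_1(\wt{X}, rE) \to K_1(\wt{X}, E) \to K_1(rE, E) \to \cdots .
\]
Since $(rE)_{\rm red} = E$, the ideal sheaf of $E$ in $\sO_{rE}$ is nilpotent, so \lemref{lem:p-fin-exp-nil} applied to $rE \in \Sch_k$ shows that every $K_n(rE, E)$ is $p$-primary of bounded exponent; hence the kernel and cokernel of the middle arrow are of this type. For the right arrow, $H^0(\wt{X}, \sK_{1,(\wt{X}, rE)})$ is the group of global units of $\sO_{\wt{X}}$ congruent to $1$ modulo $\sI_E^r$, and the comparison map is the inclusion into those congruent to $1$ modulo $\sI_E$; it is therefore injective, and its cokernel injects into $H^0$ of the multiplicative sheaf $1 + \sI_E/\sI_E^r$, which is supported on $E$. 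As $(\sI_E/\sI_E^r)^r = 0$ and, in characteristic $p$, $(1+x)^{p^m} = 1 + x^{p^m}$, this sheaf is annihilated by $p^m$ for any $p^m \ge r$; hence the cokernel of the right arrow is $p$-primary of bounded exponent.

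Combining these through the snake lemma shows that $\ker(\phi)$ and $\cok(\phi)$ are $p$-primary of bounded exponent. To conclude, put $I = \im(\phi)$ and apply $-\otimes_{\Z} \Q_p/\Z_p$ (using $\Tor_1(M, \Q_p/\Z_p) = {_{p^{\infty}}M}$) to the short exact sequences $0 \to \ker\phi \to SK_1(\wt{X}, rE) \to I \to 0$ and $0 \to I \to SK_1(\wt{X}, E) \to \cok\phi \to 0$. Because $\ker\phi$ has bounded exponent, ${\ker\phi}/{p^{\infty}} = 0$, so $SK_1(\wt{X}, rE)/{p^{\infty}} \xrightarrow{\simeq} I/{p^{\infty}}$; and because $SK_1(\wt{X}, E)/{p^{\infty}} = 0$ by \lemref{lem:SK1-rel-red}, the group $I/{p^{\infty}}$ is a quotient of ${_{p^{\infty}}(\cok\phi)} = \cok\phi$, hence of bounded exponent. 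Thus $SK_1(\wt{X}, rE)/{p^{\infty}}$ is at once divisible (being a tensor product with $\Q_p/\Z_p$) and of bounded exponent, and so vanishes. I expect the only real work to lie in the middle paragraph — confirming that everything introduced in passing from $E$ to $rE$ is $p$-bounded, on the $K_1$ side via \lemref{lem:p-fin-exp-nil} and on the $H^0$ side via the characteristic-$p$ computation for $1 + \sI_E/\sI_E^r$ — after which the divisibility deduction is purely formal.
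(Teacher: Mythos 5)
Your argument is correct and follows essentially the same route as the paper: both proofs reduce to the case $r=1$ by producing an exact sequence $0 \to A \to SK_1(\wt{X},rE) \to SK_1(\wt{X},E) \to B \to 0$ with $A$, $B$ $p$-primary of bounded exponent (via \lemref{lem:p-fin-exp-nil} applied to the nilpotent thickening $E \subset rE$), and then conclude by the same divisible-versus-bounded-exponent tension after tensoring with $\Q_p/\Z_p$ (the paper phrases this with the localization $\Z[p^{-1}]/\Z \simeq \Q_p/\Z_p$, you with $\Tor_1(-,\Q_p/\Z_p)$, which is the same computation). The only difference is cosmetic: you obtain the four-term sequence by a snake-lemma comparison of the two sequences \eqref{eqn:PNormal-0}, which makes you spell out the Frobenius bound on $1+\sI_E/\sI_E^r$ controlling the $H^0$-terms, a detail the paper absorbs into its assertion that the top row of its displayed diagram is exact.
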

\begin{proof}
The case $r = 1$ follows from \lemref{lem:SK1-rel-red}. So we assume
$r \ge 2$.
Using the long exact relative $K$-theory sequence, we get a commutative diagram 
\[
\xymatrix@C.8pc{
& SK_1(\wt{X}, rE) \ar[r] \ar[d] & SK_1(\wt{X}, E) \ar[r] \ar[d] &
SK_1(rE, E) \ar[d] \\
K_2(rE, E) \ar[r] & K_1(\wt{X}, rE) \ar[r] & K_1(\wt{X}, E) \ar[r] &
K_1(rE, E),}
\]
where the vertical arrows are all injective and the bottom row is exact.
It follows from ~\eqref{eqn:SK-1-K1} that the two vertical arrows from 
the left are isomorphisms. It follows that the top row is also exact.

Using \lemref{lem:p-fin-exp-nil}, we get an exact sequence
\[
0 \to A \to SK_1(\wt{X}, rE) \to SK_1(\wt{X}, E) \to B \to 0,
\]
where $A$ and $B$ are $p$-primary torsion groups of bounded exponents.
Setting $\ov{SK_1(\wt{X}, rE)}:= {SK_1(\wt{X}, rE)}/{A}$, the 
divisibility of ${\Q_p}/{\Z_p}$ implies that
\begin{equation}\label{eqn:SK1-rel-nonred-0}
{SK_1(\wt{X}, rE)}/{p^{\infty}} \xrightarrow{\simeq} 
{\ov{SK_1(\wt{X}, rE)}}/{p^{\infty}}.
\end{equation}

We now consider a commutative diagram of exact sequences
\[
\xymatrix@C.8pc{
0 \ar[r] & \ov{SK_1(\wt{X}, rE)} \ar[r] \ar[d] & 
SK_1(\wt{X}, E) \ar[r] \ar[d] & B \ar[d] \ar[r]  & 0 \\
0 \ar[r] & \ov{SK_1(\wt{X}, rE)}[p^{-1}] \ar[r] &
SK_1(\wt{X}, E)[p^{-1}] \ar[r] & B[p^{-1}] \ar[r] & 0,}
\]
where the vertical arrows are the localizations.

Since $B$ is $p$-primary torsion, the vertical arrow on the
right is zero. Combining this with \lemref{lem:SK1-rel-red} and the isomorphism 
${\Z[p^{-1}]}/{\Z} \simeq {\underset{m}\varinjlim}\ 
{\Z}/{p^m} \simeq {\Q_p}/{\Z_p}$, we get a surjection
$B \surj \ov{SK_1(\wt{X}, rE)} \otimes_{\Z} {\Z[p^{-1}]}/{\Z}
\simeq \ov{SK_1(\wt{X}, rE)} \otimes_{\Z} {\Q_p}/{\Z_p}$.
It follows that ${\ov{SK_1(\wt{X}, rE)}}/{p^{\infty}}$ is a divisible
group of bounded exponent, and hence it must be zero.
We conclude from ~\eqref{eqn:SK1-rel-nonred-0}
that ${SK_1(\wt{X}, rE)}/{p^{\infty}} = 0$.
\end{proof}

\begin{lem}\label{lem:div-H3K}
Let $X$ be a reduced quasi-projective surface over an algebraically closed
field $k$ of exponential characteristic $p \ge 1$. 
Assume that $X$ has only an isolated set of singular points.
Then $H^2(X, \sK_{3,X})$ is divisible by every integer $m$ prime to $p$.
In particular, ${H^2(X, \sK_{3,X})} \otimes_{\Z} {\Q_l}/{\Z_l} = 0$ for every
prime $l \neq p$.
\end{lem}
\begin{proof}
For any point $x \in X$ (not necessarily closed), 
let $X_x$ denote $\Spec(\sO_{X,x})$ and let
$X^o_x = X_x \setminus \{x\}$.
Let $X^{(i)}$ denote the set of codimension $i$ points on $X$.
For a Zariski sheaf $\sF$ on $X$ and a point $x \in X$, 
recall that $H^i_x(X, \sF)$
is defined as the colimit ${\underset{U}\varinjlim} \
H^i_{\ov{\{x\}} \cap U}(U, \sF|_U)$, where the limit is over all
open neighborhoods of $x$ in $X$.

The filtration by codimension of support gives rise to the coniveau spectral
sequence $E^{i,j}_1 = {\underset{x \in  X^{(i)}}\amalg} H^{i+j}_{x}(X, \sF)
\Rightarrow H^{i+j}(X, \sF)$. 
Since $H^{i}_{x}(X, \sF) \cong H^{i}_{x}(X_x, \sF)$
for every $x \in X$ and $i \ge 0$ by excision, and 
since $\dim(X^o_x) = i-1$ for every $x \in X^{(i)}$,
the above spectral sequence implies that the `forget support'
map ${\underset{x \in X^{(2)}}\oplus} H^2_x(X, \sF) \to H^2(X, \sF)$
is surjective. It suffices therefore to show that 
$H^2_x(X_x, \sK_{3,X})$ is $m$-divisible for every integer $m$ prime to $p$ and
every closed point $x \in X$. 

Suppose first that $x$ is a regular point. In this case,
the Gersten resolution for $\sK_{3,X}$ on $X_x$ implies
that $H^2_x(X_x, \sK_{3,X}) \simeq K_1(k(x)) \simeq k^{\times}$.
Our divisibility claim follows in this case.

Suppose next that $x$ is one of the isolated singular points of $X$.
In this case, we can use the localization exact sequence
$H^1(X^o_x,  \sK_{3,X}) \to H^2_x(X_x, \sK_{3,X}) \to
H^2(X_x, \sK_{3,X}) = 0$ to reduce the problem to showing that
$H^1(X^o_x,  \sK_{3,X})$ is $m$-divisibile.

Now, our assumption says that $X_x$ is a 2-dimensional local ring which 
is regular away from its closed point $x$. In particular, 
$X^o_x$ is regular and has dimension one.
We can therefore apply the Gersten sequence, which tells us that
there is a surjection ${\underset{x \in X^{(1)}_x}\oplus} K_2(k(x))
\surj H^1(X^o_x,  \sK_{3,X})$.
On the other hand, there is a Norm-residue isomorphism
${K_2(k(x))}/{m} \xrightarrow{\simeq} H^2_{\et}(k(x), \mu_m(2))$.
But the latter group vanishes since $k = \ov{k}$ and
${\rm tr. \ deg}_k(k(x)) = 1$.
We have thus shown that ${H^2_x(X_x, \sK_{3,X})}/m = 0$.
This finishes the proof.
\end{proof}

\begin{remk}\label{remk:higher-dim}
The reader can check from its proof that \lemref{lem:div-H3K} is valid
for $H^d(X, \sK_{d+1,X})$ for
any reduced quasi-projective scheme $X$ of dimension $d \ge 0$ over $k$ with
isolated singularities.
\end{remk}

\begin{thm}\label{thm:normal-vanishing}
Let $X$ be a normal projective surface over an algebraically closed
field $k$. Then
\[
{SK_1(X)} \otimes_{\Z} {\Q}/{\Z} = 0 = 
{H^1(X, \sK_{2,X})} \otimes_{\Z} {\Q}/{\Z}.
\]
\end{thm}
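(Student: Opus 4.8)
The plan is to show that $SK_1(X)$ is divisible, which is exactly the first asserted vanishing, and then to obtain the second vanishing formally from the surjection in \eqref{eqn:PNormal-0-ex}. Since a normal scheme is the disjoint union of its connected (hence integral) components, I may assume $X$ is irreducible, so that the resolution $\pi\colon \wt X \to X$ with strict normal crossing exceptional divisor $E$ and singular locus $Y$ set up in \S\ref{sec:nor-proj} is in force. Writing $A \otimes_\Z \Q/\Z = \oplus_l A/l^\infty$ over all primes $l$, it suffices to prove $SK_1(X)/l^\infty = 0$ for each $l$.

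First I would treat $l = p$, where the lemmas of this section are used. By \lemref{lem:PNormal-SK1-A} the map $SK_1(X, rY)/p^\infty \to SK_1(X)/p^\infty$ is an isomorphism, so it is enough to show $SK_1(X, rY)/p^\infty = 0$ for $r \gg 1$. By \lemref{lem:K1-split} the pullback $\pi^*$ realizes $SK_1(X, rY)$ as a subgroup of $SK_1(\wt X, rE)$ with quotient $Q_r$, and $Q_r$ embeds into $K_1(\wt X, rE)/K_1(X, rY)$, which in turn embeds via the long exact bi-relative sequence into $K_0(X, \wt X, rY, rE)$. Since a normal surface is Cohen--Macaulay with isolated singularities, \thmref{thm:Desc-CM-Main-A} together with \lemref{lem:fini-exp-nil} shows this bi-relative group, and hence $Q_r$, is $p$-primary torsion of bounded exponent. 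Applying $-\otimes_\Z \Q_p/\Z_p$ to $0 \to SK_1(X, rY) \to SK_1(\wt X, rE) \to Q_r \to 0$, using $\Tor_1^\Z(Q_r, \Q_p/\Z_p) = Q_r$ (as $Q_r$ is $p$-primary torsion) and the vanishing $SK_1(\wt X, rE)/p^\infty = 0$ of \lemref{lem:SK1-rel-nonred}, exhibits $SK_1(X, rY)/p^\infty$ as a quotient of the bounded-exponent group $Q_r$. But $SK_1(X, rY)/p^\infty$ is of the form $M \otimes_\Z \Q_p/\Z_p$, hence divisible, and a divisible group of bounded exponent is zero; thus $SK_1(X, rY)/p^\infty = 0$.

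For $l \neq p$ (and, in characteristic $0$, for every $l$) I would reduce the comparison to the smooth surface $\wt X$. All the relative and bi-relative correction terms occurring above, namely the bi-relative $K$-groups and the terms $K_2(rY, Y)$, $K_2(rE, E)$, are $p$-primary torsion by \S\ref{sec:boundedness}, hence uniquely $l$-divisible with no $l$-torsion. Consequently the argument of \lemref{lem:PNormal-SK1-A} applies verbatim with $p$ replaced by $l$, giving $SK_1(X, rY)/l^\infty \cong SK_1(X)/l^\infty$, and the short exact sequence of the previous paragraph now yields an isomorphism $SK_1(X, rY)/l^\infty \cong SK_1(\wt X, rE)/l^\infty$ because $\Tor_1^\Z(Q_r, \Q_l/\Z_l) = 0 = Q_r/l^\infty$. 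It therefore suffices to establish the prime-to-$p$ analogue $SK_1(\wt X, rE)/l^\infty = 0$: the absolute statement $SK_1(\wt X)/l^\infty = 0$ is the classical Roitman torsion theorem \cite{Roitman} for the smooth projective surface $\wt X$ (the argument of \lemref{lem:SK1-div-smooth} with Milne's theorem replaced by Roitman's for $l \neq p$), while the relative-to-absolute comparison is governed by the mod-$l^m$ $K$-theory of the smooth curves underlying $E$ and $rE$, which for $l \neq p$ is computed by étale cohomology and so satisfies descent for the blow-up; this plays the role of the Geisser--Levine vanishing \cite{GL} used in \lemref{lem:SK1-rel-red}.

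Finally, the second equality is automatic: the exact sequence \eqref{eqn:PNormal-0-ex} exhibits $H^1(X, \sK_{2,X})$ as a quotient of $SK_1(X)$, and a quotient of a divisible group is divisible, so $H^1(X, \sK_{2,X}) \otimes_\Z \Q/\Z = 0$. The hard part will be the prime-to-$p$ case: the tools developed in this section are genuinely $p$-adic, so transferring divisibility from $\wt X$ down to $X$ when $l \neq p$ rests on the separate facts that the bi-relative $K$-theory is $p$-primary (hence $l$-adically invisible) and that the smooth resolution $\wt X$ already satisfies Roitman torsion. The $p$-adic case, by contrast, turns entirely on the boundedness of the exponent of the bi-relative group, which is what forces its divisible quotient to vanish.
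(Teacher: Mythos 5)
Your $p$-primary argument is essentially the paper's own proof: reduce to the vanishing of $SK_1(X,rY)/p^{\infty}$ via \lemref{lem:PNormal-SK1-A}, compare with $SK_1(\wt{X}, rE)$ using \lemref{lem:K1-split} together with the bounded $p$-primary torsion of the bi-relative group supplied by \thmref{thm:Desc-CM-Main-A} and \lemref{lem:fini-exp-nil}, invoke \lemref{lem:SK1-rel-nonred}, and kill the resulting divisible group of bounded exponent. One small imprecision: \lemref{lem:K1-split} does not realize $SK_1(X,rY)$ as a subgroup of $SK_1(\wt{X},rE)$; the kernel is only controlled to be $p$-primary of bounded exponent, as in the four-term sequence $0 \to A \to SK_1(X,rY) \to SK_1(\wt{X},rE) \to B \to 0$ used in the paper. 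Your $\Tor$ argument survives this, since $A \otimes_{\Z} {\Q_p}/{\Z_p} = 0$, but you should not assert injectivity.

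The genuine gap is the prime-to-$p$ case. The paper does not reprove it; it quotes \cite[Theorem~7.9]{BPW}, a substantial theorem resting on Merkurjev--Suslin and \'etale descent. Your substitute --- rerunning Lemmas~\ref{lem:SK1-rel-red} and \ref{lem:SK1-rel-nonred} with $p$ replaced by $l$ --- does not work, because every nontrivial input there is specifically $p$-adic. Concretely, \lemref{lem:SK1-rel-red} proves $K_2(E)/p^{\infty}=0$ by establishing the stronger vanishing $K_2(E;{\Z}/{p^{\infty}})=0$, which depends on \cite[Theorem~8.4]{GL} (mod-$p$ $K$-theory of smooth affine schemes of dimension $\le 1$ vanishes in the relevant degrees in characteristic $p$) and on $K_2(S;{\Z}/{p^m})=0$ for a finite set of closed points. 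For $l \neq p$ these mod-$l$ groups are computed by \'etale cohomology and are nonzero (already $K_2(k;{\Z}/{l^m}) \simeq \mu_{l^m}(k) \neq 0$), and in fact the statement you would need to transfer is false: the universal coefficient sequence shows $K_2(E;{\Z}/{l^{\infty}})$ surjects onto ${}_{l^{\infty}}K_1(E) \supseteq \mu_{l^{\infty}}(k) \neq 0$. So one must prove the $l$-divisibility of $K_2(E)$ directly, which is not ``descent for the blow-up.'' Likewise the surjectivity of ${}_{l^{\infty}}SK_1(\wt{X}) \to {}_{l^{\infty}}SK_1(E)$ (whose $p$-analogue is \cite[Lemma~7.7]{KSri}) is nowhere addressed. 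These are exactly the points settled in \cite{BPW}; you should either cite that theorem, as the paper does, or supply a genuinely different argument for $l \neq p$ (and note that in characteristic zero the $p$-primary boundedness results of \S~\ref{sec:boundedness} are not available at all, so your parenthetical reduction for ${\rm char}(k)=0$ also needs a different justification, e.g.\ Corti\~nas' theorem).
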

\begin{proof}
It suffices to show that ${SK_1(X)} \otimes_{\Z} {\Q}/{\Z} = 0$
as the remaining part follows from ~\eqref{eqn:PNormal-0-ex}.

Since ${\Q}/{\Z} \xrightarrow{\simeq} {\underset{l}\oplus} \ 
{\Q_l}/{\Z_l}$, where $l$ runs over the set of prime numbers,
it suffices to show that ${SK_1(X)}/{l^{\infty}} = 0$
for every prime $l$.
For $l \neq {\rm char}(k)$, this follows from 
Lemmas~\ref{lem:SK-1-exact}, ~\ref{lem:div-H3K} and \cite[Theorem~7.9]{BPW}.
So we assume ${\rm char}(k) = p > 0$ and show that
${SK_1(X)}/{p^{\infty}} = 0$.
In view of \lemref{lem:PNormal-SK1-A}, this is equivalent to
showing that ${SK_1(X, rY)}/{p^{\infty}} = 0$ for every 
$r \ge 1$.

To prove this, we consider a commutative diagram with exact bottom row
\[
\xymatrix@C.8pc{
& SK_1(X, rY) \ar[r] \ar@{^{(}->}[d] & SK_1(\wt{X}, rE) \ar@{^{(}->}[d] &
\\
K_1(X, \wt{X}, rY, rE) \ar[r] & K_1(X, rY) \ar[r] & K_1(\wt{X}, rE) \ar[r] &
K_0(X, \wt{X}, rY, rE).}
\]

Since $X$ is a normal surface, it is Cohen-Macaulay with isolated
singularities. It follows from ~\eqref{eqn:SK-1-K1} and
\thmref{thm:Desc-CM-Main-A} that there is an exact sequence
\[
0 \to A \to SK_1(X, rY) \to SK_1(\wt{X}, rE) \to B \to 0,
\]
where $A$ and $B$ are $p$-primary torsion groups of bounded exponents.
It follows from \lemref{lem:SK1-rel-nonred} that
${SK_1(\wt{X}, rE)}/{p^{\infty}} = 0$.
We now repeat the proof of \lemref{lem:SK1-rel-nonred} verbatim to 
conclude that ${SK_1(X, rY)}/{p^{\infty}} = 0$.
\end{proof}

%\vskip .3cm

\section{Torsion theorem for surfaces}
\label{sec:Non-affine}
We prove \thmref{thm:Intro-Res-1} in this section for surfaces.
In fact, we prove this torsion theorem for a certain class of non-affine
surfaces as well. This generalization will be needed in the proof of
the general case of \thmref{thm:Intro-Res-1} later in this paper.
We fix an algebraically closed field $k$ of any characteristic.
Recall from \S~\ref{sec:Prelim} that $\CH^q_F(X)$ denotes the
homological Chow group of codimension $q$ cycles on an equi-dimensional
scheme $X$. 
Recall from \cite{Levine-3} that every normal projective variety $X$ over
$k$ of dimension $d$ has an Albanese variety ${\rm Alb}(X)$ which is an abelian 
variety and there is an Albanese map $alb_X\colon \CH^d(X)_0 \to {\rm Alb}(X)$.

\subsection{Divisibility of $H^1(X, \sK_{2,X})$ for non-projective
normal surfaces}\label{sec:div-no-affine}
In order to prove our torsion theorem for surfaces, we need to 
extend \thmref{thm:normal-vanishing} to certain non-projective
normal surfaces. We do this below.

\begin{lem}\label{lem:surface-blow-up}
Let $X$ be a reduced affine surface over $k$ and let $\pi\colon Y \to X$
be the blow-up of $X$ along a closed subscheme $Z \subset X$ whose support
is finite. Assume that there is a closed subscheme $W \subset X$ such that 
$Z \cap W = \emptyset$ and $\pi^{-1}(X \setminus W)$ is smooth.
We can then find an open embedding $j\colon Y^N \inj \ov{Y}$, where
$\ov{Y}$ is a projective normal surface over $k$ such that the following hold.
\begin{enumerate}
\item 
$\ov{Y}_{\rm sing} = (Y^N)_{\rm sing}$.
\item
$\ov{Y} \setminus Y^N$ is a strict normal crossing divisor on $\ov{Y}$.
\item
The intersection of $T:= \ov{Y} \setminus Y^N$ with every connected component
of $\ov{Y}$ is connected.
\end{enumerate}
\end{lem}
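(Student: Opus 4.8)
The plan is to build $\ov{Y}$ by first compactifying $X$ itself and then carrying the blow-up and the normalization out to the compactification, so that the boundary divisor can be recognized as the support of the pullback of an ample divisor (which is what will force connectedness). Throughout I assume $X$ is irreducible, so that $\ov{X}$, and in the end $\ov{Y}$, is irreducible; this is the situation in which (3) can hold, and a general reduced $X$ is handled by running the construction on the irreducible components of $X^N$. First I would choose a closed embedding $X \inj \A^n \inj \P^n$ and let $\ov{X} \subset \P^n$ be the closure, an irreducible projective surface in which $X$ is dense open and $X_\infty := \ov{X} \setminus X$ is an ample hyperplane section. As $\supp(Z)$ is finite and contained in $X$, it is disjoint from $X_\infty$, so the blow-up $\ov{Y}' := {\rm Bl}_Z(\ov{X}) \to \ov{X}$ restricts to $\pi$ over $X$ and is an isomorphism over a neighbourhood of $X_\infty$; thus $Y$ is open in $\ov{Y}'$ with $\ov{Y}' \setminus Y \xrightarrow{\simeq} X_\infty$. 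The hypothesis on $W$ enters here: since $\pi^{-1}(X \setminus W)$ is smooth and $W \subset X$ is disjoint from $X_\infty$, the surface $Y$, and hence $Y^N$, is smooth along the boundary region, so that $(Y^N)_{\rm sing}$ is a finite set lying in the interior over $W$.

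Next I would normalize. Put $\ov{Y}_1 := (\ov{Y}')^N$ with its finite normalization map. Because normalization commutes with restriction to the open subscheme $Y$, the preimage of $Y$ is exactly $Y^N$ and is open in $\ov{Y}_1$, giving an open immersion $Y^N \inj \ov{Y}_1$ into a normal projective surface whose reduced boundary is $D_1 := (\ov{Y}_1 \setminus Y^N)_{\rm red}$. Over a neighbourhood of $X_\infty$ the map $\ov{Y}' \to \ov{X}$ is an isomorphism, so there $\ov{Y}_1 \to \ov{X}$ agrees with the normalization $\ov{X}^N \to \ov{X}$, and $D_1$ is identified with the reduced preimage $\wt{X}_\infty$ of $X_\infty$ in $\ov{X}^N$. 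Since $\ov{X}^N \to \ov{X}$ is finite and $X_\infty$ is ample on $\ov{X}$, the pullback of $X_\infty$ is ample on the irreducible normal projective surface $\ov{X}^N$, and therefore its support $\wt{X}_\infty = D_1$ is connected.

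It remains to correct $\ov{Y}_1$ along its boundary without touching the interior. The normal surface $\ov{Y}_1$ has only finitely many singular points, and those lying on $D_1$ are disjoint from $Y^N$; I would resolve precisely these using resolution of singularities of surfaces, which is available in every characteristic. This modification is supported over the boundary, hence leaves $Y^N$ and $(Y^N)_{\rm sing}$ unchanged and produces a surface regular in a neighbourhood of its boundary $D_2$, with $D_2$ still connected because each blow-up replaces a point (or connected fibre) of the connected curve $D_1$ by a connected configuration. On this surface, smooth along $D_2$, I would finally perform embedded resolution of the curve $D_2$ — a finite sequence of blow-ups at boundary points, again characteristic-free for surfaces — to make the boundary a strict normal crossing divisor. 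Writing $\ov{Y}$ for the result and $j' : Y^N \inj \ov{Y}$ for the induced open immersion, one checks (1): only the boundary region was modified and it is now regular, so $\ov{Y}$ is normal with $\ov{Y}_{\rm sing} = (Y^N)_{\rm sing}$; (2): the boundary is now strict normal crossing; and (3): it is connected.

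The main obstacle I anticipate is (3). The device to control it is to realize the boundary as the support of the pullback of the ample hyperplane section $X_\infty$ along the finite normalization of $\ov{X}$, so that ampleness — and hence connectedness of the support on an irreducible normal projective surface (the classical connectedness of supports of ample divisors, valid in all characteristics) — is available; the subsequent resolutions are all centred at points of the already connected boundary and so cannot disconnect it. The complementary point demanding care is the bookkeeping: every corrective blow-up must be centred over $X_\infty$, away from the interior $Y^N$, so that the singular locus of the surface in the interior is never disturbed, which is exactly what secures (1) together with normality of $\ov{Y}$.
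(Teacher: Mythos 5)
Your construction is correct, but it is organized differently from the paper's, so a comparison is worthwhile. The paper normalizes \emph{first}: it forms $X^N$, invokes essentially as a known fact a good compactification $X^N \inj \ov{X}$ of the normal affine surface $X^N$ (normal projective, same singular locus, connected strict normal crossing boundary $T$, with connectedness supplied by Goodman's theorem on complements of affine open subsets), and then simply sets $\ov{Y} = {\rm Bl}_{Z'}(\ov{X})$, so that the boundary $T$ is never touched by the blow-up; the hypothesis on $W$, together with \lemref{lem:Blow-up-fin}, is used there to identify ${\rm Bl}_{Z'}(X^N)$ with $Y^N$ (the induced map to $Y$ is finite and birational, hence an isomorphism near the exceptional divisor where $Y$ is smooth). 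You instead compactify $X$ inside $\P^n$, blow up, normalize, and then manufacture the good boundary by hand, resolving the boundary singularities and performing embedded resolution of the boundary curve; connectedness comes from Hartshorne's theorem that the support of an effective ample divisor on an irreducible projective variety of dimension $\ge 2$ is connected, applied to the pullback of the hyperplane section to $\ov{X}^N$, and is preserved under the later modifications because all their fibres are connected and lie over the boundary. Your route buys two things: it actually proves the compactification statement the paper only asserts, and, by normalizing after the blow-up, it makes the identification $Y^N = ({\rm Bl}_Z(\ov{X}))^N|_Y$ automatic, so the $W$-hypothesis plays no essential role in your argument; the price is an explicit appeal to (characteristic-free) resolution of surface singularities, which the paper uses elsewhere in any case. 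One caveat applies equally to both arguments: if $X$ is reducible then $Y^N$, and hence any normal $\ov{Y}$ containing it as a dense open, is disconnected, so (3) can only hold componentwise; you flag this explicitly, whereas the paper's proof silently assumes connectedness of $X^N$ when invoking Goodman.
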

\begin{proof}
We can assume $X$ to be connected.
Let $E \subset Y$ denote the reduced exceptional divisor of the
blow-up map $\pi$.
Let $f\colon X^N \to X$ be the normalization map and let $Z' = Z \times_X X^N$.
Consider the commutative square
\begin{equation}\label{eqn:surface-blow-up-0}
\xymatrix@C1pc{
{\rm Bl}_{Z'}(X^N) \ar[r]^/.8pc/{g} \ar[d]_{\pi^N} & Y \ar[d]^{\pi} \\
X^N \ar[r]_{f} & X.}
\end{equation}

It follows from \lemref{lem:Blow-up-fin} that $g$ is finite and birational.
The map $g^{-1}(Y \setminus E) \to Y \setminus E$ is clearly the 
normalization map.
On the other hand, it is given that $Y$ is smooth in an open neighborhood
of $E$. In particular, $g$ must be an isomorphism in that neighborhood.
It follows that ${\rm Bl}_{Z'}(X^N) = Y^N$.

Since $X^N$ is an affine normal surface, we can find an open embedding
$\ov{j}\colon X^N \inj \ov{X}$, where $\ov{X}$ is a normal projective surface over 
$k$ such that $\ov{X}_{\rm sing} = X^N_{\rm sing}$ and the complement
$T = \ov{X} \setminus X^N$ is a strict normal crossing divisor on $\ov{X}$. 
Moreover, it follows from \cite[Corollary~1]{Goodman} that the 
intersection of $T$ with every connected component
of $\ov{X}$ is connected.

Note that the existence of a normal projective surface $\ov{X}$ as
above is easily shown using the strong resolution of singularities 
techniques for surfaces in arbitrary characteristic. In brief, we first
choose a closed embedding $X^N \subset \A^n_k$ and take $Z$ to be the 
normalization of the closure of $X^N$ in $\P^n_k$. Using the strong
resolution of singularities techniques, one can now successively blow up 
0-dimensional closed subschemes lying over $Z \setminus X^N$
such that the composite map $p\colon\ov{X} \to Z$ has the property that
$\ov{X}$ is regular outside $X^N$ and $p^{-1}(Z \setminus X^N)$ is a strict 
normal crossing divisor.

Since $Z'$ is a closed subscheme of $X^N$ with finite support, it
is actually a closed subscheme of $\ov{X}$ and we get a Cartesian 
square
\begin{equation}\label{eqn:surface-blow-up-1}
\xymatrix@C1pc{
{\rm Bl}_{Z'}(X^N) \ar[r]^{j} \ar[d]_{\pi^N} & 
{\rm Bl}_{Z'}(\ov{X}) \ar[d]^{\ov{\pi}} \\
X^N \ar[r]_{\ov{j}} & \ov{X}.}
\end{equation}
As $\ov{\pi}$ is an isomorphism away from $Z'$, it follows that
${\rm Bl}_{Z'}(\ov{X})$ is normal outside $\ov{\pi}^{-1}(Z')$
and the singular locus of ${\rm Bl}_{Z'}(\ov{X})$ is same as
that of ${\rm Bl}_{Z'}(X^N)$.

On the other hand, the normality of $Y^N = {\rm Bl}_{Z'}(X^N)$ implies that
${\rm Bl}_{Z'}(\ov{X})$ is already normal in an open neighborhood of
$\ov{\pi}^{-1}(Z')$. It follows that ${\rm Bl}_{Z'}(\ov{X})$ is normal.
Furthermore, ${\rm Bl}_{Z'}(\ov{X}) \setminus {\rm Bl}_{Z'}(X^N)
= \ov{\pi}^{-1}(T)$ and the map $\ov{\pi}^{-1}(T) \to T$ is clearly an 
isomorphism. Since the connected components of ${\rm Bl}_{Z'}(\ov{X})$
are precisely the inverse images of the connected components of the
normal surface $\ov{X}$,
we see that the intersection of $\ov{\pi}^{-1}(T)$ with every connected 
component of ${\rm Bl}_{Z'}(\ov{X})$ is connected.
Setting $\ov{Y} = {\rm Bl}_{Z'}(\ov{X})$, we see that
$\ov{Y}$ is a normal projective surface with open embedding
$j \colon Y^N \inj \ov{Y}$ and $\ov{Y} \setminus Y^N = T$, satisfies the desired
properties.
\end{proof}

Let $X$ be a reduced affine surface over $k$ and let $\pi\colon Y \to X$
be as in \lemref{lem:surface-blow-up}. Let $j\colon Y^N \inj \ov{Y}$ be
the open embedding in a normal projective surface given by
\lemref{lem:surface-blow-up}. Let $i\colon T = \ov{Y} \setminus Y^N \inj \ov{Y}$ be
the inclusion of the strict normal crossing divisor on $\ov{Y}$.  
Using Lemma~\ref{lem:Ncd} and Bloch's formula for normal surfaces
(use either \cite[Theorems~2.2, 8.9]{PW2} or combine \cite{Levine-1} and 
\cite{BS-*})
$H^2(\ov{Y}, \sK_{2,\ov{Y}}) \cong \CH^2(\ov{Y})$, there is a push-forward map
$i_*\colon \CH^1_F(T) \to \CH^2(\ov{Y})$.

\begin{lem}\label{lem:Alb-blow-up}
The composite map
$\psi\colon \CH^1_F(T)_0 \xrightarrow{i_*} \CH^2(\ov{Y})_0 \to  \Alb(\ov{Y})$
is surjective. 
\end{lem}
\begin{proof}
We can assume $X$ to be connected.
Keeping the notations of \lemref{lem:surface-blow-up} and its proof, we let 
$\wt{\pi}\colon \wt{Y} \to \ov{Y}$ be a resolution
of singularities of $\ov{Y}$ such that the reduced exceptional divisor
$\wt{E}$ is strict normal crossing. One knows that the 
Albanese variety of a normal projective surface remains unchanged if one takes
its resolution of singularities (see \cite[\S~7]{KSri}), 
and thus one has an isomorphism
$\wt{\pi}_*\colon \Alb(\wt{Y}) \to \Alb(\ov{Y})$.
Let $\phi\colon \wt{Y} \to \Alb(\wt{Y})$ denote the universal morphism.
As $\ov{Y}_{\rm sing} \cap T = (Y^N)_{\rm sing} \cap T = \emptyset$, 
we can assume $T$ to be a closed subscheme of $\wt{Y}$.

Let $\ov{\pi} \colon \ov{Y} \to \ov{X}$ be as in ~\eqref{eqn:surface-blow-up-1}.
Since $X^N$ is smooth in an open neighborhood of $Z' = Z \times_X X^N$ by 
assumption, it follows that $F := \ov{\pi}^{-1}(Z')$  
is a closed subset of $\wt{Y}$ via $\wt{\pi}\colon \wt{Y} \to \ov{Y}$.
Furthermore, it is disjoint from $\wt{E}$ and $T$.
Set $D = \wt{E} + F + T$.

Since the intersection of $T$ with a connected component of $\wt{Y}$
is connected, it follows from \cite[Lemma~5.4]{MS} that
$\psi(\CH^1_F(T)_0)$ is an abelian subvariety of $\Alb(\ov{Y})$.
Let us denote this abelian subvariety by $B$. 

Following the proof of \cite[Theorem~5.3]{MS}, let $A = {\Alb(\ov{Y})}/B$
and let $\beta\colon \wt{Y} \to A$ denote the composite map
$\wt{Y} \to \Alb(\ov{Y}) \to A$. It is then clear that
$z := \beta(T)$ is a closed point of $A$. 
Since $A$ is divisible, we
need to show that the image of every irreducible component
of $\wt{Y}$ in $A$ is a point. We can thus assume that
$\wt{Y}$ is irreducible and, in particular, $T$ is connected.
We then need to show that $ \wt{Z}:= \beta(\wt{Y})$ is a point. 
Since $\beta$ is projective, $\beta(x) = z$ for every 
$x \in \wt{Y} \setminus D$ will imply  that 
$\dim(\wt{Z}) = 0$, and we are done. 
Otherwise, we can assume that there is a closed point 
$x \in \wt{Y} \setminus D$ such that $\beta(x) \neq z$.
We let $a = \beta(x)$ and $\wt{Y}_a = \beta^{-1}(a)$.

Suppose first that $\dim(\wt{Z}) = 1$. Since $\beta\colon \wt{Y} \to \wt{Z}$ 
is a 
projective and surjective morphism of relative dimension one,  each of its 
fibers is at least one-dimensional. 
In particular, $\dim(\wt{Y}_a) = 1$. Since $T \cap \wt{Y}_a = \emptyset$ and
$\wt{Y}_a \not\subset \wt{E}$, it follows that $\wt{\pi}(\wt{Y}_a)$ is a 
projective curve in $Y^N = \ov{Y} \setminus T$. 
Since $\wt{Y}_a \not\subset F$, it also follows that 
$\wt{\pi}(\wt{Y}_a)$ is a projective curve in $Y^N$ which is not contained
in $F$. Since $F$ is the exceptional divisor for the blow-up
map $\pi^N\colon Y^N \to X^N$, it follows that $\pi^N(\wt{\pi}(\wt{Y}_a))$
is a projective curve in $X^N$. But this is absurd as $X^N$ is affine.

Next assume that $\dim(\wt{Z}) = 2$. Then it follows from \cite[pg.~96]{Sef}
(or see pg.~71-72 of its online version)
that the intersection matrix of $\beta^{-1}(z)$ is negative definite. 
In particular, we get $T^2 \le 0$. On the other hand, we observe that
$T$ is a closed subscheme of $\ov{X}$ (in the notations of the proof of 
\lemref{lem:surface-blow-up}) with affine complement $X^N$. Hence, it 
follows from \cite[Theorem~1]{Goodman}
that there is a monoidal transformation $\eta\colon \ov{X}_1 \to \ov{X}$ with 
center in $T$ such that $\eta^{-1}(T)$ is the support of an ample divisor.
This implies in particular that we must have $T^2 > 0$, which is again
absurd. We have thus shown that $\dim(\wt{Z}) = 0$ and this finishes the
proof of the lemma.
\end{proof}

\begin{prop}\label{prop:surface-blow-up-div}
Let $X$ be a reduced affine surface over $k$ and let $\pi\colon Y \to X$
be as in \lemref{lem:surface-blow-up}. Then 
\[
H^1(Y^N, \sK_{2,Y^N}) \otimes_{\Z} {\Q}/{\Z} = 0 =
H^1(X^N, \sK_{2, X^N}) \otimes_{\Z} {\Q}/{\Z}.
\]
\end{prop}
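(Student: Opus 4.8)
The plan is to prove both vanishings at once: the second is the special case of the first with $Z=\emptyset$ (so that $Y=X$ and $Y^N=X^N$), hence it suffices to treat $H^1(Y^N,\sK_{2,Y^N})\otimes_{\Z}{\Q}/{\Z}$. First I would use \lemref{lem:surface-blow-up} to realize $Y^N$ as a dense open subscheme of a normal projective surface $\ov{Y}$ whose complement $T=\ov{Y}\setminus Y^N$ is a connected strict normal crossing divisor lying in the regular locus of $\ov{Y}$. The localization sequence for Zariski cohomology of $\sK_2$ with support along $T$, combined with the identification $H^2_T(\ov{Y},\sK_2)\simeq\CH^1_F(T)$ of \lemref{lem:Ncd} (applicable since $T\cap\ov{Y}_{\rm sing}=\emptyset$) and Levine's Bloch formula $H^2(\ov{Y},\sK_2)\simeq\CH^2(\ov{Y})$ \cite{Levine-1}, yields an exact sequence
\[
H^1(\ov{Y},\sK_{2,\ov{Y}})\to H^1(Y^N,\sK_{2,Y^N})\to \CH^1_F(T)\xrightarrow{i_*}\CH^2(\ov{Y}),
\]
in which the last map is the cycle-theoretic pushforward along $i:T\inj\ov{Y}$.

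Next I would tensor the right-exact piece $H^1(\ov{Y},\sK_2)\to H^1(Y^N,\sK_2)\to K\to 0$ with ${\Q}/{\Z}$, where $K=\Ker(i_*)$. Since $\ov{Y}$ is a normal projective surface, \thmref{thm:normal-vanishing} gives $H^1(\ov{Y},\sK_{2,\ov{Y}})\otimes_{\Z}{\Q}/{\Z}=0$, so it remains to prove $K\otimes_{\Z}{\Q}/{\Z}=0$. Because $k=\ov{k}$, every closed point has degree one and $i_*$ respects degrees, so $K$ lies in the degree-zero subgroup $\CH^1_F(T)_0$; writing $M=i_*(\CH^1_F(T)_0)\subseteq\CH^2(\ov{Y})_0$ we obtain a short exact sequence $0\to K\to\CH^1_F(T)_0\to M\to 0$. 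The group $\CH^1_F(T)_0$ is the group of $k$-points of the generalized Jacobian of the curve $T$, a semi-abelian variety, hence divisible over $\ov{k}$; thus $\CH^1_F(T)_0\otimes_{\Z}{\Q}/{\Z}=0$, and the $\Tor$-sequence exhibits $K\otimes_{\Z}{\Q}/{\Z}$ as a quotient of the connecting image of $M_{\rm tors}=\Tor_1(M,{\Q}/{\Z})$.

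The crux is therefore to show that this connecting map vanishes, for which it suffices to prove that $i_*:\CH^1_F(T)_{0}\to M$ is surjective on torsion subgroups. Here I would invoke \lemref{lem:Alb-blow-up}: the composite $\psi=alb_{\ov{Y}}\circ i_*:\CH^1_F(T)_0\to\Alb(\ov{Y})$ is surjective, and since it is induced by the homomorphism of smooth commutative algebraic groups from the Jacobian of $T$ to $\Alb(\ov{Y})$, \lemref{lem:tor-groups} shows $\psi$ is surjective on torsion. On the other hand, the Roitman torsion theorem for the normal projective surface $\ov{Y}$ from \cite{KSri} makes $alb_{\ov{Y}}$ injective on the torsion of $\CH^2(\ov{Y})_0$. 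Combining these, $alb_{\ov{Y}}$ carries both $i_*(\CH^1_F(T)_{0,{\rm tors}})$ and $M_{\rm tors}$ injectively into $\Alb(\ov{Y})_{\rm tors}$, while the former already surjects onto it; hence $i_*(\CH^1_F(T)_{0,{\rm tors}})=M_{\rm tors}$. Every torsion class of $M$ then lifts to a torsion class of $\CH^1_F(T)_0$, which forces the connecting map to be zero and gives $K\otimes_{\Z}{\Q}/{\Z}=0$, completing the proof.

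I expect the main obstacle to be the crux paragraph: marshalling the three separate inputs, namely divisibility of the curve's Jacobian, algebraicity together with surjectivity-on-torsion of $\psi$, and Roitman injectivity for $\ov{Y}$, and in particular justifying that $\psi$ genuinely arises from a morphism of algebraic groups so that \lemref{lem:tor-groups} applies. A secondary, more technical point is checking that the boundary map in the localization sequence is identified with the cycle pushforward $i_*$ under the isomorphisms of \lemref{lem:Ncd} and the Bloch formula, and tracking the degree decomposition that places $K$ inside $\CH^1_F(T)_0$.
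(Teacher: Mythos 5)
Your proposal is correct and follows essentially the same route as the paper's proof: the same compactification from \lemref{lem:surface-blow-up}, the same localization sequence identified via \lemref{lem:Ncd} and Bloch's formula, and the same three inputs (divisibility of $\CH^1_F(T)_0$, surjectivity on torsion of the Albanese composite via \lemref{lem:Alb-blow-up} and \lemref{lem:tor-groups}, and the Roitman theorem of \cite{KSri}) combined by the same homological algebra. The one point you flagged --- realizing $\psi$ as coming from a morphism of algebraic groups --- is handled in the paper not by identifying $\CH^1_F(T)_0$ with the generalized Jacobian of $T$ (which is not quite accurate for Fulton's Chow group) but by precomposing with the surjection $\Pic^0(T^N) \surj \CH^1_F(T)_0$ from the abelian variety $\Pic^0(T^N)$, to which \lemref{lem:tor-groups} applies directly and which also yields the divisibility you need.
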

\begin{proof}
We can assume $X$ to be connected.
Let $j\colon Y^N \inj \ov{Y}$ be the open embedding obtained in
\lemref{lem:surface-blow-up} with complement $\ov{Y} \setminus Y^N = T$.
We keep the notations of the proof of \lemref{lem:surface-blow-up}.

Using Bloch's formula for normal surfaces
(see the paragraph above \lemref{lem:Alb-blow-up}), 
there is a commutative diagram
\begin{equation}\label{eqn:affine-vanishing-0}
\xymatrix@C1pc{
\CH^2(\ov{Y}) \ar[r]^/-.1cm/{\simeq} \ar[d]_{j^*} &  H^2(\ov{Y}, \sK_{2,\ov{Y}}) 
\ar[d]^{j^*} \\
\CH^2(Y^N) \ar[r]_/-.3cm/{\simeq} & H^2(Y^N, \sK_{2, Y^N}),}
\end{equation}
where the two horizontal arrows are isomorphisms.
Since the left vertical arrow is surjective by the choice of $\ov{Y}$, it
follows that the right vertical arrow is also surjective.

We now consider the long exact sequence of Zariski cohomology groups
\[
H^1(\ov{Y}, \sK_{2,\ov{Y}}) \xrightarrow{j^*} H^1(Y^N, \sK_{2, Y^N})
\to H^2_T(\ov{Y}, \sK_{2,\ov{Y}}) \xrightarrow{i_*} 
H^2(\ov{Y}, \sK_{2,\ov{Y}})  \xrightarrow{j^*}
H^2(Y^N, \sK_{2,{Y^N}}).
\]

%Let $i\colon T \inj \ov{Y}$ denote the inclusion map.
Lemma~\ref{lem:Ncd}, the isomorphisms of  
~\eqref{eqn:affine-vanishing-0} and the Gersten resolution for
$\sK_{2, \ov{Y}}$ (see \cite[Theorem~7]{Levine-1} for normal surfaces) 
together show that this exact sequence is
equivalent to an exact sequence
\[
H^1(\ov{Y}, \sK_{2,\ov{Y}}) \xrightarrow{j^*} H^1(Y^N, \sK_{2, Y^N}) \to
\CH^1_F(T) \xrightarrow{i_*} \CH^2(\ov{Y}) \xrightarrow{j^*} 
\CH^2(Y^N) \to 0.
\]

Since the normal surface 
$\ov{Y}$ is a disjoint union of its connected components 
$\{\ov{Y}_1,  \cdots , \ov{Y}_r\}$ and
the intersection of $T$ with each of these components is connected, 
we have a commutative diagram
\[
\xymatrix@C1pc{
H^1(Y^N, \sK_{2, Y^N}) \ar[r] & \CH^1_F(T) \ar[r]^{i_*} \ar@{->>}[d]_{\rm deg} &
\CH^2(\ov{Y}) \ar@{->>}[d]^{\rm deg} \\
& \Z^r \ar@{=}[r] & \Z^r.} 
\]

Recall that $\CH^1_F(T)_{0}$ and $\CH^2(\ov{Y})_{0}$ denote the
kernels of the degree maps. Since $\ov{Y}$ is smooth along $T$, it is easy to 
see that the map $\CH^2(\ov{Y})_{0} \to \CH^2(Y^N)$ is surjective.
We therefore get an exact sequence
\begin{equation}\label{eqn:affine-vanishing-1}
H^1(\ov{Y}, \sK_{2, \ov{Y}}) \xrightarrow{j^*} H^1(Y^N, \sK_{2, Y^N}) \to
\CH^1_F(T)_0 \xrightarrow{i_*} \CH^2(\ov{Y})_0 \xrightarrow{j^*} \CH^2(Y^N) 
\to 0.
\end{equation}

%We next note that $T$ is a connected strict normal crossing divisor in $
%\ov{Y}$ and hence the map $\Pic^0(T^N) \to \CH^1_F(T)_0$ is surjective.
Let $\{T_1, \cdots , T_s\}$ denote the irreducible components of $T$.
Since each $T \cap \ov{Y}_i$ is connected and 
$T^N$ is smooth, the proper push-forward map 
$\Pic^0(T^N) \simeq \stackrel{s}{\underset{i =1}\oplus} \CH^1_F(T^N_i)_0 \to 
\CH^1_F(T)_0$ is surjective, by \cite[Lemma~5.1]{MS}.
Combining this with \lemref{lem:Alb-blow-up}, we see that the map
$T^N \to \ov{Y}$ induces a surjective morphism of abelian varieties
$\Pic^0(T^N) \surj \Alb(\ov{Y})$.  

We conclude from \lemref{lem:tor-groups} that the map
$\Pic^0(T^N)_{\rm tors} \to \Alb(\ov{Y})_{\rm tors}$ is surjective.
The factorization $\Pic^0(T^N)_{\rm tors} \to (\CH^1_F(T)_0)_{\rm tors} \to
\Alb(\ov{Y})_{\rm tors}$ implies that the composite map
$(\CH^1_F(T)_0)_{\rm tors} \to (\CH^2(\ov{Y})_0)_{\rm tors} \to 
\Alb(\ov{Y})_{\rm tors}$ is surjective.
On the other hand, the Roitman torsion theorem for projective normal surfaces
(see \cite[Theorem~1.6]{KSri}) says that the map
$\CH^2(\ov{Y})_0 \to \Alb(\ov{Y})$ is an isomorphism on the 
torsion subgroups. We conclude that the map
\begin{equation}\label{eqn:affine-vanishing-2}
(i_*)_{\rm tors}\colon (\CH^1_F(T)_0)_{\rm tors} \to (\CH^2(\ov{Y})_0)_{\rm tors}
\end{equation}
is surjective.

We set $H = {\rm Ker}(i_*)$ so that ~\eqref{eqn:affine-vanishing-1} yields 
a commutative diagram of short exact sequences
\begin{equation}\label{eqn:affine-vanishing-3}
\xymatrix@C1pc{
H^1(\ov{Y}, \sK_{2, \ov{Y}}) \ar[r]^{j^*} \ar[d] & 
H^1(Y^N, \sK_{2, Y^N}) \ar[r] \ar[d] & H \ar[r] \ar[d] & 0 \\
H^1(\ov{Y}, \sK_{2, \ov{Y}})_{\Q} \ar[r]_<<<{j^*} &
H^1(Y^N, \sK_{2, Y^N})_{\Q} \ar[r] &  H_{\Q} \ar[r] & 0,}
\end{equation}
where the vertical arrows are all localization maps.
Since $\Pic^0(T^N)$ is divisible and it surjects onto $\CH^1_F(T)_0$, it
follows that $\CH^1_F(T)_0$ is divisible. In particular, the map
$\CH^1_F(T)_0 \to (\CH^1_F(T)_0)_{\Q}$ is surjective.
Combining this with ~\eqref{eqn:affine-vanishing-1}  and
~\eqref{eqn:affine-vanishing-2}, we
conclude that the right vertical arrow in ~\eqref{eqn:affine-vanishing-3} is
surjective. The left vertical arrow in ~\eqref{eqn:affine-vanishing-3} is
surjective by \thmref{thm:normal-vanishing}. In particular, the
middle vertical arrow is also surjective. Equivalently,
$H^1(Y^N, \sK_{2, Y^N}) \otimes_{\Z} {\Q}/{\Z}  = 0$.
To prove that $H^1(X^N, \sK_{2, X^N}) \otimes_{\Z} {\Q}/{\Z}  = 0$, we
repeat the above proof verbatim, assuming that the map $\pi$ is identity.
This finishes the proof.
\end{proof}

\begin{lem}\label{lem:Tfree-normal}
Let $X$ be a reduced affine surface over $k$ and let 
$\pi\colon Y \to X$ be as in \lemref{lem:surface-blow-up}.
Then $\CH^2(X^N)$ and $\CH^2(Y^N)$ are uniquely divisible.
\end{lem}
\begin{proof}
We continue with the notations of the proof of 
\propref{prop:surface-blow-up-div}. We first prove the divisibility
assertion.
Since $\ov{Y}$ is a normal projective surface, Bertini theorem implies that
there is a smooth projective curve $C \subset \ov{Y}$ passing through
any given smooth point of $Y^N$ such that $C \cap (Y^N)_{\rm sing} = 
C \cap \ov{Y}_{\rm sing} = \emptyset$.
Any such curve defines a push-forward map $\Pic^0(C) \to {\CH^2(\ov{Y})}_0
\surj \CH^2(Y^N)$ (see, for instance, \cite[Lemma~1.8]{ESV}). 
It follows that $\CH^2(Y^N)$ is generated by the images of 
$\Pic^0(C) \to \CH^2(Y^N)$, where $C$ is a smooth projective curve. 
In particular, $\CH^2(Y^N)$ is divisible.
A similar argument works for $\CH^2(X^N)$ as well.

We now prove that $\CH^2(X^N)$ and $\CH^2(Y^N)$ are torsion-free.
We have shown in ~\eqref{eqn:affine-vanishing-1} that there is an exact sequence
\[
\CH^1_F(T)_0 \xrightarrow{i_*} \CH^2(\ov{Y})_0 \xrightarrow{j^*} 
\CH^2(Y^N) \to 0.
\]
We have also shown above that $\CH^1_F(T)_0$ is divisible. It follows that the
map ${(\CH^2(\ov{Y})_0)}_{\rm tors} \to$ \\
$\CH^2(Y^N)_{\rm tors}$ is surjective.
On the other hand, it is shown in ~\eqref{eqn:affine-vanishing-2} that
the map ${(\CH^1_F(T)_0)}_{\rm tors} \to {(\CH^2(\ov{Y})_0)}_{\rm tors}$ is 
surjective. We conclude that $\CH^2(Y^N)_{\rm tors} = 0$.

Finally, as $X^N$ is a normal affine surface over $k$, it follows from
\cite[Corollary~2.7]{Levine-2} and \cite[Corollary~1.7]{KSri}
that $\CH^2(X^N)$ is torsion-free.
\end{proof}

We now prove \thmref{thm:Intro-Res-1} for affine surfaces and their
blow-ups.

\begin{thm}\label{thm:Main-affine}
Let $X$ be a reduced affine surface over an algebraically closed field $k$.
Let $\pi\colon Y \to X$ be as in \lemref{lem:surface-blow-up}.
Then $\CH^2(X)$ and $\CH^2(Y)$ are uniquely divisible.
\end{thm}
\begin{proof}
We can assume that $X$ is connected.
Let $g\colon Y^N \to Y$ denote the normalization map.
By \cite[Theorem~3.3]{BPW} (see also \cite[Proposition~2.3]{Krishna-1} for
a refined version), there exists a conducting subscheme $W \subset Y$ with 
$W_{\rm red} = Y_{\rm sing}$ such that setting $\ov{W} = W \times_Y Y^N$, there
is a Mayer-Vietoris exact sequence
\begin{equation}\label{eqn:Main-affine-0}
SK_1(Y^N) \to \frac{SK_1(\ov{W})}{SK_1(W)} \to \CH^2(Y) \xrightarrow{g^*} 
\CH^2(Y^N) \to 0,
\end{equation}
where we have identified $SK_0(Y) = F^2K_0(Y)$ with $\CH^2(Y)$ (and
similarly for $Y^N$) by \cite[Theorem~7]{Levine-1}. 

It follows from our assumption that $W_{\rm red} = Y_{\rm sing}$ is affine.
We conclude that $W$ is affine (see \cite[Exercise~III.3.1]{Hartshorne-2}).
Since the map $\ov{W} \to W$ is finite, it follows that $\ov{W}$ is also affine.
It is classically known 
(see for instance, \cite[Chap.~IX, Propositions~1.3, 3.9]{Bass}) that 
$SK_1(R) = SK_1(R_{\rm red})$ for any commutative ring $R$. 
We can therefore assume that $W$ and $\ov{W}$ are reduced in 
~\eqref{eqn:Main-affine-0}.

It follows from the Thomason-Trobaugh spectral sequence 
$E^{p,q}_2 = H^p(X, \sK_{q,X}) \Rightarrow K_{q-p}(X)$ that there is a 
commutative diagram
\begin{equation}\label{eqn:Main-affine-1} 
\xymatrix@C1pc{
SK_1(Y^N) \ar[r] \ar[d] & H^1(Y^N, \sK_{2, Y^N}) \ar[d] \\
\frac{SK_1(\ov{W})}{SK_1(W)} \ar[r] & \frac{H^1(\ov{W}, \sK_{2, \ov{W}})}
{H^1(W, \sK_{2, W})},}
\end{equation}
where the top horizontal arrow is surjective (see \lemref{lem:SK-1-exact})
and the bottom horizontal arrow is an isomorphism.
We thus get an exact sequence
\begin{equation}\label{eqn:Main-affine-2} 
H^1(Y^N, \sK_{2, Y^N}) \to \frac{SK_1(\ov{W})}{SK_1(W)} \to \CH^2(Y) 
\xrightarrow{g^*} \CH^2(Y^N) \to 0.
\end{equation}

Since $W$ as well as $\ov{W}$ are reduced affine curves,
\cite[Theorem~5.3(i)]{BPW} tells us that there is an exact sequence
\[
SK_1(W) \to SK_1(\ov{W}) \to \frac{SK_1(\ov{W})}{SK_1(W)} \to 0,
\]
where the first two terms from the left are uniquely divisible.
It follows that the last term is also uniquely divisible.
We remark here that in the proof of \cite[Theorem~5.3(i)]{BPW}
(see Step~2 of the proof), the authors mistakenly write that 
if $Y$ is a reduced affine curve over $k$, then $SK_1(Y)$ is uniquely 
divisible (modulo a bounded $p$-torsion subgroup if $p \neq 0$). However,
their proof actually shows that $SK_1(Y)$ is always 
uniquely divisible, given the trivial observation that
every homomorphism from a torsion group to a torsion-free group is zero.

We now consider a commutative diagram with exact rows
\[
\xymatrix@C1pc{
H^1(Y^N, \sK_{2, Y^N}) \ar[r] \ar[d] & \frac{SK_1(\ov{W})}{SK_1(W)}
\ar[r] \ar[d] & {\rm Ker}(g^*) \ar[r] \ar[d] & 0 \\
H^1(Y^N, \sK_{2, Y^N})_{\Q} \ar[r] & {(\frac{SK_1(\ov{W})}{SK_1(W)})}_{\Q}
\ar[r] & {{\rm Ker}(g^*)}_{\Q} \ar[r] & 0.}
\]

We have just shown that the middle vertical arrow is an isomorphism.
It follows from \propref{prop:surface-blow-up-div} that the left vertical arrow
is surjective. We conclude that ${\rm Ker}(g^*)$ is uniquely divisible.

It follows from \lemref{lem:Tfree-normal} that $\CH^2(Y^N)$ is
uniquely divisible.
We conclude from ~\eqref{eqn:Main-affine-2} that $\CH^2(Y)$ is uniquely
divisible. 
%In particular, so is $F^2K_0(Y)$ by \cite[Theorem~7]{Levine-1}.
Exactly the same argument, using \propref{prop:surface-blow-up-div} and
\cite[Theorem~1.6]{KSri}, also shows that $\CH^2(X)$ is uniquely
divisible.
\end{proof}

%\vskip .3cm

As an immediate consequence of \thmref{thm:Main-affine}, we obtain the
following extension of the affine Roitman torsion theorem to non-affine
surfaces. 

\begin{cor}\label{cor:KSri-extn}
Let $X$ be a reduced affine surface over an algebraically closed field $k$
and let $\pi\colon Y \to X$ be a resolution of singularities of $X$.
Then $\CH^2(Y)$ is uniquely divisible.
\end{cor}
\begin{proof}
Since $Y$ is also a resolution of singularities of the normalization $X^N$,
we can assume $X$ to be normal. The corollary now follows from
\thmref{thm:Main-affine} and
\cite[Remark~C, p.~155]{Lipman} (see \lemref{lem:Desingularization}).
\end{proof}

\section{Torsion theorem  in higher dimension}\label{sec:High-D}
In this section, we prove our torsion theorem for affine varieties
by reducing it to the case of surfaces using some blow-up techniques.
We shall use the following straight-forward commutative algebra result.

\begin{lem}\label{lem:red-finite} 
Let $R$ be commutative Noetherian ring such that $R_{\fp}$ is reduced 
for every associated prime ideal $\fp$ of $R$.
Then $R$ must be reduced.
\end{lem}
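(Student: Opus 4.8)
The plan is to show directly that the nilradical $N := \mathrm{nil}(R)$ vanishes, arguing by contradiction through the theory of associated primes. Since $R$ is Noetherian, $N$ is a finitely generated $R$-module, so if $N \neq 0$ then $\mathrm{Ass}_R(N) \neq \emptyset$, and I would pick some $\fp \in \mathrm{Ass}_R(N)$. Because $N$ is a submodule (indeed an ideal) of $R$, one has $\mathrm{Ass}_R(N) \subseteq \mathrm{Ass}_R(R)$, so $\fp$ is an associated prime of $R$ and the hypothesis applies: $R_\fp$ is reduced.

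The contradiction would then come from localizing at $\fp$. By the definition of an associated prime, $\fp = \mathrm{Ann}_R(y)$ for some nonzero $y \in N$, which yields an embedding $R/\fp \hookrightarrow N$ sending $r + \fp \mapsto ry$. Localizing at $\fp$, an exact functor, produces an injection $\kappa(\fp) = (R/\fp)_\fp \hookrightarrow N_\fp$. On the other hand, since the formation of the nilradical commutes with localization, $N_\fp = (\mathrm{nil}(R))_\fp = \mathrm{nil}(R_\fp) = 0$, the last equality holding precisely because $R_\fp$ is reduced. This forces $\kappa(\fp) = 0$, which is absurd. Hence $N = 0$ and $R$ is reduced.

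There is essentially no serious obstacle: the argument is a routine application of associated-prime theory over a Noetherian ring. The only two facts that must be invoked with a little care are that $\mathrm{Ass}_R(N) \subseteq \mathrm{Ass}_R(R)$ for a submodule $N \subseteq R$, and that the nilradical commutes with localization, $(\mathrm{nil}(R))_\fp = \mathrm{nil}(R_\fp)$; both are standard. An equivalent route would be to use the injectivity of the canonical map $R \hookrightarrow \prod_{\fp \in \mathrm{Ass}(R)} R_\fp$ together with the observation that a subring of a product of reduced rings is reduced, but I would expect the direct nilradical computation above to be the cleanest to write out.
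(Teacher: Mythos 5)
Your proof is correct. It takes a mildly different route from the paper's: the paper picks a nonzero nilpotent $a$, localizes at the multiplicative set $S = R \setminus \bigl(\bigcup_i \fp_i\bigr)$ of non-zerodivisors (the complement of the union of all associated primes), observes that the resulting total quotient ring $R_S$ is reduced because its localizations at its maximal ideals are among the $R_{\fp_i}$, and then concludes $ab = 0$ for some non-zerodivisor $b$, forcing $a = 0$. You instead work with the whole nilradical $N$ and produce a single well-chosen associated prime via the standard facts $\mathrm{Ass}_R(N) \neq \emptyset$ and $\mathrm{Ass}_R(N) \subseteq \mathrm{Ass}_R(R)$, then get the contradiction from $N_\fp = \mathrm{nil}(R_\fp) = 0$ against the injection $\kappa(\fp) \hookrightarrow N_\fp$. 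The two arguments lean on complementary pieces of associated-prime theory: the paper's uses that the zerodivisors of a Noetherian ring are exactly $\bigcup_{\fp \in \mathrm{Ass}(R)} \fp$ (and implicitly the local-to-global criterion for reducedness of $R_S$, a step it leaves unspoken), while yours isolates the relevant prime from the start and avoids any mention of zerodivisors. Yours is arguably the more self-contained write-up; the paper's is shorter on the page because it delegates more to standard facts. A small simplification of your own argument: once you have $y \in N$ nonzero with $\mathrm{Ann}_R(y) = \fp$, the element $y/1 \in R_\fp$ is nonzero (any $s$ killing it would lie in $\mathrm{Ann}_R(y) = \fp$) and nilpotent, which contradicts reducedness of $R_\fp$ directly without invoking the embedding of $\kappa(\fp)$.
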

\begin{proof}
Let $a \neq 0$ be an element of $R$ such that $a^n = 0$ for some $n \ge 1$.
Let $\{\fp_1, \cdots , \fp_r\}$ be the set of associated primes of $R$
and set $S = R \setminus (\cup_i \fp_i)$. It follows from our 
assumption that $R_S$ is reduced. In particular,
$a$ dies in $R_S$. Equivalently, there exists $b \in S$ such that
$ab = 0$. But this means $a = 0$ because $b$ is not a zero-divisor in $R$. 
\end{proof}

We shall also use the following  algebro-geometric results
in the proof of \thmref{thm:Tor-High-D}. We leave the proof to the reader
as an exercise.

\begin{lem}\label{lem:Fulton*-0}
Let $X$ be a reduced quasi-projective scheme of dimension $d \ge 0$ over
a field $k$. For $r \ge 0$, 
let $\alpha, \beta \in \sZ^F_r(X)$ be such that their
supports are disjoint closed subsets of $X$.
Then $\alpha + \beta = 0$ in $\sZ^F_r(X)$ if and only if
$\alpha = 0 = \beta$.
\end{lem}

\begin{comment}

\begin{proof}
Recall that the support of a cycle in $\sZ^F_r(X)$ is the union of its
irreducible components.
We only need to show that $\alpha = 0 = \beta$ if $\alpha + \beta = 0$.
Since $\sZ^F_r(X)$ is a free abelian group on irreducible cycles, we
can write $\alpha = \stackrel{n}{\underset{i=1}\sum}n_i [V_i]$,
where $V_i$'s are distinct irreducible cycles.
We can similarly write $\beta = \stackrel{m}{\underset{j=1}\sum} m_j [W_j]$.
Our assumption then implies that  
\begin{equation}\label{eqn:Fulton*-0-0}
\stackrel{n}{\underset{i=1}\sum}n_i [V_i] = 
\stackrel{m}{\underset{j=1}\sum} (-m_j) [W_j]
\end{equation}
in $\sZ^F_r(X)$.
If $\alpha \neq 0$, then some $V_i$ must be non-empty.
In this  case, ~\eqref{eqn:Fulton*-0-0} implies that $V_i = W_j$ for some $j$.
But this contradicts the hypothesis that the supports of $\alpha$ and $\beta$
are disjoint. We must therefore have $\alpha = 0$. Equivalently,
$\beta = 0$.
\end{proof} 
\end{comment}

\begin{lem}\label{lem:rational}
Let $X$ be a smooth quasi-projective scheme of dimension $d \ge 2$ over
an algebraically closed field $k$. Let $\pi\colon X' \to X$ be a composition of
monoidal transformations with point centers and let $E \subset X'$ be
the exceptional divisor with $r$ irreducible components. 
Let $\beta \in \sZ^F_0(E)$ be a 0-cycle such that
${\rm deg}(\beta) = 0$ in $\Z^r$. 
Then we can find finitely many smooth projective
rational curves $L_j \subset E$ and rational functions $g_j \in k(L_j)$
such that $\beta = \sum_j (g_j)_{L_j}$.
\end{lem}
\begin{proof}
This is an easy exercise and we give only a sketch
(see the proofs of \cite[Lemma~5.2]{Bloch-1} and
\cite[Lemma~2.1]{Levine-3}).
We can write $E = \pi^{-1}(x)$, where $x \in X$ is a closed point.
Then $E$ is connected whose irreducible components $E_i$'s are 
successive point blow-ups of $\P^{d-1}_k$. In particular,
$\CH^F_0(E_i) \simeq \Z$. 
Let $\pi\colon E^N = \amalg_i E_i \to E$ denote the normalization map.

Recall now from \cite[\S~1.3]{Fulton} that $\sR^F_0(E)$ is generated by
the divisors of non-zero rational functions on integral curves on
$E$. Each of these curves must lie inside some irreducible
component $E_i$ of $E$. 
On the other hand, if $C \subset E_i$ is an irreducible curve and 
$f \in k(C)^{\times}$ is a rational function, then
$\pi(C) = C$ and $\pi_*(\divf(f)) = \divf(f)$. This shows that
the map $\pi_*\colon \sR^F_0(E^N) \to \sR^F_0(E)$ is surjective.

We also note that the map ${\rm deg}\colon \CH^F_0(E^N) \to  H^0(E^N, \Z)$
is an isomorphism and the map
$\pi_*\colon {\CH^F_0(E^N)}_0 \to {\CH^F_0(E)}_0$ is surjective
(see \cite[Lemma~5.1]{MS}). In particular, $\CH^F_0(E) 
\stackrel{\rm deg}{\to} \Z^r$ is an isomorphism so that 
$\sR^F_0(E) = {\rm Ker}(\sZ^F_0(E) \xrightarrow{\rm deg} \Z^r)$. 
We conclude that the map 
$\pi_*\colon \sR^F_0(E^N) \to {\rm Ker}(\sZ^F_0(E) \xrightarrow{\rm deg} \Z^r)$
is surjective.

Since $\pi$ takes a smooth connected rational curve isomorphically
onto its image, we have reduced the proof of the lemma to the case where 
$E \to \P^{d-1}_k$ 
is a composition of monoidal transformations with point centers.
In this case, the lemma is well known. 
\end{proof}

\begin{comment}
In the commutative diagram with exact rows
\begin{equation}\label{eqn:rational-0}
\xymatrix@C1pc{
0 \ar[r] & {\CH^F_0(E^N)}_0 \ar[r] \ar[d] & \CH^F_0(E^N) \ar[r]^-{\rm deg} 
\ar@{->>}[d] & H^0(E^N, \Z) \ar@{->>}[d] \ar[r] & 0 \\
0 \ar[r] & {\CH^F_0(E)}_0 \ar[r] & \CH^F_0(E) \ar[r]^-{\rm deg} &
H^0(E, \Z) \ar[r] & 0,}
\end{equation}
the middle vertical arrow is clearly surjective. The left vertical arrow
is surjective by \cite[Lemma~5.1]{MS}. Since the degree map on the 
top is an isomorphism, it follows that so is the one on the bottom.

It is easy to see that the push-forward map 
$\CH^F_0(\amalg_i E_i) \to \CH^F_0(E)$ is surjective 
and the degree map $\oplus_i \CH^F_0(E_i) \to \oplus_i H^0(E_i, \Z)$ is an
isomorphism. Since the normalization 
map $\amalg_i E_i \to E$ takes a smooth connected 
rational curve isomorphically
to its image, it suffices to prove the lemma when $E \to \P^{d-1}_k$ 
is a composition of monoidal transformations with point centers.
%An induction on the number of blow-ups in this composition
%reduces to the case when $E \to \P^{d-1}_k$ is a single
%point blow-up. 
In this case, the lemma is well known.
\end{comment}

Let $X$ be a reduced quasi-projective scheme of dimension $d \ge 3$ over
an algebraically closed field $k$ and let $X \inj \P^n_k$ be a 
locally closed embedding. Let $Y \subset X$ be a nowhere dense closed
subset containing $X_{\rm sing}$ and let $W \subset X$ be a  
Cartier curve relative to $Y$. Let
$\ov{W}$ denote the scheme-theoretic closure of $W$ in $\P^n_k$ with the 
defining sheaf of ideals $\sI_{W}$.
Let $P$ be a property of hypersurface sections of $X$ in $\P^n_k$ which
contain $W$. Let $m, i \ge 1$ be two integers. 

\begin{defn}\label{defn:General} 
We shall say that a general choice of hypersurface $H_1$ in the linear system 
$|H^0(\P^n_k, \sI_W(m))|$ satisfies $P$ relative to $X$,
if there exists a dense open subset $\sU_1(W,m)
\subseteq |H^0(\P^n_k, \sI_W(m))|$ such that $X \not\subset H_1$ and
$X \cap H_1$ satisfies $P$, for every $H_1 \in \sU_1(W,m)$.

For $i \ge 2$, we shall say that a general choice of
hypersurfaces $\{H_1, \cdots , H_i\}$ in the linear system 
$|H^0(\P^n_k, \sI_W(m))|$ satisfies $P$ relative to $X$ if the
following hold.
\begin{enumerate}
\item
A general choice of hypersurface $H_1$ in the linear system 
$|H^0(\P^n_k, \sI_W(m))|$ satisfies $P$ relative to $X$ and $Y \cap H_1$
is a nowhere dense subset of $X \cap H_1$ containing $(X \cap H_1)_{\rm sing}$.

\item
For every $2 \le j \le i$ and every general choice of hypersurfaces
$\{H_1, \cdots , H_{j-1}\}$ in $|H^0(\P^n_k, \sI_W(m))|$,  (1)
holds for $X \cap H_1 \cap \cdots \cap H_{j-1}$.
\end{enumerate}
\end{defn}

Suppose that $P$ and $P'$ are two properties of the 
hypersurface sections of $X$ containing $W$.
Suppose that a general choice of hypersurfaces $\{H_1, \cdots , H_i\}$ 
in $|H^0(\P^n_k, \sI_W(m))|$ satisfies $P$ relative to $X$ and a 
general choice of hypersurfaces $\{H'_1, \cdots , H'_i\}$ in 
$|H^0(\P^n_k, \sI_W(m))|$ satisfies $P'$ relative to $X$.
It is then easy to see that a general choice of hypersurfaces 
$\{H_1, \cdots , H_i\}$ in $|H^0(\P^n_k, \sI_W(m))|$ will simultaneously
satisfy  $P$ and $P'$ relative to $X$.

\begin{lem}$(${\rm cf}. \cite[Lemma~1.3]{Levine-2}$)$\label{lem:Levine-Bertini}
For all $m$ sufficiently large, a general choice 
of hypersurfaces $\{H_1, \cdots , H_{d-1}\}$ in $|H^0(\P^n_k, \sI_W(m))|$ 
satisfies the following.
\begin{enumerate}
\item
If we let $W_j  =  X \cap H_1 \cap \cdots \cap H_j$, then
$W_j \subset X$ is a complete intersection along $W \cap Y$ for
$1 \le j \le d-1$.
\item
$W_j \setminus W \subset X \setminus W$ is a complete intersection 
closed subscheme which is reduced for $1 \le j \le d-2$.
\item
$\ov{(W_{d-1} \setminus W)} \cap W \cap Y = \emptyset$.
\end{enumerate}

In particular, $W \subset W_{d-2}$ is a complete intersection along 
$Y$.
\end{lem}
\begin{proof}
Let $\ov{X}$ be the closure of $X$ in $\P^n_k$ with the 
reduced structure and the defining sheaf of ideals $\sI_{\ov{X}}$.
Let $\sU_m(X)$ denote the complement of 
$|H^0(\P^n_k, \sI_{\ov{X}}(m))|$ in the linear system  
$|H^0(\P^n_k, \sI_W(m))|$.   
For $m \gg 0$, the restriction map
$\sU_m(X) \to |H^0(\ov{X}, {\sI_W}/{\sI_{\ov{X}}}(m))|$ is smooth and 
surjective with fibers $H^0(\P^n_k, \sI_{\ov{X}}(m))$. 
In particular, the inverse image of every open dense subset under this map
is open dense. We can therefore replace $\P^n_k$ by $\ov{X}$ in the 
proof of the lemma.

In this case, the assertions (1), (2) and (3) are 
straight-forward consequences of \cite[Lemma~1.3]{Levine-2} and its proof.
Levine actually proves a stronger version of (2).
He shows in a later part of his proof that $W_j$ is reduced for 
$1 \le j \le d-2$. In our application of this lemma in the proof of 
\thmref{thm:Tor-High-D} (see the choice of $Y'$ is Step ~4), 
we shall show that $W_{d-2}$ is reduced by a more 
refined choice of the hypersurfaces.

To prove the last assertion, note that the choice of $W_{d-1}$ inside $W_{d-2}$ 
is general, and hence the inclusion $W_{d-1} \subset W_{d-2}$ a complete
intersection along $W \cap Y$ by (1). On the other hand, it follows from (3) 
that $W \subset W_{d-1}$ is such that no irreducible component of $W_{d-1}$ 
which is different from a component of $W$, meets $W$ along $Y$.
In particular, at any point of $W \cap Y$, the ideal of
$W$ in $W_{d-2}$ coincides with that of $W_{d-1}$. 
Since $W \cap Y$ is finite, this ideal must be globally 
principal along $Y$. This finishes the proof.
\end{proof}

\begin{comment}
\begin{lem}\label{lem:Strict-transform}
Let $X$ be a reduced quasi-projective scheme of dimension $d \ge 2$ over 
an algebraically closed field $k$ and let $\pi\colon X' \to X$ be a 
composition of monoidal transformations whose centers are all smooth closed
points. Let $E \subset X'$ be the exceptional divisor.
Let $Y' \subset X'$ be a reduced closed subscheme whose no component is 
contained in $E$. Let $Y \subset X$ be the scheme-theoretic image of $Y$ under 
$\pi$.
Then the following hold.
\begin{enumerate}
\item
$Y \subset X$ is reduced.
\item
\end{enumerate}
\end{lem}
\end{comment}

\begin{lem}\label{lem:Desingularization}
Let $X$ be a reduced quasi-projective surface over an algebraically closed
field $k$. Let $T \subset X$ be a finite set of closed points
such that the semi-local ring $\sO_{X,T}$ is normal.
Let $f\colon X' \to X$ be a proper map which is an isomorphism over 
$X \setminus T$.
Assume that $X'$ is regular over an open neighborhood of $T$ in $X$.
Then $f$ is either an isomorphism or, there exists a closed subscheme 
(possibly non-reduced) $Z \subset X$ supported on $T$ such that it is the 
blow-up of $X$ along $Z$.
\end{lem}
\begin{proof}
By throwing away points of $T$ where $f$ is an isomorphism,
we can reduce to the case where $f$ is not an isomorphism at any point of $T$.
Since the normality is an open condition on $X$, there is an affine
open neighborhood of $T$ in $X$ which is normal.
Let us write $T = T_1 \amalg T_2$ such that $\sO_{X,T_1}$ is regular and
$\sO_{X,T_2}$ is singular. It follows from \cite[Exercise~II.7.11]{Hartshorne-2}
that there exists a sheaf of ideals $\sI_1 \subset \sO_X$ supported on $T_1$
such that $f^{-1}(X \setminus T_2)$ is the blow-up of $X \setminus T_2$
along $\sI_1|_{(X\setminus T_2)}$.
Let $f_1\colon X_1 \to X$ be the blow-up of $X$ along $\sI_1$ and
let $T'_2 = f^{-1}_1(T_2)$. We then have a factorization
$X' \xrightarrow{f_2} X_1 \xrightarrow{f_1} X$ of $f$.

Since $X_1$ is normal in a neighborhood of the finite set of closed
points $T'_2$ and, since $X'$ is regular over $T'_2$, it follows from 
\cite[Remark~C, p.~155]{Lipman} that there exists a sheaf of ideals
$\sI'_2 \subset \sO_{X_1}$ supported on $T'_2$ such that $f_2$ is the blow-up
of $X_1$ along $\sI'_2$. Since $f_1$ is an isomorphism over $T_2$,
one checks that $(f_1)_*(\sI_2')$ is a sheaf of ideals on $X$.
We let $\sI_2 = (f_1)_*(\sI_2')$ and $\sI = \sI_1 \cdot \sI_2$. 
If $Z$ is the closed subscheme of $X$ defined by $\sI \subset \sO_X$,
it becomes clear that it is supported on $T$ and,
$f$ is the blow-up of $X$ along $Z$.
\end{proof}

\begin{thm}\label{thm:Tor-High-D}
Let $X$ be a reduced affine scheme of dimension $d \ge 2$ over
an algebraically closed field $k$. 
Then $\CH^d(X)$ is uniquely divisible.
\end{thm}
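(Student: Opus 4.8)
The plan is to argue by induction on $d$, taking \thmref{thm:Main-affine} as the base case $d=2$ (with the blow-up $\pi$ there set equal to the identity, so that a reduced affine surface is itself covered), and to reduce the case $d \ge 3$ to dimension $d-1$ by intersecting $X$ with a general hypersurface section. The correct inductive statement should be phrased in the generality of \thmref{thm:Main-affine} — for affine schemes and for their blow-ups along finitely many smooth points with affine singular locus — since this is the class of objects the reduction produces. Throughout I would treat the two halves of ``uniquely divisible'' separately: divisibility of $\CH^d(X)$ and torsion-freeness of $\CH^d(X)$. The common engine is the push-forward along a hypersurface section $\iota:X' \inj X$ supplied by \lemref{lem:sing-curve}, and by \lemref{lem:PF} when a preliminary blow-up is needed, together with the Bertini theorems for singular schemes of \cite[\S~1]{Levine-2}, which produce a section $X'$ that is reduced with $X' \setminus X_{\rm sing}$ regular. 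Reducedness of the generic section I would check through \lemref{lem:red-finite}, by verifying that the local rings at the associated primes of $X'$ are reduced.

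For divisibility, let $z \in \CH^d(X)$ be represented by a $0$-cycle supported on finitely many smooth points. By Bertini I would choose a reduced hypersurface section $X'$ passing through these points with the points lying in its regular locus; then $z = \iota_*(z')$ for a class $z' \in \CH^{d-1}(X')$. Since $X'$ is a reduced affine scheme of dimension $d-1 \ge 2$, the inductive hypothesis makes $\CH^{d-1}(X')$ divisible, so $z' = n w'$ and hence $z = n\,\iota_*(w')$. No blow-up is required here, as a general section through smooth points is regular off $X_{\rm sing}$.

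For torsion-freeness, suppose $nz = 0$ in $\CH^d(X)$. By \lemref{lem:MLemma} I may write $nz = (f)_C$ for a single reduced Cartier curve $C$, whose support consequently contains that of $z$. The idea is to realize both the cycle $z$ and the defining relation $(f)_C$ on a single lower-dimensional section: I would produce a reduced hypersurface section $X' \supseteq C$ with $X' \setminus X_{\rm sing}$ regular, so that $C$ remains a Cartier curve in $X'$ relative to $X' \cap X_{\rm sing}$ and $z$ lifts to $z' \in \CH^{d-1}(X')$ with $n z' = (f)_C = 0$ there. Inductive torsion-freeness of $\CH^{d-1}(X')$ then forces $z' = 0$, whence $z = \iota_*(z') = 0$; \lemref{lem:ESV-Cor-1.4} lets me pass freely between the relative and absolute Chow groups along the way.

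The hard part will be the construction of this last section. Forcing $X'$ to contain the Cartier curve $C$ constrains the linear system of hypersurfaces and generically creates singularities of $X'$ along $C$, so a naive Bertini argument cannot keep $X' \setminus X_{\rm sing}$ regular. This is exactly where blow-ups enter, and why the surface case was proved in the non-affine generality of \thmref{thm:Main-affine}: I would first blow up $X$ along the finite locus responsible for the bad behaviour (a smooth centre, so that \lemref{lem:PF} and the isomorphism $\pi_*$ of \cite[Corollary~2.7]{ESV} identify the relevant Chow groups), arrange $C$ and the support of $z$ favourably on the blow-up, take the hypersurface section there, and then push forward. For $d=3$ this lands precisely on a blow-up of an affine surface along finitely many points with affine singular locus, which is the exact hypothesis of \thmref{thm:Main-affine}; for $d>3$ the same reduction is iterated within the inductive class. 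Verifying at each stage that the section is reduced, via \lemref{lem:red-finite}, and that its singular locus stays affine is the technical core of the argument.
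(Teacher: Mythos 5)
Your overall toolkit --- push-forward along Bertini sections, blow-ups at smooth points to tame the Cartier curve, \lemref{lem:red-finite} for reducedness of the section, and \thmref{thm:Main-affine} as the engine --- is exactly the paper's, but the paper does not induct on $d$: for divisibility it cuts directly to a complete intersection reduced affine \emph{surface} through the support of the cycle, and for torsion-freeness it intersects the blown-up $X'$ with $d-2$ general hypersurfaces in a single application of the Altman--Kleiman Bertini theorem, landing at once on a surface of the form covered by \thmref{thm:Main-affine}. Your one-dimension-at-a-time induction would force you to carry the class of blow-ups through every intermediate dimension, and note that the centers there are finite-support closed subschemes rather than reduced smooth points (the strict transform of a section under a point blow-up is the blow-up of its image along a possibly non-reduced subscheme, and composites of point blow-ups have thickened centers); this is bookkeeping the direct reduction avoids entirely.

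The genuine gap is in the torsion-freeness step, at the point you compress into ``arrange $C$ and the support of $z$ favourably on the blow-up.'' After blowing up $\pi\colon X' \to X$ at smooth points and replacing $C$ by its strict transform $D$, the relation does \emph{not} lift: for a lift $\alpha'$ of $\alpha$ supported on $D$ one only gets $\pi_*\bigl(n\alpha' - (f)_D\bigr) = 0$, so $\beta := n\alpha' - (f)_D$ is a degree-zero $0$-cycle supported on the exceptional divisor $E$ which is in general nonzero. Your sketch concludes $nz' = (f)_D = 0$ on the section, but in fact $nz' = (f)_D + \beta$ there, and $\beta$ is not a boundary on the section for free. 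The paper's fix is a genuinely additional idea: since $E$ is a disjoint union of successive blow-ups of $\P^{d-1}_k$, one writes $\beta = \sum_j (g_j)_{L_j}$ for smooth rational curves $L_j \subset E$, enlarges $D$ to a connected reduced curve $D' = D \cup (\cup_j L_j)$ having only ordinary double points over the smooth locus (via the argument of \cite[Lemma~5.2]{Bloch-1}), and then forces the Bertini section to contain all of $D'$ --- not merely $D$ and the support of $z$ --- so that the entire relation $n\alpha' = (f)_D + \sum_j (g_j)_{L_j}$ becomes a relation among Cartier curves on the resulting surface. It is $D'$, with its controlled embedding dimension, that makes the hypothesis of \cite[Theorem~7]{KL} checkable and keeps the section smooth off $X'_{\rm sing}$. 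Without this step the argument does not close.
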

\begin{proof}
In view of \thmref{thm:Main-affine}, we can assume $d \ge 3$.
To show that $\CH^d(X)$ is divisible, we let $\alpha \in \sZ^d(X, X_{\rm sing})$
be a 0-cycle. We fix a locally closed embedding $X \inj \P^N_k$.
It follows from \lemref{lem:Levine-Bertini} and \cite[Theorem~7]{KL} that
there exists a complete intersection reduced affine surface
$Y \subset X$ containing the support of $\alpha$ such that
$Y$ is smooth away from $X_{\rm sing}$.  

%It follows from \cite[Theorem~7]{KL} and \cite[Lemma~1.3]{Levine-2} that
%there exists a complete intersection reduced affine surface
%$Y \subset X$ containing the support of $\alpha$ such that
%$Y$ is smooth away from $X_{\rm sing}$. 

Since the inclusion
$i\colon Y \inj X$ is a complete intersection
and $Y$ is smooth away from $X_{\rm sing}$, it follows that $X_{\rm sing}
\cap Y = Y_{\rm sing}$. We conclude from ~\lemref{lem:PF} that there exists a 
push-forward map $i_*\colon \CH^2(Y) \to \CH^d(X)$ which contains $\alpha$
in its image. Since $\CH^2(Y)$ is divisible by
\thmref{thm:Main-affine}, we conclude that $\CH^d(X)$ is divisible.
The rest of the proof will be devoted to showing that $\CH^d(X)$
is torsion-free.

Let $\alpha \in \sZ^d(X, X_{\rm sing})$ be such that $n\alpha = 0$ in
$\CH^d(X)$ for some integer $n \ge 1$. Equivalently,
$n\alpha \in \sR^d(X, X_{\rm sing})$.
We can thus use \lemref{lem:MLemma} to find a (reduced) Cartier curve
$C$ on $X$ and a function $f \in \sO^{\times}_{C, S}$ such that $n\alpha = 
(f)_{C}$, where $S = C \cap X_{\rm sing}$.
Since that part of $n\alpha$ supported on any connected component of
$C$ is also of the form
$n\alpha' = (f')_{C'}$ for some Cartier curve $C'$ and some $f'$, 
we can assume that $C$ is connected. Let $\{C_1, \cdots , C_r\}$ denote
the set of irreducible components of $C$. 
We set $U = X \setminus X_{\rm sing}$ so that $U$ is smooth and a dense open 
subscheme of $X$.
%In particular, it is a dense open subset of the 
%disjoint union of the smooth loci of the $d$-dimensional components of $X$.

Our aim is to show that $\alpha \in \sR^d(X, X_{\rm sing})$, by
reducing to the case where $X$ is replaced by a surface.
Ideally, one would like to find a complete intersection surface
in $X$ which supports $\alpha$ and on which, $n\alpha$ is rationally equivalent 
to zero. Since this can not be achieved in general, the strategy is to use 
Bloch's trick \cite{Bloch-1} 
of finding such a surface after a series of monoidal
transformations of $X$. What makes this trick work in our case is 
a combination of Lemma~\ref{lem:PF} and \thmref{thm:Main-affine}.
%together with the fact that
%a blow-up along a smooth closed point induces a push-forward map
%between the Levine-Weibel Chow groups of 0-cycles of singular schemes.  
This strategy is executed as follows. 

\vskip .3cm

{\bf STEP~1}: Following an argument of Bloch \cite[Chapter~5]{Bloch-1}, we
let $\pi \colon X' \to X$ be a successive blow-up at smooth points such that the
following hold.
\begin{enumerate}
\item
The strict transform $D_i$ of each $C_i$ is smooth at every point
of $D_i \cap \pi^{-1}(U)$.
\item
$D_i \cap D_j \cap \pi^{-1}(U) = \emptyset$ for $i \neq j$.
\item
Each $D_i$ intersects the exceptional divisor $E$ (which is reduced) 
transversely at smooth points of $X'$.
\end{enumerate}

It is clear that there exists a finite set of distinct
blown-up closed points
$T = \{x_1, \cdots , x_n\} \subset U$ such that 
$\pi\colon \pi^{-1}(X\setminus T) \to X \setminus T$
is an isomorphism. In particular, $\pi\colon X'_{\rm sing} \to X_{\rm sing}$
is an isomorphism. This in turn implies that $X'_{\rm sing}$ is an affine
scheme of dimension at most $d-1$. Set $U' = X' \setminus X'_{\rm sing}
= \pi^{-1}(U)$. Let $D$ denote the strict transform of $C$ with
components $\{D_1, \cdots, D_r\}$.

Since $\pi$ is an isomorphism over an open neighborhood of $X_{\rm sing}$,
it follows that $S' := \pi^{-1}(S) \simeq S$ and the map $D \to C$ is an
isomorphism along $X_{\rm sing}$. In particular, 
$\pi^*\colon \sO_{D, S} \xrightarrow{\simeq} \sO_{D', S'}$
and $\pi_*((\pi^*(f))_{D}) = (f)_{C} = n\alpha$. 
In the rest of the proof, we shall often abuse the
notations by identifying 
$S, \ \sO_{D, S}$ and $f$ with $S', \ \sO_{D', S'}$ and $\pi^*(f)$,
respectively. 
Since ${\rm Supp}(\alpha) \subset C$, we can find
$\alpha' \in \sZ^d(X', X'_{\rm sing})$ supported on $D$ 
such that $\pi_*(\alpha') = \alpha$.
This implies that 
$\pi_*(n\alpha' - (f)_{D}) = 0$.
Setting $\beta = n\alpha' - (f)_{D}$, we get $\pi_*(\beta) = 0$ in the
cycle group $\sZ^d(X, X_{\rm sing})$.

\vskip .2cm

{\bf STEP~2}:
We can now write $\beta = \stackrel{n}{\underset{i =0}\sum} \beta_i$,
where $\beta_i$ is a 0-cycle on $X'$ supported on $\pi^{-1}(x_i)$
for $1 \le i \le n$ and $\beta_0$ is supported on the complement of the
exceptional divisor $E$. We then get 
$\stackrel{n}{\underset{i =0}\sum} \pi_*(\beta_i) = 0$ in 
$\sZ^d(X, X_{\rm sing}) \subseteq \sZ^d_F(X)$.  
Since all closed points of $T$ are distinct and the support of $\pi_*(\beta_0)$
is disjoint from $T$, a repeated application of \lemref{lem:Fulton*-0} 
shows that $\pi_*(\beta_i) = 0$
for all $0 \le i \le n$. Since $\pi$ is an isomorphism away from 
$T = \{x_1, \cdots , x_n\}$, we must have $\beta_0 = 0$.
We can therefore assume that $\beta$ is a 0-cycle on $E$.

We now note that each $\pi^{-1}(\{x_i\})$ is a $(d-1)$-dimensional projective 
scheme whose irreducible components are successive point blow-ups of 
$\P^{d-1}_k$. Moreover, we have 
$\pi_*(\beta_i) = 0$ for the push-forward map 
$\pi_*\colon \sZ^d_F(\pi^{-1}(\{x_i\})) \to \Z$, induced by 
$\pi\colon \pi^{-1}(\{x_i\})
\to \Spec(k(x_i)) \xrightarrow{\simeq} \Spec(k)$. 
But this means that ${\rm deg}(\beta_i) = 0$.
Taking the sum, we get
${\rm deg}(\beta) = \stackrel{n}{\underset{i =1}\sum} {\rm deg}(\beta_i) = 0$.
An application of \lemref{lem:rational} now shows that 
there are finitely many smooth projective rational curves 
$L_j \subset E$ and rational functions $g_j \in k(L_j)$ such that
$\beta = \stackrel{s}{\underset{j = 1}\sum}  (g_j)_{L_j}$.

\vskip .2cm

{\bf STEP~3}: Using the next step in the 
argument of Bloch (see \cite[Lemma~5.2]{Bloch-1}), after possibly further
blow-up of $X'$ along the closed points supported on $E$, we can assume
that no more than two $L_j$'s meet at a point and they intersect
$D$ transversely (note that $D$ is smooth along $E$).
In particular, in combination with (1) - (3) above, this implies that
$D':= D \cup (\cup_j L_j)$ is a 
%connected
reduced curve with
following properties (see line 4 from the bottom of \cite[p.~5.2]{Bloch-1}).

\begin{enumerate}
\item
Each component of $D' \cap U'$ is smooth.
\item
$D' \cap U'$ has only ordinary double point singularities, i.e., 
exactly two components of $D' \cap U'$ meet at any of its 
singular points with distinct tangent directions. 
\end{enumerate}

In particular, the embedding 
dimension of $D'$ at each of its singular points lying over $U$ is two.
Furthermore, $D' \cap X'_{\rm sing} = 
(D' \setminus (\cup_j L_j)) \cap X'_{\rm sing} = D \cap X'_{\rm sing}$.
This implies that $D'$ is a Cartier curve on $X'$.

\vskip .2cm

{\bf STEP~4}: We now fix a locally closed embedding $X' \inj \P^n_k$.
Since $X'$ is reduced of dimension $d \ge 3$, 
and since $D' \subset X'$ is a local complete intersection along 
$X'_{\rm sing}$, \lemref{lem:Levine-Bertini} applies. 
If for $m \gg 0$, we let $\{H_1, \cdots , H_{d-1}\}$ be a 
general choice of hypersurfaces in $|H^0(\P^n_k, \sI_{D'}(m))|$ 
satisfying \lemref{lem:Levine-Bertini}, then the scheme-theoretic
intersection $Y' = X' \cap H_1 \cap \cdots \cap H_{d-2}$ has the
property that $Y' \subset X'$ is a
complete intersection away from $D'$ and along $D' \cap X'_{\rm sing}$.
Furthermore, $Y' \setminus D'$ is reduced and
$D' \subset Y'$ is a complete intersection along $X'_{\rm sing}$.

We shall now refine the choice of $Y'$ using the Bertini theorems
of Altman and Kleiman \cite{KL}. 
We write $W':= D'\cap U'$ (recall that $U' = X' \setminus X'_{\rm sing}$)
and let ${W'}(\Omega^1_{D'}, e)$ denote the 
locally closed subset of points in $W'$ where the embedding dimension
of $W'$ is $e$. It follows from the above description of $D'$
that ${\underset{e}{\rm max}} \{\dim({W'}(\Omega^1_{D'}, e)) + e\} \le 2$.
In this case, \cite[Theorem~7]{KL} says that for all $m \gg 0$, 
a general choice of hypersurfaces 
$\{H_1, \cdots , H_{d-2}\}$ in $|H^0(\P^n_k, \sI_{D'}(m))|$
has the property that $U' \cap H_1 \cap \cdots \cap H_j$ is smooth
for $1 \le j \le d-2$.   
We conclude that for all $m \gg 0$ and a general choice of 
hypersurfaces $\{H_1, \cdots , H_{d-2}\}$ in $|H^0(\P^n_k, \sI_{D'}(m))|$,
the scheme-theoretic intersection $Y' = X' \cap H_1 \cap \cdots \cap H_{d-2}$
has the following properties.
\begin{enumerate}
\item
$\dim(Y') = 2$.
\item
$Y' \subset X'$ is a complete intersection away from $D'$ and along
$S'$.
\item
$Y'$ is reduced away from $S'$.
\item
$D' \subset Y'$ is a local complete intersection along $X'_{\rm sing}$.
\item
$Y' \cap U'$ is smooth.
\end{enumerate}

For such a surface $Y'$, (4) says that its local rings
at $D' \cap X'_{\rm sing} \ (= S')$ contain regular elements. 
In particular, $S'$ contains no embedded prime of $Y'$. We
conclude from (3) and \lemref{lem:red-finite} that this surface $Y'$
must be reduced. Furthermore, a combination of (2)  and (5) shows that
$Y' \subset X'$ must be a complete intersection.
Let $\iota\colon Y' \inj X'$ denote this inclusion.

\vskip .2cm

{\bf STEP~5}:
If we let $Z' = Y' \cap X'_{\rm sing} = (\pi \circ \iota)^{-1}(X_{\rm sing})$, 
it follows from the above constructions that $Y'_{\rm sing} \subseteq Z'$ and
$\dim(Z') \le 1$. The cycle $\alpha'$ lies in $\sZ^2(Y', Z')$
under the inclusion $\iota_*\colon \sZ^2(Y', Z') \inj \sZ^d(X', X'_{\rm sing})$
so that we can write $\alpha' = \iota_*(\alpha')$. Moreover, 
we have $n\alpha' = (f)_D + \sum_j (g_j)_{L_j}$ in $\sZ^2(Y', Z')$,
where $D$ and $L_j$'s are Cartier curves in $Y'$.
In particular, $n\alpha' = 0$ in $\CH^2(Y')$.

Our final goal is to apply \thmref{thm:Main-affine} to
show that $\alpha'$ dies in $\CH^2(Y')$. Before we do so in the next step, we
observe here that no component of $Y'$ can be contained in $E$. 
In particular, the map $Y' \to \pi(Y')$ is birational.
Indeed, as $T \subset C = \pi(D') = \pi(D)$, the exceptional divisor $E$ can
intersect only those components of $U' = X' \setminus X'_{\rm sing}$ 
which meet $D$.
On the other hand, if $U'_i$ is a connected component of $U'$ which meets $D$,
then we must have $\dim(D \cap U'_i) = 1$ (since $D \cap X'_{\rm sing}$ is 
finite) and $\dim(D \cap U'_i\cap E) = 0$.
Since $Y' \cap U'_i$ is smooth connected and contains $D \cap U'_i$,
the claim follows. 

\enlargethispage{20pt}
\vskip .2cm

{\bf STEP~6}:
We let $Y$ denote the scheme-theoretic image of the map $Y' \to X$.
By definition, $Y$ is the closed subscheme of $X$ determined by the sheaf of
ideals $\sI_Y = {\rm Ker}(\sO_X \to \pi_*(\sO_{Y'}))$.
Since $\pi$ is proper and $Y'$ is reduced, it follows from \cite[Tag~01R5]{SP}
that $Y$ coincides with $\pi(Y')$ with the reduced induced scheme structure.
One can also see directly that $Y$ is reduced because there is an
injection $\sO_Y \simeq {\sO_X}/{\sI_Y} \inj \pi_*(\sO_{Y'})$ and
the latter is clearly a sheaf of reduced rings on $X$ since $\sO_{Y'}$ is
a sheaf of reduced rings on $X'$.
Let $\pi'\colon Y' \to Y$ denote the restriction of $\pi$ to $Y'$
and let $W = \pi'(Y'_{\rm sing})$.

Recall from STEP~1 that $T$ is the finite set of closed points
in $C \subset X$ such that $\pi$ is an isomorphism away from $T$.
Since $T \subset \pi(D)$ and $D \subset Y'$, it follows that $T \subset Y$.
Since $X'_{\rm sing} \xrightarrow{\simeq} X_{\rm sing}$ and $Y' \setminus 
X'_{\rm sing}$ is smooth by STEP~4, it follows that
$W \subset Y$ is a closed subset such that
$W \cap T = \emptyset$ and $\pi'^{-1}(Y \setminus W)$ is smooth. 
Since $Y$ is affine, we conclude that $\pi'\colon Y' \to Y$ is a projective 
birational morphism of reduced quasi-projective surfaces 
which satisfies all conditions of \lemref{lem:surface-blow-up},
except that we do not know if $\pi'$ is a blow up with center supported on $T$.
One can show that this is indeed the case, but we shall not use this here. 
We use the following trick instead.

We let $Y_1 = \Spec(\pi'_*(\sO_{Y'}))$ and let $Y' \xrightarrow{p_1} 
Y_1 \xrightarrow{p_2} Y$ be the Stein factorization of $\pi'$
(see \cite[Corollary~III.11.5]{Hartshorne-2}).
Since $Y'$ is regular over a neighborhood of $T$, it is easy to check that 
$Y_1 \xrightarrow{p_2} Y$ coincides with the 
normalization of $Y$ at all points of $T$ 
(see \cite[Remark~D, p.~155]{Lipman}). 
Since $p_2$ is finite, $Y_1$ must be affine. We let $W_1 = p^{-1}_2(W)$
and $T_1 = p^{-1}_2(T)$. 

%\enlargethispage{20pt}

We now have a projective birational morphism $p_1\colon Y' \to Y_1$,
where $Y_1$ is a reduced affine surface
with disjoint closed subsets $W_1$ and $T_1$ such that $T_1$ is finite
and $\sO_{Y_1, T_1}$ is normal. Moreover, $p_1$ is isomorphism over 
$Y_1 \setminus T_1$ (because $\pi'$ is) and $p^{-1}_1(Y_1 \setminus W_1)$
is smooth. We conclude from \lemref{lem:Desingularization} that
$p_1$ is either an isomorphism or satisfies all conditions of 
\lemref{lem:surface-blow-up}. 

In both cases, it follows from \thmref{thm:Main-affine} that 
$\alpha' = 0$ in $\CH^2(Y')$.
On the other hand, as $Y' \subset X'$ is a complete intersection,
every singular point of $X'$ along $Y'$ must be a singular point
of $Y'$ as well. It follows that $Z'$ coincides with $Y'_{\sing}$ as
a closed subset of $Y'$.
We conclude that $\alpha' = 0$ in $\CH^2(Y', Z')$.
Finally, it follows  from \lemref{lem:PF} that 
$\alpha = \pi_* \circ \iota_*(\alpha')= 0$. 
This finishes the proof of the theorem.
\end{proof}

\begin{comment}
We now note that $\pi$ is a composition of blow-ups of closed points.
In particular, there exists a closed subscheme $Z \subset X$
with support $T$ such that $X' = {\rm Bl}_Z(X)$
(see, for instance, \cite[Tag~085P]{SP}). 
As $Y$ is reduced, its blow-up along the scheme-theoretic intersection 
$Y \cap Z$ has to be reduced by \cite[Tag~085P]{SP}).
It must therefore coincide with $Y'$.
%\end{comment}
%\vskip .3cm

%{\sl Solution to Murthy's problem:}
%As the first application of \thmref{thm:Tor-High-D}, we can now
%answer Question~\ref{question:Murthy-Open} of Murthy  as follows.

\begin{cor}\label{cor:Murthy*}
Let $A$ be a reduced affine algebra of dimension $d \ge 2$ over
an algebraically closed field $k$. 
Then the cycle class map $cyc_A\colon \CH^d(A) \to F^dK_0(A)$ is an isomorphism.
In particular, $F^dK_0(A)$ is torsion-free.
\end{cor}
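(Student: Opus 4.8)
The plan is to combine the divisibility theorem just proved with the known structural property of the cycle class map. First I would recall that, by the discussion following~\eqref{eqn:C-class}, the group $F^dK_0(A)$ is \emph{defined} to be the image of $cyc_A$; hence surjectivity of $cyc_A: \CH^d(A) \to F^dK_0(A)$ is automatic, and the entire content of the statement reduces to the injectivity of $cyc_A$ together with the torsion-freeness of its target.

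Next I would invoke \cite[Corollary~5.4]{Levine-3} --- the same input used in the proof of \corref{cor:BSC-0} --- to conclude that $\Ker(cyc_A)$ is a torsion group. This is the one ingredient imported from outside the present paper, and everything then turns on intersecting it with the divisibility already established: by \thmref{thm:Tor-High-D} the group $\CH^d(A)$ is uniquely divisible, in particular torsion-free. A torsion subgroup of a torsion-free group is trivial, so $\Ker(cyc_A) = 0$. Combined with the automatic surjectivity, this gives the desired isomorphism $\CH^d(A) \xrightarrow{\simeq} F^dK_0(A)$, and transporting torsion-freeness across it shows that $F^dK_0(A)$ is torsion-free, which is precisely the positive answer to Murthy's Question~\ref{question:Murthy-Open}.

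The proof will be short because all the real work has been front-loaded into \thmref{thm:Tor-High-D}; the only genuinely new move is the observation that torsion-freeness of the Chow group automatically annihilates the torsion kernel furnished by Levine. The one point I would check carefully is that \cite[Corollary~5.4]{Levine-3} applies to an arbitrary reduced affine $k$-algebra of dimension $d \ge 2$, with no normality or isolated-singularity hypothesis imposed; once that generality is confirmed, no remaining obstacle stands in the way.
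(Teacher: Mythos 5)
Your proposal is correct and follows essentially the same route as the paper: surjectivity by definition of $F^dK_0(A)$, the input from \cite[Corollary~5.4]{Levine-3}, and then \thmref{thm:Tor-High-D} to kill the kernel. The only cosmetic difference is that the paper quotes Levine's result in the form of a top Chern class map $c_{d,A}$ with $c_{d,A}\circ cyc_A = (d-1)!$ (so the kernel is $(d-1)!$-torsion, and unique divisibility of $\CH^d(A)$ finishes the argument), which is exactly the statement you paraphrase as ``$\Ker(cyc_A)$ is torsion''.
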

\begin{proof}
This is a well known consequence of \thmref{thm:Tor-High-D}
and the proof is identical to the smooth case (see \cite[Theorem~2.14]{Murthy}).

The map $cyc_A$ is surjective by definition.
The top Chern class map $c_{d,A}\colon F^dK_0(A) \to \CH^d(A)$
has the property that $c_{d,A} \circ cyc_A$ is multiplication by $(d-1)!$
(see \cite[Corollary~5.4]{Levine-S} and \cite[Corollary~2.7]{Levine-2}).
\thmref{thm:Tor-High-D} then says that ${\rm Ker}( cyc_A)$ must be zero. 
\end{proof}
\end{comment}
%\vskip .3cm

\section{Proof of Murthy's conjecture}\label{sec:ECG}
Let $k$ be an algebraically closed field of any characteristic.
In this section, we shall apply \thmref{thm:Tor-High-D} to 
prove Murthy's conjecture, and to 
show that the Euler class groups of affine $k$-algebras 
coincide with their Levine-Weibel Chow groups.
We shall also give the proof of \corref{cor:Intro-Res-5}.

The Euler class groups of affine algebras over a field was invented by Nori and 
extensively studied by Bhatwadekar-R. Sridharan.
The most notable feature of the Euler class group $E(A)$ is that 
every projective $A$-module of top rank has an Euler class in $E(A)$,
and the vanishing of this class characterizes the existence of a rank one
free summand of the projective module. 

An open question of Bhatwadekar-R. Sridharan \cite[Remark~3.3]{BS-4} asks
if the weak Euler class groups of smooth algebras
coincide with the Chow groups of
0-cycles. Over an algebraically closed field, this was
shown by Bhatwadekar-R. Sridharan \cite[Corollary~4.15]{BS-1}.
We shall use \thmref{thm:Tor-High-D} to give a direct extension of this
result to arbitrary affine algebras.

\subsection{Euler and weak Euler class groups}
Let $A$ be a reduced affine algebra over $k$ of dimension $d \ge 2$.
%such that $X = \Spec(A)$ is connected.
We recall the following definition from \cite[\S~2]{BS-2}.

\enlargethispage{20pt}

\begin{defn}\label{defn:EGC-0}
Let $F(A)$ be the free abelian group on the set of pairs $(J, \omega_J)$, where:
\begin{enumerate} 
\item
$J$ is an ideal of $A$ of height $d$.
\item
$J$ is $\fm$-primary for some maximal ideal $\fm$ of $A$.
\item
$\omega_J\colon (A/J)^d \surj J/J^2$ is a surjective map of $A/J$-modules.
\end{enumerate}

Let $[(J, \omega_J)]$ denote the class of $(J, \omega_J)$ in $F(A)$.
Given an ideal $I$ of height $d$, let $I = \cap_i J_i$ be an
irredundant primary decomposition of $I$, where each $J_i$ is $\fm_i$-primary 
for some maximal ideal $\fm_i$ of $A$. 
If $\omega_I\colon  (A/I)^d \surj I/{I^2}$ is a surjection, then
the Chinese remainder theorem shows that $\omega_I$ uniquely
defines a surjection $\omega_{J_i}\colon (A/{J_i})^d \surj {J_i}/{J^2_i}$.
In particular, each $[(J_i, \omega_{J_i})] \in F(A)$.
We write $[(I, \omega_I)] = \sum_i[(J_i, \omega_{J_i})] \in F(A)$.
Note that this is well defined because the irredundant primary decomposition
$I = \cap_i J_i$ is unique in our case by \cite[Theorem~4.10]{AMac}.

Let $K(A)$ be the subgroup of $F(A)$ generated by the elements of the type
$[(I, \omega_I)]$, where $I \subset A$ is an ideal of height $d$ with an 
$A$-linear surjection $\wt{\omega}_I\colon A^d \surj I$
such that the diagram 
\[
\xymatrix@C1pc{
A^d \ar@{->>}[r]^{\wt{\omega}_I} \ar[d] & I \ar[d] \\
(A/I)^d \ar@{->>}[r]_{\omega_I} & I/{I^2}}
\]
commutes.
The quotient $E(A):= {F(A)}/{K(A)}$ is called the {\sl Euler class group} of 
$A$.
\end{defn}

\begin{defn}\label{defn:WEGC-0}
Let $G(A)$ be the free abelian group on the set of ideals $J \subset A$
such that:
\begin{enumerate} 
\item
$J$ is of height $d$.
\item
$J$ is $\fm$-primary for some maximal ideal $\fm$ of $A$.
\item
There is a surjection of $A/J$-modules $\omega_J\colon (A/J)^d \surj J/J^2$.
\end{enumerate}

Let $[J]$ denote the class of $J$ in $G(A)$.
Given an ideal $I$ of height $d$, let $I = \cap_i J_i$ be an
irredundant primary decomposition of $I$, where each $J_i$ is $\fm_i$-primary 
for some maximal ideal $\fm_i$ of $A$. 
If $I/{I^2}$ is generated by $d$ elements,
the Chinese remainder theorem shows that each $[J_i] \in G(A)$.
We write $[I] = \sum_i[J_i] \in G(A)$.

Let $H(A)$ be the subgroup of $G(A)$ generated by the elements of the type
$[I]$, where $I \subset A$ is an ideal of height $d$ with an $A$-linear
surjection $\omega_I\colon A^d \surj I$. The quotient ${G(A)}/{H(A)}$ is called the
{\sl weak Euler class group} of $A$ and is denoted by $E_0(A)$.
\end{defn}

There is an evident surjection $E(A) \surj E_0(A)$. But the following
stronger result holds. 

\begin{lem}\label{lem:ECG-SW}
Let $A$ be a reduced affine algebra of dimension $d \ge 2$
over an algebraically closed field $k$.
Then the map $E(A) \to E_0(A)$ is an isomorphism.
\end{lem}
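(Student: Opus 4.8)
The plan is to prove injectivity of the evident surjection $E(A)\to E_0(A)$, after which the two groups are identified. First I would record the local structure that every generator carries: for $(J,\omega_J)\in F(A)$ the ideal $J$ is $\fm$-primary of height $d$, so $A/J$ is an Artinian local $k$-algebra with residue field $k$ (here $k=\ov{k}$ is used). Since $\mu(J_\fm/J_\fm^2)\ge d$ by Krull's height theorem while $\omega_J$ presents $J/J^2$ as a quotient of $(A/J)^d$, Nakayama forces $\omega_J$ to be an isomorphism and $J/J^2$ to be free of rank $d$ over $A/J$. Comparing the presentations $E(A)=F(A)/K(A)$ and $E_0(A)=G(A)/H(A)$ through the forgetful maps $F(A)\surj G(A)$ and $K(A)\surj H(A)$, a snake-lemma chase identifies $\Ker(E(A)\to E_0(A))$ with the subgroup generated by the differences $[(J,\omega_J)]-[(J,\omega'_J)]$ of two orientations on a single ideal $J$. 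So it suffices to show the class of a generator in $E(A)$ is independent of the orientation.

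Next I would reduce to the determinant of the change of orientation. Two orientations differ by $\sigma\in\GL_d(A/J)$, say $\omega'_J=\omega_J\circ\sigma$. As $A/J$ is local and $d\ge 2$ we have $\SL_d(A/J)=E_d(A/J)$, so $\sigma=\epsilon\cdot\mathrm{diag}(u,1,\dots,1)$ with $\epsilon\in E_d(A/J)$ and $u=\det\sigma\in(A/J)^\times$. The elementary factor is harmless: $E_d(A)\to E_d(A/J)$ is surjective, so $\epsilon$ lifts to an automorphism of $A^d$, and composing a global surjection $A^d\surj I$ (the data generating $K(A)$) with it shows twisting by $\epsilon$ does not change the class. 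Thus everything reduces to vanishing of the orientation symbol $c(u):=[(J,\mathrm{diag}(u,1,\dots,1)\omega_J)]-[(J,\omega_J)]\in E(A)$ for all $u\in(A/J)^\times$. One checks $c$ is a homomorphism $(A/J)^\times\to E(A)$, and the same global-automorphism argument applied to $\mathrm{diag}(\tilde u,1,\dots,1)$ with $\tilde u\in A^\times$ shows $c$ annihilates the image of $A^\times\to(A/J)^\times$.

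Now I would exploit $k=\ov{k}$. Since $A/\fm=k$ the residue map splits, giving $(A/J)^\times=k^\times\times(1+\fn)$ with $\fn=\fm/J$ the nilpotent maximal ideal. The scalar factor $k^\times\subset A^\times$ consists of constants, which lift globally, so $c$ vanishes on $k^\times$ and factors through the unipotent part $1+\fn$. It remains to kill $c$ on $1+\fn$; here the algebraically closed hypothesis is indispensable (the statement fails over $\R$). The approach I would take is the Bhatwadekar--Sridharan addition/subtraction and moving principles: for $u=1+n$ one produces an ideal $J'$ of height $d$ comaximal with $J$, with a compatible orientation, so that after replacing $J$ by $J\cap J'$ the twist by $u$ is realized by a genuinely global unit and hence lands in $K(A)$. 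The perfectness of $k$ and divisibility of $k^\times$, together with the uniquely divisible (char $0$) or $p$-group (char $p$) structure of $1+\fn$, are precisely what make this realization available.

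The main obstacle is exactly this last step: trivializing $c$ on the unipotent units $1+\fn$. All the prior reductions are formal, but here the obstruction genuinely lives in $(A/J)^\times/\mathrm{im}(A^\times)$, which is typically nonzero for a \emph{single} ideal and is only annihilated after passing across comaximal ideals via the Euler class relations — the step where $k=\ov{k}$ enters essentially. Once $c\equiv 0$ is established, every difference $[(J,\omega_J)]-[(J,\omega'_J)]$ vanishes, the kernel is trivial, and $E(A)\xrightarrow{\simeq}E_0(A)$.
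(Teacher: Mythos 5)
Your formal reductions are sound and essentially repackage the standard Bhatwadekar--Sridharan analysis: the snake-lemma identification of ${\rm Ker}(E(A)\to E_0(A))$ with the subgroup generated by orientation differences, the factorization of a change of orientation as an elementary matrix times $\mathrm{diag}(u,1,\dots,1)$, and the vanishing of the resulting symbol on units lifting to $A^\times$ are all in substance the content of \cite[Lemma~3.3]{BS-2} and the proof of \cite[Corollary~7.9]{BS-3}, which the paper quotes to describe the kernel instead as generated by the classes $[(I,\omega_I)]$ with $I$ generated by $d$ elements. Two caveats even here. First, Nakayama does \emph{not} force $\omega_J$ to be an isomorphism or $J/J^2$ to be free of rank $d$ (that would require $A_\fm$ to be Cohen--Macaulay, which is not assumed); what is true, and all you need, is that any two orientations differ by some $\sigma\in\GL_d(A/J)$, obtained by lifting one surjection through the other and applying Nakayama to $\sigma$ itself. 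Second, the "harmlessness" of elementary matrices and of units lifted from $A^\times$ is not quite the one-line composition you describe, since the ideal $J$ itself need not admit any global surjection $A^d\surj J$; these invariance statements are themselves lemmas of \cite{BS-3} proved via the addition principle. They are available, so this is a citation issue rather than an error.

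The genuine gap is exactly the step you flag as the main obstacle: killing the symbol $c(u)$ for units $u\in 1+\fn$ that do not lift to $A^\times$. Your proposal offers no argument here beyond an appeal to "addition/subtraction and moving principles," and the mechanism you suggest --- divisibility of $k^\times$ together with the uniquely divisible or $p$-group structure of $1+\fn$ --- is not what makes this work. Divisibility of $1+\fn$ would only show that $c(1+\fn)$ is a divisible subgroup of $E(A)$, and $E(A)$ has no shortage of divisible subgroups (indeed $E(A)\simeq\CH^d(A)$ is itself divisible by the main theorem of the paper, which in any case cannot be used here without circularity). The actual ingredient, both in the paper's proof and in the smooth-case precursor \cite[Corollary~4.15]{BS-1}, is Suslin's cancellation theorem \cite{Suslin} for projective modules of rank $d=\dim(A)$ over affine algebras over an algebraically closed field, together with \cite[Lemma~2.4]{Srid}. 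The paper disposes of the whole issue by invoking \cite[Proposition~5.1]{Das} --- that $[(I,\omega_I)]=0$ in $E(A)$ whenever $I$ is generated by $d$ elements, for \emph{any} orientation $\omega_I$ --- and observing that its proof uses only these two inputs, both valid over any algebraically closed field in any characteristic. Without cancellation of top-rank projectives, or an equivalent input, your argument does not close.
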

\begin{proof}
This is a well known consequence of various results of
\cite{BS-1}, \cite{BS-2} and \cite{BS-3}. We only give a sketch.
It follows from \cite[Lemma~3.3]{BS-2} (see also the proof of
\cite[Corollary~7.9]{BS-3}) that the kernel of the 
map $E(A) \surj E_0(A)$ is generated by the elements of the type
$(I, \omega_I)$, where $I = (a_1, \cdots , a_d)$.

Hence, we need to show that if $I$ is an ideal of height $d$
generated by $d$ elements and if $\omega_I\colon  (A/I)^d \surj I/I^2$ is a given
surjection, then $[(I,\omega_I)] = 0$ in $E(A)$.
But this is proven in \cite[Proposition~5.1]{Das}.
This result is stated in {\sl loc. cit.} for fields of characteristic
zero, but the proof works without any change for any algebraically closed
field. This is because the only two results needed in the proof are
Suslin's cancellation theorem \cite{Suslin} and \cite[Lemma~2.4]{Srid}, 
both of which hold for reduced affine algebras 
when the base field is algebraically closed of any characteristic.
\end{proof}

\subsection{Euler class group and Chow group}
To connect the Euler class group with the Chow group of 0-cycles, we
define a variant of the weak Euler class group as follows. 
When $A$ is a smooth, this variant was considered  
by Bhatwadekar-R. Sridharan \cite[Definition~2.2]{BS-4}.

Let $A$ be a reduced affine algebra of dimension $d \ge 2$
over an algebraically closed field $k$. Let $G_s(A)$ denote the
free abelian group on the set of smooth maximal ideals of $A$.
We shall say that an ideal $I \subset A$ of height $d$ is {\sl regular}
if it is of the form
$I = \cap_i \fm_i$, where each $\fm_i$ is a smooth maximal ideal with
$\fm_i \neq \fm_j$ for $i \neq j$.  
Given a regular ideal $I \subset A$, we set $[I] = \sum_i [\fm_i] \in
G_s(A)$. Let $H_s(A)$ denote the subgroup of $G_s(A)$ generated by
elements $[I]$ as above such that $I$ is a complete intersection in $A$.
We set $E_s(A) = {G_s(A)}/{H_s(A)}$.

There is an evident group homomorphism $\phi_A\colon E_s(A) \to E_0(A)$.

\begin{lem}\label{lem:MS-Bertini}
The map $\phi_A\colon  E_s(A) \to E_0(A)$ is an isomorphism.
\end{lem}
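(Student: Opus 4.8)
The plan is to show that the homomorphism $\phi_A$ is both surjective and injective, following the strategy used for smooth $A$ in \cite[Definition~2.2]{BS-2} but inserting a general-position step that moves $0$-cycles onto the smooth locus, which is legitimate because the singular locus of $\Spec(A)$ has dimension at most $d-1$. I would first record why $\phi_A$ is well defined: if $J = \fm_1 \cap \cdots \cap \fm_r$ is a complete intersection of distinct smooth maximal ideals, then a surjection $A^d \surj J$ witnesses $[J] \in H(A)$, so its image $\sum_i [\fm_i]$ is zero in $E_0(A)$.

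For surjectivity it suffices to reach every generator $[J]$ of $E_0(A)$, where $J$ is $\fm$-primary of height $d$ and a surjection $\omega_J : (A/J)^d \surj J/J^2$ is given. Lifting $\omega_J$ and applying a Bertini-type argument over the algebraically closed field $k$ (in the spirit of \cite[Lemma~2.1]{BS-1} and \cite[\S~1]{Levine-2}), I would produce an ideal $J_1$ comaximal with $J$ that is a reduced intersection $\fm_1 \cap \cdots \cap \fm_r$ of distinct smooth maximal ideals and for which the lifted orientation yields a surjection $A^d \surj J \cap J_1$. Comparing primary decompositions gives $[J \cap J_1] = [J] + [J_1]$ in $G(A)$, while $[J \cap J_1] \in H(A)$; hence $[J] = -[J_1] = \phi_A\bigl(-\sum_i [\fm_i]\bigr)$ lies in the image.

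For injectivity, let $x \in \Ker \phi_A$ and write $x = \sum_i n_i [\fm_i]$ in $G_s(A)$. Using the relations defining $H_s(A)$ together with the existence, for each smooth $\fm$, of a reduced complete intersection of the shape $\fm \cap \fm'_1 \cap \cdots \cap \fm'_s$ (again supplied by general position, and giving $-[\fm] = \sum_j [\fm'_j]$ in $E_s(A)$), I would rewrite $x$, without altering its class in $E_s(A)$, as a single class $[J]$ with $J = \fn_1 \cap \cdots \cap \fn_t$ a reduced intersection of distinct smooth maximal ideals. Then $\phi_A(x) = [J] = 0$ in $E_0(A)$, and the triviality criterion for the weak Euler class group \cite[Theorem~4.2]{BS-3} forces $J$ to be generated by $d$ elements, that is, a complete intersection. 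By the definition of $H_s(A)$ this means $\sum_i [\fn_i] = 0$ in $E_s(A)$, so $x = 0$.

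The step I expect to be the main obstacle is the general-position input used in both directions: one must guarantee that the residual $0$-dimensional schemes cut out by the chosen generic lifts are reduced, lie in the smooth locus, are disjoint from the singular locus, and give honest complete intersections. These statements rely on $k$ being algebraically closed and on the singular locus having positive codimension, and the triviality criterion they feed into ultimately rests on Suslin's cancellation theorem \cite{Suslin} and \cite[Lemma~2.4]{Srid}, which — exactly as in the proof of \lemref{lem:ECG-SW} — hold for reduced affine algebras over an algebraically closed field of arbitrary characteristic.
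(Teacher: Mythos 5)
Your proposal is correct and follows essentially the same route as the paper: surjectivity via the Murthy--Swan Bertini theorem producing a residual ideal $J_1$ comaximal with $J$ that is a reduced intersection of distinct smooth maximal ideals with $J\cap J_1$ a complete intersection, and injectivity by moving the class to a single reduced product $[J]$ of smooth maximal ideals and then invoking the triviality criterion of \cite[Theorem~4.2]{BS-3} together with \lemref{lem:ECG-SW} to conclude $J$ is a complete intersection. The only cosmetic difference is that the paper cites \cite[Theorem~2.3]{Murthy} and \cite[Theorem~1.3]{Swan} directly for the general-position input you flag as the main obstacle, which supplies exactly the three properties of the residual ideal you need.
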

\begin{proof}
We prove the surjectivity of $\phi_A$ using the Bertini theorems of Murthy 
\cite[Theorem~2.3]{Murthy} and Swan \cite[Theorem~1.3]{Swan}.
Let $I$ be an ideal of $A$ of height $d$ such that
${I}/{I^2}$ is generated by $d$ elements.
Let $\{\fm_1, \cdots , \fm_r\}$ be a set of smooth maximal ideals of $A$.
A special case of the Murthy-Swan Bertini theorem says that there exists an 
ideal $J \subset A$ (called a residual ideal of $I$)
such that the following hold (see \cite[Remarks~2.8, 3.2]{Murthy}). 
\begin{enumerate}
\item
$IJ$ is generated by $d$ elements.
\item
$I + J = \fm_i + J = A$ for $1 \le i \le r$.
\item
$J$ is a product of distinct smooth maximal ideals of $A$ of height $d$.
\end{enumerate}

It follows from (1) and (2) that $[I] + [J] = [IJ] = 0$ in $E_0(A)$. 
In particular,  $[I] = - [J]$ and (3) shows that $[J] \in E_s(A)$. 
This shows that $\phi_A$ is surjective.

To show that $\phi_A$ is injective, let $\alpha \in E_s(A)$ be such that
$\phi_A(\alpha) = 0$. 
By repeatedly applying the above Bertini theorem again,
we can write $\alpha = [J]$, where $J$ is a product of distinct smooth maximal
ideals of height $d$ in $A$. 
%See Remark~4.14 of \cite{BS-1}
It follows from \cite[Theorem~4.2]{BS-3}
and \lemref{lem:ECG-SW} that $J$ is generated by $d$ elements.
On the other hand, $J$ is a product of distinct smooth maximal
ideals of height $d$ in $A$, and hence it is already a local complete 
intersection in $A$. We conclude that $J$ is a complete intersection in $A$
and hence $[J] = 0$ in $E_s(A)$.
\end{proof}

\vskip .3cm

The weak Euler class group $E_0(A)$ usually contains classes of ideals
$J \subset A$ which may be partly supported on the singular locus of
$\Spec(A)$. This presents an obstruction to the construction of a
direct homomorphism from $E_0(A)$ to the Chow group of $A$ or to $K_0(A)$. 
The modified group
$E_s(A)$ allows us to circumvent this obstruction as follows.

Let $A$ be a reduced affine algebra of dimension $d \ge 2$
over an algebraically closed field $k$. The assignment
$\fm \mapsto [A/{\fm}] \in \CH^d(A)$ induces a canonical group homomorphism
$G_s(A) \to \CH^d(A)$.
It follows from \cite[Lemma~2.2]{LW} that this assignment kills the classes of 
complete intersection ideals. In particular, it defines a
group homomorphism 
\begin{equation}\label{eqn:Euler-Chow*}
\theta_A \colon E_s(A) \to \CH^d(A). 
\end{equation}

The composite map $E_s(A) \xrightarrow{\theta_A} \CH^d(A) \xrightarrow{cyc_A}
K_0(A)$ (see ~\eqref{eqn:C-class}) takes $\fm$ to the class of the 
$A$-module $[A/{\fm}] \in K_0(A)$. If $I = \cap_i \fm_i$ is a regular ideal
of $A$, then the $A$-module $A/I$ has a class in $K_0(A)$.
Moreover, the Chinese remainder Theorem shows that
$[A/I] = \sum_i[A/{\fm_i}]$. It follows that $cyc_A \circ \theta_A([I])
= [A/I] \in K_0(A)$. We let $\gamma_A\colon E_s(A) \to K_0(A)$ denote the
composite map $cyc_A \circ \theta_A$.

The following result is a special case of \cite[Theorem~7.5]{GK}. We reproduce
the proof for the sake of completeness of our argument of the main
results of this section.
This result is classical for smooth
schemes (see \cite[\S~4.3]{Grothendieck}) and a proof of the general case is 
given in an old unpublished 
manuscript \cite{Levine-S} of Levine. 
The dimension two case of Levine's proof is available in
\cite{BS-*}. This result is used in \cite{Levine-2} to prove 
the prime-to-characteristic part of Murthy's conjecture. 
It is also used in \cite{Srinivas} and \cite{KSri}
to prove Murthy's conjecture for normal affine varieties.

\begin{thm}$($\cite[Theorem~7.5]{GK}$)$\label{thm:RR}
Let $A$ be a reduced affine algebra of dimension $d \ge 2$
over an algebraically closed field $k$.
Then the kernel of the cycle class map $cyc_X\colon \CH^d(A) \to K_0(A)$ is a
torsion group of exponent $(d-1)!$.
\end{thm}
\begin{proof}
Since the map $\theta_A\colon E_s(A) \to \CH^d(A)$ is evidently surjective, it
suffices to show that ${\rm Ker}(\gamma_A)$ is torsion of exponent $(d-1)!$.
So let $\alpha \in E_s(A)$ be such that $\gamma_A(\alpha) = 0$. By repeatedly
applying the Murthy-Swan Bertini theorem as in the proof of
\lemref{lem:MS-Bertini}, we can assume that $\alpha = [I]$, where $I \subset A$
is a regular ideal of height $d$. Our assumption then says that
$[A/I] = \gamma_A([I]) = 0$. Since $I$ is supported on the Cohen-Macaulay
(in fact, on the regular) locus of $A$, \cite[Lemma~1.2]{Mandal}
shows precisely that there is an $A$-regular sequence $\{f_1, \ldots , f_d\}$
in $I$ such that $I = (f_1, \ldots , f_d) + I^2$. 

If we let $J = (f_1, \ldots ,f_{d-1}) + I^{(d-1)!}$, then it is elementary
to check that $\sqrt{J} = \sqrt{I}$ and
$J = (f_1, \ldots , f_{d-1}, f^{(d-1)!}_d) + J^2$. 
In particular, $J$ has a class $[J] \in E_0(A)$.
Moreover, we have $[J] = (d-1)![I] \in E_0(A)$ by \cite[Lemma~4.1]{Das-Mandal}.
If we can show that $[J] = 0$ in $E_0(A)$, it will follow that
$(d-1)! [I] = 0$ in $E_0(A)$. We can then conclude from 
\lemref{lem:MS-Bertini} that $(d-1)![I] = 0$ is $E_s(A)$. 
We therefore need to show that $[J] = 0$ in $E_0(A)$.

Now, given our choice of $I$ and $J$, \cite[Theorem~2.2]{Murthy} says 
that there exists a projective $A$-module $P$ of rank $d$ such that
$P \surj J$ and $[P] - [A^d] = -[A/I] = - \gamma_A([I])$ in $K_0(A)$. 
As $\gamma_A([I]) = 0$, this implies that 
$P$ is stably free. Since $k$ is algebraically closed,
we can now apply Suslin's cancellation theorem \cite[Theorem~6]{Suslin}
to conclude that $P$ is free. In particular, $J$ is generated by
$d$ elements and so its class $[J]$ dies in $E_0(A)$, as desired.
We also note that \cite[Theorem~2.2]{Murthy} gives another
proof that $J/{J^2}$ is generated by $d$ elements.  
\end{proof}

\subsection{Applications}\label{sec:Appl-Euler-class}
We can now prove Murthy's conjecture and give other applications of
\thmref{thm:Tor-High-D}.

\begin{cor}\label{cor:Murthy*}
Let $A$ be a reduced affine algebra of dimension $d \ge 2$ over
an algebraically closed field $k$. 
Then the cycle class map $cyc_A\colon \CH^d(A) \to F^dK_0(A)$ is an isomorphism.
In particular, $F^dK_0(A)$ is uniquely divisible.
\end{cor}
\begin{proof}
The map $cyc_A$ is surjective by definition, and its injectivity is 
a straightforward consequence of Theorems~\ref{thm:Tor-High-D} and
~\ref{thm:RR}.
\end{proof}

The following is another application of \thmref{thm:Tor-High-D}. 
When the ring $A$ is smooth,
this was earlier proven by Bhatwadekar-R. Sridharan  
\cite[Corollary~4.15]{BS-1}.

\begin{thm}\label{thm:EGG-Chow}
Let $A$ be a reduced affine algebra of dimension $d \ge 2$
over an algebraically closed field $k$. Then the assignment
$\fm \mapsto [A/{\fm}] \in \CH^d(A)$ on the set of smooth maximal ideals
induces a canonical isomorphism
\[
\theta_A: E(A) \xrightarrow{\simeq} \CH^d(A).
\]
\end{thm}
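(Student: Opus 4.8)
The plan is to prove the theorem in the equivalent form $E_s(A) \cong \CH^d(A)$. Combining \lemref{lem:ECG-SW} and \lemref{lem:MS-Bertini} one has canonical isomorphisms $E(A) \cong E_0(A) \cong E_s(A)$, so it suffices to show that the map $\theta_A \colon E_s(A) \to \CH^d(A)$ sending a smooth maximal ideal $\fm$ to its cycle class $[A/\fm]$ is an isomorphism. Writing $X = \Spec(A)$ and using $k = \ov k$, the free abelian group $G_s(A)$ on smooth maximal ideals is canonically the group $\sZ^d(X)$ of $0$-cycles supported on the regular locus, and under this identification $\CH^d(A) = \sZ^d(X)/\sR^d(X) = G_s(A)/\sR^d(X)$ while $E_s(A) = G_s(A)/H_s(A)$. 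Thus $\theta_A$ is induced by the identity on $G_s(A)$, and the entire theorem reduces to the equality of subgroups $H_s(A) = \sR^d(X)$ inside $G_s(A)$.

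First I would establish $H_s(A) \subseteq \sR^d(X)$, which yields both well-definedness of $\theta_A$ and, together with the immediate surjectivity (the classes $[A/\fm]$ generate $\CH^d(A)$), shows that $\theta_A$ is a well-defined surjection. Let $J = \fm_1 \cap \cdots \cap \fm_r$ be a product of distinct smooth maximal ideals that is a complete intersection, say $J = (a_1, \ldots, a_d)$. Using the Bertini-type theorems for singular schemes (\cite[\S 1]{Levine-2}) I would arrange that $C := V(a_1, \ldots, a_{d-1})$ is a reduced Cartier curve on $X$ relative to $X_{\rm sing}$; since $J$ is radical with support disjoint from $X_{\rm sing}$, the function $a_d$ is invertible on $C$ along $C \cap X_{\rm sing}$ and satisfies $\divf(a_d|_C) = \sum_i [\fm_i]$. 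Hence $\sum_i [\fm_i] = (a_d)_C \in \sR^d(X)$, so every generator $[J]$ of $H_s(A)$ lies in $\sR^d(X)$.

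The main work, and the expected obstacle, is the reverse inclusion $\sR^d(X) \subseteq H_s(A)$, i.e. the injectivity of $\theta_A$. Let $\alpha \in E_s(A)$ with $\theta_A(\alpha) = 0$. Repeating the residual-ideal argument from the proof of \lemref{lem:MS-Bertini} (Murthy's and Swan's Bertini theorems), I may assume $\alpha = [J]$ for a single reduced ideal $J = \fm_1 \cap \cdots \cap \fm_r$ that is a product of distinct smooth maximal ideals; in particular $J$ is a local complete intersection of height $d$. The hypothesis $\theta_A(\alpha) = 0$ says precisely that $[A/J] = \sum_i [A/\fm_i] = 0$ in $\CH^d(A)$. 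By \corref{cor:Murthy*} the cycle class map $cyc_A \colon \CH^d(A) \xrightarrow{\simeq} F^dK_0(A)$ is an isomorphism, so $[A/J] = 0$ in $F^dK_0(A)$ as well. At this point I would invoke Murthy's complete-intersection criterion (the ideal-theoretic form of \cite[Theorem~3.7]{Murthy}, available because $k$ is algebraically closed): a reduced local complete intersection ideal of height $d$ whose class vanishes in $F^dK_0(A)$ is a complete intersection. Thus $J$ is generated by $d$ elements, and then \cite[Theorem~4.2]{BS-3} (exactly as used in \lemref{lem:MS-Bertini}) gives $[J] = 0$ in $E_s(A)$, so $\alpha = 0$.

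The delicate points are all concentrated in this last inclusion: the reduction to a single reduced ideal must be performed compatibly with the class in $E_s(A)$, and the passage from ``the $0$-cycle $\sum_i [\fm_i]$ is rationally trivial'' to ``$J$ is a complete intersection'' is the deep input that fails over non-closed fields. It is here that the algebraic closedness of $k$ and Murthy's theory are indispensable; once this bridge is in place, the torsion-freeness packaged in \corref{cor:Murthy*} together with the unique divisibility of \thmref{thm:Tor-High-D} guarantees that no spurious torsion obstructs the identification $H_s(A) = \sR^d(X)$, completing the proof that $\theta_A$ is an isomorphism.
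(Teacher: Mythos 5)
Your proposal is correct and follows essentially the same route as the paper: reduce to $E_s(A)$ via Lemmas~\ref{lem:ECG-SW} and~\ref{lem:MS-Bertini}, get well-definedness from the vanishing of complete-intersection classes in $\CH^d(A)$ (the paper simply cites \cite[Lemma~2.2]{LW} where you sketch the Cartier-curve argument), and prove injectivity by the Murthy--Swan Bertini reduction to a reduced product of smooth maximal ideals followed by the torsion-freeness of $F^dK_0(A)$ from Corollary~\ref{cor:Murthy*} and Murthy's complete-intersection criterion (the paper cites \cite[Corollary~3.4]{Murthy}; your appeal to \cite[Theorem~4.2]{BS-3} at the very end is redundant, since once $J$ is a complete intersection $[J]=0$ in $E_s(A)$ by definition of $H_s(A)$).
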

\begin{proof}
In view of Lemmas~\ref{lem:ECG-SW} and ~\ref{lem:MS-Bertini}, it suffices
to show that the map $\theta_A\colon E_s(A) \to \CH^d(A)$ is an isomorphism.
%To do this, we can assume that $X = \Spec(A)$ is connected.

The map $\theta_A$ is surjective by the definition of $\CH^d(A)$.
To prove its injectivity, let $\alpha \in E_s(A)$ be such that
$\phi_A(\alpha) = 0$. Using Murthy-Swan Bertini theorem as before, 
we can write $\alpha = [J]$, where $J$ is a 
product of distinct smooth maximal ideals of height $d$ in $A$. 
In particular, $J$ is a local complete intersection in $A$.
We have the maps $E_s(A) \xrightarrow{\theta_A} \CH^d(A) 
\xrightarrow{cyc_A} F^dK_0(A)$, where
the composite map takes $[J]$ to the class of $A/J$ in $F^dK_0(A)$
(see \cite[Corollary~2.7]{Murthy}).
Then $\theta_A([J]) = 0$ implies that the class $[A/J]$ is zero in 
$F^dK_0(A)$. Since $F^dK_0(A)$ is torsion-free by 
\corref{cor:Murthy*}, we can thus apply \cite[Corollary~3.4]{Murthy} to
conclude that $J$ is a complete intersection in $A$.
Equivalently, $[J] = 0$ in $E_s(A)$. This shows that $\theta_A$
is an isomorphism.
\end{proof}

\vskip .3cm

\subsection{Proof of \corref{cor:Intro-Res-5}}\label{sec:BBC*}
It follows from \thmref{thm:EGG-Chow} and \cite[Theorems~6.4.1, 6.4.2]{KSri-1}
that in both cases, one has $E_0(A) = 0$. 
Suppose now that $I \subset A$ is a local complete intersection ideal of
height $d$. Then ${I}/{I^2}$ is generated by $d$ elements as
an $A/I$-module. In particular, $I$ defines a class in $E_0(A)$, which is
zero. It follows from \cite[Theorem~4.2]{BS-3}
and \lemref{lem:ECG-SW} that $I$ is generated by $d$ elements and hence 
is a complete intersection. The last part of the corollary is obvious because
a product of smooth maximal ideals is a local complete intersection.
$\hfill \square$

%\vskip .3cm

\section{The strong Bloch-Srinivas conjecture}\label{sec:Pf:Chow-res}
We let $k$ be an algebraically closed field of
characteristic $p > 0$.
In this section, we prove a stronger form of the 
Bloch-Srinivas conjecture: \thmref{thm:Chowgroup}.
We shall then apply this to prove \corref{cor:BSC-0-1}.

\subsection{Proof of \thmref{thm:Chowgroup}}\label{sec:Pf-th-1.7}
Let $X$ be an affine or a projective variety over $k$ which has only
only isolated Cohen-Macaulay singularities.
Let $\pi\colon \wt{X} \to X$ be a resolution of singularities as in
\thmref{thm:Chowgroup} and let $E \subset \wt{X}$ be the reduced part of
the exceptional divisor.

We first show that the cycle class map (see ~\eqref{eqn:C-class})
$cyc_X\colon\CH^d(X) \surj F^dK_0(X)$ is an isomorphism.
In view of \corref{cor:Murthy*}, we can assume $X$ to be projective.
Since $\dim(X) \ge 2$ and it has isolated Cohen-Macaulay singularities,
$X$ must be normal. It follows from 
\cite[Chap.~II, Theorem~11]{Lang} that the map ${\rm Alb}(X) \to 
{\rm Alb}(\wt{X})$ is an isomorphism, where ${\rm Alb}(X)$ is the
Albanese variety of a normal projective variety in the sense of
Lang \cite[Chap.~III, \S~3]{Lang}. In particular, it follows from
\cite[Theorem~1.6]{KSri} that there is a commutative diagram
\[
\xymatrix@C.6pc{
\CH^d(X) \ar[r] \ar[d] & F^dK_0(X) \ar[d] \\
\CH^d(\wt{X}) \ar[r] & F^dK_0(\wt{X})}
\]
in which the left vertical arrow is an isomorphism on torsion.
Since the bottom horizontal arrow is known to be an isomorphism
(see \cite[Theorem~3.2]{Levine-2}),
it follows that the top horizontal arrow must be injective on the
torsion subgroup. On the other hand, \cite[Theorem~3.2]{Levine-2} also says
that the top horizontal arrow has torsion kernel.
We conclude that ${\rm Ker}(cyc_X) = 0$.

We now prove the second isomorphism of the theorem, assuming
$X$ is affine or projective over $k$.
Let $Y$ denote the singular locus of $X$ with the reduced induced closed
subscheme structure.
It follows from \cite[Lemma~3.1]{Krishna-3} that the map
$F^dK_0(X, Y) \to F^dK_0(X)$ is an isomorphism.
We thus have a commutative diagram
\begin{equation}\label{eqn:Chowgroup-0}
\xymatrix@C1pc{
F^dK_0(X, Y) \ar[r]^-{\simeq} \ar[d]_{\pi^*} & 
F^dK_0(X) \ar[d]^{\pi^*} \ar@{-->}[dl] \\
F^dK_0(\wt{X}, E) \ar[r] & F^dK_0(\wt{X}).}
\end{equation}

In order to complete the proof of the theorem,
we are therefore left with showing that the map
$\pi^*\colon F^dK_0(X, Y) \to F^dK_0(\wt{X}, E)$ is an isomorphism.
Since this is surjective by the definition of $F^dK_0(\wt{X}, E)$,
the main point is to show that this map is injective as well.

Let $F$ denote the kernel of the map $F^dK_0(X, Y) \to F^dK_0(\wt{X}, E)$
and consider the commutative diagram with exact top row:
\begin{equation}\label{eqn:Chowgroup-1}
\xymatrix@C.6pc{
0 \to F \ar[r] \ar[dr] & F^dK_0(X,Y) \ar[r] \ar[d]^{\pi^*} & 
F^dK_0(\wt{X}, E) \ar[r] \ar[d]^{\iota_*} & 0 \\
& F^dK_0(\wt{X}) \ar@{=}[r] & F^dK_0(\wt{X}).}
\end{equation}

Since $X$ is quasi-projective over $k$, we can apply \cite[Lemma~1.4]{Levine-5} 
to find a map $X' \xrightarrow{\pi'} \wt{X} \xrightarrow{\pi} X$ such that
$X'$ is the blow-up of $X$ along a closed subscheme $Z \subset X$ with
$Z_{\rm red} = Y$. Note that most of the results of {\sl loc. cit.}
are valid only in characteristic zero, but the above cited result
is characteristic-free.  Set $E' = \pi'^{-1}(E)$
with the reduced structure.

It follows from \propref{prop:pro-desc-CM-A} that the kernel of the 
composite map $F^dK_0(X, Y) \to F^dK_0(\wt{X}, E) \to F^dK_0(X', E')$
is a $p$-primary torsion group of bounded exponent. 
Since $F$ lies in this kernel, we conclude from ~\eqref{eqn:Chowgroup-1}
that $F$ is a subgroup of ${\rm Ker}(\pi^*)$ and is $p$-primary torsion 
of bounded exponent.

On the other hand, if $X$ is affine, it follows from 
\cite[Corollary~1.7]{KSri} and the isomorphisms 
$\CH^d(X) \simeq F^dK_0(X) \simeq F^dK_0(X,Y)$ (shown above) that $F^dK_0(X,Y)$
is torsion-free. If $X$ is projective, these isomorphisms,
combined with the isomorphism $\CH^d(\wt{X}) \simeq F^dK_0(\wt{X})$
(see \cite[Theorem~3.2]{Levine-2}) and \cite[Theorem~1.6]{KSri},
imply that ${\rm Ker}(\pi^*)$ is torsion-free. We must therefore have $F = 0$.
This finishes the proof.
$\hfill \square$

\subsection{Proof of \corref{cor:BSC-0-1}}\label{sec:SBC-**}
Set $X = \Spec(A)$ and
let $\pi\colon \wt{X} \to X$ denote the blow-up of $X$ at its vertex $P$.
It is easy to check that there is a projection map $p\colon \wt{X} \to Z$ 
which is an affine bundle of rank one. 
Moreover, the 0-section of this affine bundle $\iota\colon Z \inj \wt{X}$
is also the exceptional divisor for $\pi$.
It particular, $\pi$ is a good resolution of singularities of $X$ with
reduced exceptional divisor $Z$.

The homotopy invariance implies that the 
pull-back map $\iota^*\colon K(\wt{X}) \to K(Z)$ is a homotopy 
equivalence of spectra. Equivalently, $K(\wt{X}, Z)$ is contractible.
In particular,  $F^dK_0(\wt{X}, Z) =0$.
It follows from \thmref{thm:Chowgroup} that $\CH^d(X) = 0$.
The last part of the corollary follows from the vanishing of
$\CH^d(X)$ and \cite[Theorem~3.7]{Murthy}.
$\hfill \square$

\bigskip

\noindent\emph{Acknowledgments.}
The author would like to thank M. K. Das for help in locating 
a reference for the relation between the Euler and weak Euler class groups.
He would like to thank Marc Levine for encouragement and,
S. M. Bhatwadekar and Chuck Weibel for useful comments
on this paper. The author is grateful to the
referee for many comments and suggestions which greatly improved the 
exposition of this paper.


\begin{thebibliography}{99}

\bibitem{KL} A. Altman, S. Kleiman, {\sl Bertini theorems for hypersurface
sections containing a subscheme\/}, Comm. Alg., {\bf 7}, (1979), no. 8,
775--790. \

\bibitem{AM} M. Artin, B. Mazur, {\sl Etale homotopy\/}, Lecture notes in
Mathematics, Springer-Verlag, {\bf 100}, (1969). \

\bibitem{AMac} M. Atiyah, I. G. MacDonald, {\sl Introduction to Commutative
Algebra\/}, Addison-Wesley Publishing Company, (1969).  

\bibitem{BPW}  L. Barbieri-Viale, C. Pedrini, C. Weibel,
 {\sl Roitman's theorem for singular complex projective surfaces\/},
Duke Math. J., {\bf 84}, (1996), 155--190. \


\bibitem{Bass} H. Bass, {\sl Algebraic $K$-theory\/}, W. A. Benjamin, Inc.,
New York-Amsterdam, (1968). \


\bibitem{BS-1} S. M. Bhatwadekar, R. Sridharan, {\sl Projective generation of
curves in polynomial extensions of an affine domain and a question of Nori\/},
Invent. Math., {\bf 133}, (1998), 161--192.  \

\bibitem{BS-4} S. M. Bhatwadekar, R. Sridharan, {\sl Zero cycles and the
Euler class groups of smooth real affine varieties\/},
Invent. Math., {\bf 136}, (1999), 287--322. \

\bibitem{BS-3} S. M. Bhatwadekar, R. Sridharan, {\sl The Euler class group of
a Noetherian ring\/}, Comp. Math., {\bf 122}, (2000), 183--222. \

\bibitem{BS-2} S. M. Bhatwadekar, M. K. Das, S. Mandal, 
{\sl Projective modules over smooth real affine varieties\/}, 
Invent. Math., {\bf 166}, (2006), 151--184.  \


\bibitem{BK} F. Binda, A. Krishna, {\sl Zero cycles with modulus and zero 
cycles on singular varieties\/}, Comp. Math., {\bf 154}, (2018), 120--187. \


\bibitem{BS-*} J. Biswas, V. Srinivas, {\sl The Chow ring of a singular 
surface\/}, Proc. Indian Acad. Sci., {\bf 108}, (1998), 227--249. \


\bibitem{BS} J. Biswas, V. Srinivas, {\sl Roitman's theorem for singular
projective varieties\/}, Comp. Math., {\bf 119}, (1999), 213--237. \


\bibitem{Bloch-tor} S. Bloch, {\sl Torsion algebraic cycles and a theorem of
Roitman\/}, Comp. Math., {\bf 39}, (1979), 107--127. \

\bibitem{Bloch-1} S. Bloch, {\sl Lectures on algebraic cycles\/},
Duke Univ. Math. Ser., Duke Univ. Press, {\bf 4}, (1976). \

\bibitem{Bl-2} S. Bloch, {\sl The moving lemma for higher Chow groups\/}, 
J. Alg. Geom., {\bf 3}, (1994), no. 3, 537--568. \

%\bibitem{Bloch-M} S. Bloch, {\sl Algebraic Cycles and Higher K-theory\/},
%Adv. Math., {\bf 61}, (1986), 267-304. \

%\bibitem{CoombesS}  K. Coombes, V. Srinivas, {\sl Relative $K$-theory and 
%vector bundles on certain singular varieties\/},  Invent. Math.,  {\bf 70}  
%(1982/83), no. 1, 1--12. \

\bibitem{Brion} M. Brion, {\sl On extensions of algebraic groups with finite 
quotient\/}, Pacific J. Math., {\bf 279}, (2015), no. 1-2, 135--153. \

\bibitem{CHW-1} G. Corti{\~n}as, C. Haesemeyer, M. Walker, C. Weibel, 
{\sl The $K$-theory of toric varieties\/}, Transactions of the AMS, {\bf 361}, 
(2009), 3325--3341. \

\bibitem{CHW-2} G. Corti{\~n}as, C. Haesemeyer, M. Walker, C. Weibel, 
{\sl The $K$-theory of toric varieties in positive characteristic\/},
Journal of Topology, {\bf 7}, (2014), 247--286. \

\bibitem{CHW-3} G. Corti{\~n}as, C. Haesemeyer, M. Walker, C. Weibel, 
{\sl Toric varieties, monoid schemes, and cdh descent\/}, J. reine angew. 
Math., {\bf 698}, (2015), 1--54. \ 

%\bibitem{CHW-4} G. Corti{\~n}as, C. Haesemeyer, M. Walker, C. Weibel, 
%{\sl The $K$-theory of toric schemes over regular rings of mixed 
%characteristic\/}, arXiv:1703.07881, (2017). \


\bibitem{VO-1} V. Cossart, O. Piltant, {\sl Resolution of singularities of 
threefolds in positive characteristic. I. Reduction to local uniformization on 
Artin-Schreier and purely inseparable coverings\/}, J. Algebra, {\bf 320},
(2008), no. 3, 1051--1082. \

\bibitem{VO-2} V. Cossart, O. Piltant, {\sl Resolution of singularities of 
threefolds in positive characteristic. II\/}, J. Algebra, {\bf 321}, 
(2009), no. 7, 1836--1976. \  


\bibitem{Das} M. K. Das, {\sl The Euler class group of a polynomial algebra 
II\/}, J. Algebra, {\bf 299}, (2006), 94--114. \

\bibitem{Das-Mandal} M. K. Das, S. Mandal, {\em A Riemann-Roch theorem\/}, 
J. Algebra {\bf 301},  (2006), 148--164. \

\bibitem{Eisenbud} D. Eisenbud, {\sl Commutative Algebra with a view 
toward algebraic geometry\/}, Grad. Texts Math., {\bf 150}, 
Springer-Verlag, (1994). \

\bibitem{ESV} H. Esnault, V. Srinivas, E. Viehweg, {\sl The universal
regular quotient of the Chow group of points on projective varieties\/},
Invent. math., {\bf 135}, (1999), 595--664. \


\bibitem {Fulton} W. Fulton, {\sl Intersection theory\/}, Second Edition, 
Ergebnisse der Mathematik und ihrer Grenzgebiete 3,
Folge. A Series of Modern Surveys in Mathematics, 
{\bf 2}, Springer-Verlag, Berlin, 1998. \

\bibitem{GL} T. Geisser, M. Levine, {\sl The $K$-theory of fields in 
characteristic $p$\/}, Invent. Math.,  {\bf 139}, (2000), no. 3, 459--493. \ 


\bibitem{GH-2} T. Geisser, L. Hesselholt, {\sl Bi-relative algebraic 
$K$ -theory and topological cyclic homology\/}, Invent. Math., {\bf 166}, 
(2006), 359--395. \ 

\bibitem{GH-1} T. Geisser, L. Hesselholt, {\sl On relative and bi-relative 
algebraic $K$-theory of rings of finite characteristic\/},
J. Amer. Math. Soc., {\bf 24}, (2011), no. 1, 29--49. 


\bibitem{GR} S. Geller, L. Roberts, {\sl The relationship between the Picard 
groups and $SK_1$ of some algebraic curves\/}, J. Alg., {\bf 55}, (1978), 
213--230. \

\bibitem{GW} S. Geller, C. Weibel, {\sl $K_1(A,B,I)$\/},
J. Reine Angew. Math., {\bf 342}, (1983), 12--34. \

\bibitem{Goodman} J.-E. Goodman, {\sl Affine open subsets of algebraic 
varieties and ample divisors\/}, Ann. Math., {\bf 89}, (1969), 160--183. \

\bibitem{Grothendieck} A. Grothendieck, {\sl  La th{\'e}orie des classes de 
Chern\/}, Bulletin de la Soci{\'e}t{\'e} Mathématique de France, 
{\bf 86}, (1958), 137--154. \ 

\bibitem{Gub-7} J. Gubeladze, {\sl Unimodular rows over monoid rings\/},
arXiv:1706.04364v1, (2017). \



\bibitem{GK} R. Gupta, A. Krishna, {\sl $K$-theory and 0-cycles on schemes\/}, 
arXiv:1803.00837, (2018). \


\bibitem{Hartshorne-1} R. Hartshorne, {\sl Ample subvarieties of algebraic 
varieties\/}, Lecture Notes in Mathematics, {\bf 156}, Springer, (1970). \

\bibitem{Hartshorne-2} R. Hartshorne, {\sl Algebraic Geometry\/}, 
Grad. Texts Math., {\bf 52}, Springer, (1997). \


\bibitem{Isaksen} D. Isaksen, {\sl Calculating limits and colimits in
pro-categories\/}, Fund. Math., {\bf 175}, no. 2, (2002), 175--194. \

\bibitem{Isak-1} D. Isaksen, {\sl Model structures on pro-categories\/},
Homology Homotopy Appl., {\bf 9}, (2007), no. 1, 367--398. \ 

\bibitem{Kato} K. Kato, {\sl Milnor $K$-theory and Chow group of zero 
cycles\/}, Applications of Algebraic $K$-Theory to Algebraic Geometry and
Number Theory, Contemp. Math., {\bf 55}, Amer. Math. Soc., Providence, RI,
(1986), 241--253. \  

\bibitem{KS} M. Kerz, S. Saito, {\sl Chow group of 0-cycles with modulus
and higher dimensional class field theory\/}, Duke Math. J., {\bf 165},
(2016), 2811--2897. \

\bibitem{Krishna-3} A. Krishna, {\sl Zero cycles on a threefold with isolated 
singularities\/},  J. Reine Angew. Math.,  {\bf 594},  (2006), 93--115. \ 

\bibitem{Krishna-1} A. Krishna, {\sl Zero cycles on singular surfaces\/},
J. $K$-Theory, {\bf 4}, (2009), no. 1, 101-143. \ 

\bibitem{Krishna-4} A. Krishna, {\sl An Artin-Rees theorem in $K$-theory and 
applications to zero cycles\/}, J. Alg. Geom., {\bf 19}, (2010), 
555--598. \ 

\bibitem{Krishna-2} A. Krishna, {\sl On 0-cycles with modulus\/},
Algebra \& Number Theory, {\bf 9}, (2015), no. 10, 2397--2415. \

\bibitem{Krishna-5} A. Krishna, {\sl Torsion in the 0-cycle group with 
modulus\/}, Algebra \& Number Theory, {\bf 12}, (2018), 1431--1469. \

\bibitem{KP} A. Krishna, P. Pelaez, {\sl Motivic spectral sequence for
relative homotopy $K$-theory\/}, Ann. Sc. Norm. Super. Pisa Cl.
Sci. (to appear), arXiv:1801.00922, (2019). \

\bibitem{K-Sarwar-1} A. Krishna, H. P. Sarwar, 
{\sl $K$-theory of monoid algebras and a 
question of Gubeladze\/}, J. Inst. Math. Jussieu,  (2017), 1--35. \

\bibitem{K-Sarwar} A. Krishna, H. P. Sarwar, {\sl Negative $K$-theory and
Chow group of monoid algebras\/}, arXiv:1901.03080, (2019). \


\bibitem{KSri} A. Krishna, V. Srinivas, {\sl Zero cycles and $K$-theory
on normal surfaces\/}, Ann. Math., {\bf 156}, no. 2, (2002),
155--195. \

\bibitem{KSri-1} A. Krishna, V. Srinivas, {\sl Zero cycles on singular
varieties\/}, in ``Algebraic Cycles and Motives I'',
London Mathematical Society Lecture note ser., 
Cambridge University Press, {\bf 343}, (2007),
264--277. \


%\bibitem{Landsburg} S. Landsburg, {\sl Relative Chow groups\/},
%Illinois J. Math., {\bf 35}, no. 4, (1991), 618--641. \

\bibitem{Lang}  S. Lang,  {\sl Abelian Varieties}, Interscience Tracts in
Pure and Applied Mathematics, Interscience Publishers,  {\bf 7}, (1959).


\bibitem{Levine-1}  M. Levine,  {\it Bloch's formula for singular surfaces\/},
Topology, {\bf 24}, (1982), 165--174.

\bibitem{Levine-S} M. Levine, {\sl A geometric theory of the Chow ring of a 
singular variety\/}, Unpublished manuscript, (1983). \

%\bibitem{Levine-3} M. Levine, {\sl The Chow ring of a singular variety\/},
%Rend. Sem. Mat. Univ. Politec., Torino, {\bf 42}, (1984), no. 3, 1--14. \
\bibitem{Levine-3} M. Levine, {\sl Zero-cycles on Singular Varieties\/},
Amer. J. Math., {\bf 107}, (1985), 737--757. \

\bibitem{Levine-2} M. Levine, {\sl Zero-cycles and $K$-theory on 
singular varieties\/}, Proc. Symp. Pure Math., {\bf 46},
Amer. Math. Soc., Providence, (1987), 451--462. \

\bibitem{Levine-5} M. Levine, {\sl Deligne-Beilinson cohomology for singular 
varieties\/}, Algebraic $K$-theory, commutative algebra, and algebraic
geometry (Santa Margherita Ligure, 1989), Contemp. Math., {\bf 126}, 
Amer. Math. Soc., Providence, RI, (1992), 113--146. \

\bibitem{LW} M. Levine, C. Weibel, {\sl Zero cycles and complete intersections
on singular varieties\/}, J. Reine Angew. Math., 
\textbf{359}, (1985), 106--120. \

\bibitem{Lipman} J. Lipman, {\sl Desingularization of two-dimensional
schemes\/}, Ann. Math., {\bf 107}, (1978), 151--207. \

\bibitem{Mandal}  S. Mandal, {\em  Complete intersection $K$-theory and 
Chern classes}, Math. Z. {\bf 227}, (1998), 423--454. \


\bibitem{Matsumura} H. Matsumura, {\sl Commutative ring theory\/},
Cambridge Studies in Advanced Math., {\bf 8}, Cambridge Univ. Press,
Cambridge, (1982). \ 

\bibitem{Milne} J. Milne, {\sl Zero cycles on algebraic varieties in
nonzero characteristic:Roitman's theorem\/}, Comp. Math., {\bf 47}, 
(1982), Fasc. 3, 271--287. \


\bibitem{Morrow} M. Morrow, {\sl Pro-cdh-descent for cyclic homology and
$K$-theory\/},  J. Inst. Math. Jussieu, {\bf 15}, (2016), 539--567. \ 

\bibitem{Morrow-2} M. Morrow, {\sl Pro-unitality and pro-excision in 
algebraic $K$-theory and cyclic homology\/}, 
J. Reine Angew. Math., {\bf 736}, (2018), 95--139. \

\bibitem{Morrow-1} M. Morrow, {\sl Zero cycles on singular varieties and 
their desingularizations\/}, Doc. Math., extra volume, (2015), 465--486. \ 

\bibitem{Murthy} M. P. Murthy, {\sl Zero cycles and projective modules\/},
Ann. Math., {\bf 140}, (1994), 405--434. \

\bibitem{MS} M. P. Murthy, R. Swan, {\sl Vector bundles over affine surfaces\/},
Invent. Math., {\bf 36}, (1976), 125--165. \ 

\bibitem{PW2}  C. Pedrini, C. A. Weibel,  {\it $K$-theory and Chow
groups on Singular Varieties\/}, Contemp. Math., {\bf 55}, (1986),
339--370. \

\bibitem{PW}  C. Pedrini, C. A. Weibel,   {\it Divisibility in the
Chow group of zero-cycles on a singular surface},
Asterisque {\bf 226} (1994) 371--409. \


\bibitem{Roitman} A. Roitman, {\sl The torsion in the group of 0-cycles
modulo rational equivalence\/}, Ann. Math., {\bf 111}, (1980), 553--569. \


\bibitem{Sef} J. \u{S}afarevich, {\sl Lectures on minimal models and birational 
transformations of two-dimensional scheme\/}, Tata Institute, Mumbai, (1966),
available at www.math.tifr.res.in/~publ/in/tifr37.pdf. \



\bibitem{Schl} M. Schlichting, {\sl Euler class groups, and the homology of
elementary and special linear groups\/}, Adv. Math., {\bf 320}, 
(2107), 1--81. \


\bibitem{Serre} J.-P. Serre, {\sl Modules projectifs et espaces fibr{\'e}s 
a fibre vectorielle\/}, In S{\'e}minaire P. Dubreil, M.-L. Dubreil-
Jacotin et C. Pisot, 1957/58, Fasc. {\bf 2}, Expos{\'e} 23, page 18,
Secr{\'e}tariat math{\'e}matique, Paris, (1958). \



\bibitem{Srid} R. Sridharan, {\sl Projective modules and complete
intersections\/}, $K$-Theory, {\bf 13}, (1998), 269--278. \

\bibitem{Srinivas-1}  V. Srinivas,  {\sl Zero cycles on a singular 
surface II\/}, J. Reine angew. Math., {\bf 362}, (1985), 4--27. \


%\bibitem{Srinivas-2} V. Srinivas, {\sl Rational equivalence of 0-cycles
%on normal varieties over $\C$\/}, Proc. Symp. Pure Math., {\bf 46},
%Amer. Math. Soc., Providence, (1987), 475--482. \



\bibitem{Srinivas} V. Srinivas, {\sl Torsion zero-cycles on affine 
varieties in characteristic $p$\/}, J. Alg., {\bf 120}, (1989), 428--432. \
 
%\bibitem{Srinivas-2} V. Srinivas, {\sl Algebraic $K$-theory\/},
%Second edition, Progress in Mathematics series, {\bf 90},
%Birkhauser publication, (1991). \
\bibitem{SP} Stacks Project authors, {\sl Stacks Project\/}, 
(2017), http://www.stacks.math.columbia.edu. \

\bibitem{Suslin} A. Suslin, {\sl A cancellation theorem for projective modules 
over algebras\/}, Soviet Math. Dokl.,  {\bf 18}, (1977), 1281--1284. \


\bibitem{Swan} R. G. Swan, {\sl A cancellation theorem for projective
modules in the metastable range\/}, Invent. Math., {\bf 27}, (1974),
23--43. \ 

\bibitem{Thomason-1} R. Thomason, {\sl Les K-groupes d'un sch\'ema \'eclat\'e 
et une formule d'intersection exc\'edentaire\/}, Invent. Math., {\bf 112}, 
(1993), 195--215.\
 
\bibitem{TT} R. Thomason, T. Trobaugh,  {\sl  Higher algebraic $K$-theory of 
schemes\/}, Grothendieck Festschrift III, Progr. Math., {\bf 88}, 
Birkh{\"a}user Boston, Boston, MA, 1990, 247--435. \



\bibitem{Tits} J. Tits,  {\sl Lectures on algebraic groups\/}, Yale University,
1966--1967.   \


\bibitem{Weibel-1} C. Weibel, {\sl The negative $K$-theory of normal 
surfaces\/}, Duke Math. J., {\bf 108}, no. 1, (2001), 1--35. \


\end{thebibliography}
\end{document}